\def\RCS$#1: #2 ${\expandafter\def\csname RCS#1\endcsname{#2}}
\newcommand{\Cla}{C^{\rm la}}
\newcommand{\Dla}{D^{\rm la}}
\newcommand{\Can}{C^{\rm an}}
\newcommand{\HH}{\mathcal{H}}
\newcommand{\into}{\hookrightarrow}
\newcommand{\hotimes}[1]{\, \mathop{\widehat\otimes}_{#1}\, }
\newcommand{\p}{\mathfrak{p}}
\DeclareMathOperator{\End}{End}
\DeclareMathOperator{\Ord}{Ord}
\DeclareMathOperator{\Exp}{Exp}
\DeclareMathOperator{\Ind}{Ind}
\DeclareMathOperator{\Rep}{Rep}
\DeclareMathOperator{\Hom}{Hom}
\DeclareMathOperator{\Lie}{Lie}
\DeclareMathOperator{\Spec}{Spec}
\newtheorem{theorem}{Theorem}[section]
\newtheorem{lemma}[theorem]{Lemma}
\newtheorem{definition}[theorem]{Definition}
\newtheorem{proposition}[theorem]{Proposition}
\newtheorem{corollary}[theorem]{Corollary}
\newtheorem*{theorem*}{Theorem}
\newtheorem*{corollary*}{Corollary}
\theoremstyle{remark}
\newtheorem*{remark}{Remark}
\newtheorem*{remarks}{Remarks}
\numberwithin{equation}{section}
\title[Emerton's Jacquet functors]{Emerton's Jacquet functors for non-Borel parabolic subgroups}
\author{Richard Hill}
\address{Department of Mathematics \\ University College London \\ Gower Street \\ London WC1E 6BT, UK}
\email{rih@math.ucl.ac.uk}
\author{David Loeffler}
\address{Warwick Mathematics Institute \\ Zeeman Building \\ University of Warwick \\ Coventry CV4 7AL, UK}
\email{d.loeffler.01@cantab.net}
\thanks{The second author is grateful for the support of EPSRC Postdoctoral Fellowship EP/F04304X/2.}
\begin{document}

\begin{abstract}
This paper studies Emerton's Jacquet module functor for locally analytic representations of $p$-adic reductive groups, introduced in \cite{emerton-jacquet}. When $P$ is a parabolic subgroup whose Levi factor $M$ is not commutative, we show that passing to an isotypical subspace for the derived subgroup of $M$ gives rise to essentially admissible locally analytic representations of the torus $Z(M)$, which have a natural interpretation in terms of rigid geometry. We use this to extend the construction in of eigenvarieties in \cite{emerton-interpolation} by constructing eigenvarieties interpolating automorphic representations whose local components at $p$ are not necessarily principal series.
\end{abstract}

\maketitle

\section{Introduction}

\subsection{Background}

Let $\mathfrak{G}$ be a reductive group over a number field $F$. The automorphic representations of the group $\mathfrak{G}(\mathbb{A})$, where $\mathbb{A}$ is the ad\`ele ring of $F$, are central objects of study in number theory. In many cases, it is expected that the set $\Pi(\mathfrak{G})$ of automorphic representations contains a distinguished subset $\Pi(\mathfrak{G})^{\rm arith}$ of representations which are (in some sense) ``definable over $\overline{\mathbb{Q}}$''. The subject of this paper is the $p$-adic interpolation properties of these representations (and their associated Hecke eigenvalues). Following the pioneering work of Coleman and Coleman-Mazur \cite{C-CO,Cpadic,CMeigen} for the automorphic representations attached to modular forms with nonzero Hecke eigenvalue at $p$, it is expected that these Hecke eigenvalues should be parametrised by $p$-adic rigid spaces (eigenvarieties).

A very general construction of eigenvarieties is provided by the work of Emerton \cite{emerton-interpolation}, using the cohomology of arithmetic quotients of $\mathfrak{G}$. For any fixed open compact subgroup $K_f \subseteq \mathfrak{G}(\mathbb{A}_f)$ (where $\mathbb{A}_f$ is the finite ad\`eles of $F$), and $K_\infty^\circ$ the identity component of a maximal compact subgroup of $\mathfrak{G}(F \otimes \mathbb{R})$, the quotients $Y(K_f) = \mathfrak{G}(F) \backslash \mathfrak{G}(\mathbb{A}) / K_f K_\infty^\circ$ are real manifolds, equipped with natural local systems $\mathcal{V}_X$ for each algebraic representation $X$ of $\mathfrak{G}$. The cohomology groups $H^i(Y(K_f), \mathcal{V}_X)$ are finite-dimensional, and passing to the direct limit over $K_f$ gives an admissible smooth representation $H^i(\mathcal{V}_X)$ of $\mathfrak{G}(\mathbb{A}_f)$. Every irreducible subquotient of $H^i(\mathcal{V}_X)$ is the finite part of an automorphic representation (whose infinite part is determined by $X$); we say that the representations arising in this way are {\it cohomological} (in degree $i$).

Emerton's construction proceeds in two major steps. Fix a prime $\p$ above $p$ and an open compact subgroup $K^{(\p)} \subseteq \mathfrak{G}(\mathbb{A}_f^{(\p)})$ (a ``tame level''). Firstly, from the spaces $H^i(Y(K^{(\p)} K_{\p}), \mathcal{V}_X)$ for various open compact subgroups $K_\p \subseteq G = \mathfrak{G}(F_\p)$, Emerton constructs Banach space representations $\tilde H^i(K^{(\p)})$ of $G$. For any complete subfield $L$ of $F_\p$, the spaces $\tilde H^i(K^{(\p)})_{\rm la}$ of locally $L$-analytic vectors are locally $L$-analytic representations of $G$, and there are natural maps
\begin{equation}\label{eq:edgemap}
 H^i(\mathcal{V}_X)^{K^{(\p)}} \to \Hom_{\mathfrak{g}}(X', \tilde H^i(K^{(\p)})_{\rm la})
\end{equation}
where $\mathfrak{g} = \operatorname{Lie} G$. In many cases, these maps are known to be isomorphisms; if this holds, the automorphic representations which are cohomological in degree $i$ are exactly those which appear as subquotients of $\Hom_{\mathfrak{g}}(X', \tilde H^i(K^{(\p)})_{\rm la})$ for some $X$ and tame level $K^{(\p)}$.

The second step in the construction is to extract the desired information from the space $\tilde H^i(K^{(\p)})_{\rm la}$. This is carried out by applying the Jacquet module functor of \cite{emerton-jacquet}, for a Borel subgroup $B \subseteq G$. This then produces an essentially admissible locally analytic representation of the Levi factor $M$ of $B$, which is a torus. There is an anti-equivalence of categories between essentially admissible locally analytic representations of $M$ and coherent sheaves on the rigid-analytic space $\widehat M$ parametrising characters of $M$. The eigenvariety $E(i, K^{(\p)})$ is then constructed from this sheaf by passing to the relative spectrum of the unramified Hecke algebra $\HH^{\rm sph}$ of $K^{(\p)}$; points of this variety correspond to characters $(\kappa, \lambda) \in \widehat M \times \Spec \HH^{\rm sph}$ such that the $(M = \kappa, \HH^{\rm sph} = \lambda)$-eigenspace of $J_B(\tilde H^i(K^{(\p)})_{\rm la})$ is nonzero. Hence if the map \eqref{eq:edgemap} above is an isomorphism, there is a point of $E(i, K^{(\p)})$ for each automorphic representation $\pi = \bigotimes_v \pi_v$ which is cohomological in degree $i$ with $(\pi_f^{(\p)})^{K^{(\p)}} \otimes J_B(\pi_\p) \ne 0$. 

\subsection{Statement of the main result}

In this paper, we consider the situation where $B$ is replaced by a general parabolic subgroup $P$ of $G$. This extends the scope of the theory in two ways: firstly, it may happen that no Borel subgroup exists ($G$ may not be quasi-split); and even if a Borel subgroup exists, there will usually be automorphic representations for which $J_B(\pi_\p) = 0$, which do not appear in Emerton's eigenvariety. 

As above, we choose a number field $F$, a connected reductive group $\mathfrak{G}$ over $F$, and a prime $\p$ of $F$ above the rational prime $p$. Let $\mathcal{G} = \mathfrak{G} \times_F F_\p$, a reductive group over $F_\mathfrak{p}$, and $G = \mathcal{G}(F_\p)$. Let us choose a parabolic subgroup $\mathcal{P}$ of $\mathcal{G}$ (not necessarily arising from a parabolic subgroup of $\mathfrak{G}$), with unipotent radical $\mathcal{N}$; and let $\mathcal{M}$ be a Levi factor of $\mathcal{P}$, with centre $\mathcal{Z}$ and derived subgroup $\mathcal{D}$. We write $G = \mathcal{G}(F_\p)$, and similarly for $P, M, D, Z$. We choose a complete extension $L$ of $\mathbb{Q}_p$ contained in $F_\p$, so $G, P, M, D, Z$ are locally $L$-analytic groups. 

Let $\Gamma = D \times \mathfrak{G}(\mathbb{A}_f^\p) \times \pi_0$, where $\pi_0$ is the component group of $\mathfrak{G}(F \otimes \mathbb{R})$. Let us choose an open compact subgroup $U \subseteq \Gamma$ (this is the most natural notion of a ``tame level'' in this context), and a finite-dimensional irreducible algebraic representation $W$ of $\mathcal{M}$. As we will explain below, the Hecke algebra $\HH(\Gamma // U)$ can be written as a tensor product $\HH^{\rm ram} \otimes \HH^{\rm sph}$, where $\HH^{\rm sph}$ is commutative, and $\HH^{\rm ram}$ is finitely-generated (and supported at a finite set of places $S$).

\begin{theorem*}[Theorem \ref{thm:maintheorem}] There exists a rigid-analytic subvariety $\mathcal{E}(i, P, W, U)$ of $\widehat Z \times \Spec \HH^{\rm sph}$, endowed with a coherent sheaf $\overline{\mathcal{F}}(i, P, W, U)$ with a right action of $\HH^{\rm ram}$, such that:
\begin{enumerate}
\item The natural projection $\mathcal{E}(i, P, W, U) \to \mathfrak{z}'$ has discrete fibres. In particular, the dimension of $\mathcal{E}(i, P, W, U)$ is at most equal to the dimension of $Z$.
\item The point $(\chi, \lambda) \in \widehat Z \times \Spec \HH^{\rm sph}$ lies in $\mathcal{E}(i, P, W, U)$ if and only if the $(Z = \chi, \HH^{\rm sph} = \lambda)$-eigenspace of $\Hom_U\left(W, J_P(\tilde H^i)_{\rm la}\right)$ is nonzero. If this is so, the fibre of $\overline{\mathcal{F}}(i, P, W, U)$ at $(\chi, \lambda)$ is isomorphic as a right $\HH^{\rm ram}$-module to the dual of that eigenspace.
\item If there is a compact open subgroup $G_0 \subseteq G$ such that $(\tilde H^i_{\rm la})^{U^{(\p)}}$ is isomorphic as a $G_0$-representation to a finite direct sum of copies of $\Cla(G_0)$ (where $U^{(\p)} = U \cap \mathfrak{G}(\mathbb{A}_f^\p)$), then $\mathcal{E}(i, P, W, U)$ is equidimensional, of dimension equal to the rank of $Z$.
\end{enumerate}
\end{theorem*}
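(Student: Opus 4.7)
The plan is to imitate Emerton's construction of \cite{emerton-interpolation}, replacing his Jacquet module for $B$ by $J_P$ and using the $W$-isotypical decomposition to reduce from $M$-representations to $Z$-representations. First, I would apply $J_P$ to the admissible locally analytic $G$-representation $\tilde H^i(U^{(\p)})_{\rm la}$, producing a locally analytic representation of $M$ with a commuting action of the Hecke algebra $\HH(\Gamma // U) = \HH^{\rm ram} \otimes \HH^{\rm sph}$. Then I would form $\Hom_U(W, J_P(\tilde H^i)_{\rm la})$; the main technical input of the paper (the essentially admissible structure over $Z$ asserted in the abstract) tells us this is an essentially admissible locally analytic representation of $Z$ with commuting $\HH^{\rm ram}$- and $\HH^{\rm sph}$-actions. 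By Emerton's anti-equivalence between essentially admissible $Z$-representations and coherent sheaves on $\widehat Z$, taking the continuous dual produces a coherent sheaf $\mathcal{F}$ on $\widehat Z$ carrying these Hecke actions. Since $\HH^{\rm sph}$ is commutative, the image of $\mathcal{O}_{\widehat Z} \otimes \HH^{\rm sph}$ in $\End_{\mathcal{O}_{\widehat Z}}(\mathcal{F})$ is a coherent sheaf of commutative $\mathcal{O}_{\widehat Z}$-algebras, and I would define $\mathcal{E}(i, P, W, U)$ as its relative analytic spectrum, with $\overline{\mathcal{F}}(i, P, W, U)$ the natural coherent sheaf on $\mathcal{E}$ pushing forward to $\mathcal{F}$; the residual $\HH^{\rm ram}$-action descends to $\overline{\mathcal{F}}$.

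Property (2) is then essentially by construction: the fibre of $\overline{\mathcal{F}}$ at $(\chi, \lambda)$ is the $(\chi, \lambda)$-localization of $\mathcal{F}$, which under the duality of step one corresponds to the dual of the $(Z = \chi, \HH^{\rm sph} = \lambda)$-eigenspace of $\Hom_U(W, J_P(\tilde H^i)_{\rm la})$. For property (1), the key observation is that since $W$ is an irreducible algebraic representation of $\mathcal{M}$, the action of $U(\mathfrak{z})$ on $\Hom_U(W, J_P(\tilde H^i)_{\rm la})$ is constrained by the central character of $W$ (with an explicit shift coming from the modulus character / half-sum of roots in $\mathcal{N}$); hence the support of $\mathcal{F}$ lies in the preimage under the derivative map $\widehat Z \to \mathfrak{z}'$ of a single point, and this preimage is a countable discrete set, giving the claim.

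The main obstacle is property (3), the equidimensionality statement under the freeness hypothesis. Here I would follow the strategy Emerton uses in the Borel case: under the assumption $(\tilde H^i_{\rm la})^{U^{(\p)}} \cong \Cla(G_0)^n$ as $G_0$-representations, one can reduce the computation of $\Hom_U(W, J_P(-))$ to an explicit construction on $\Cla(G_0)$, and the resulting dual module should be locally free of finite rank over an admissible cover of $\widehat Z$ by affinoids. Equidimensionality of $\mathcal{E}$ would then follow from the fact that the support of a locally free coherent sheaf on $\widehat Z$ equipped with a commuting family of endomorphisms is equidimensional of dimension $\dim \widehat Z = \operatorname{rank} Z$. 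The technical difficulty lies in verifying that the composition of $J_P$ with the functor $\Hom_U(W, -)$ preserves the necessary orthonormalizability (or projectivity) property when passing from $\Cla(G_0)$-freeness to a statement about modules over affinoid subdomains of $\widehat Z$; this likely requires a careful analysis of the Bruhat-style decomposition of $\Cla(G_0)$ with respect to $P$ and an interchange-of-limits argument, analogous to the computations in \cite{emerton-jacquet} but carried out relative to a non-Borel parabolic.
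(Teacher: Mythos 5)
Your overall architecture --- form $\Hom_U(W, J_P(\tilde H^i)_{\rm la})$, invoke its essential admissibility as a $Z$-representation, dualise to a coherent sheaf $\mathcal{F}$ on $\widehat Z$, and take the relative spectrum of the $\mathcal{O}_{\widehat Z}$-algebra generated by $\HH^{\rm sph}$ inside $\operatorname{End}(\mathcal{F})$ --- is exactly the paper's construction, and your treatment of property (2) is the same (it is essentially formal once the sheaf exists). Your sketch of property (3) is also in the spirit of the paper, which proves it by untwisting $U_n \otimes_K W$ into $D(\overline{\mathbb{N}}^\circ_n, \overline{N}_0)^{r \dim W} \hotimes{K} A_n$ and then running Buzzard's eigenvariety machine on orthonormalizable Banach modules over an affinoid cover of $\widehat Z$; citing the essential admissibility as established elsewhere in the paper is legitimate.

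However, your argument for property (1) has a genuine gap. You assert that the support of $\mathcal{F}$ lies over a single point of $\mathfrak{z}'$ and that the preimage of a point under $\widehat Z \to \mathfrak{z}'$ is a countable discrete set; both assertions are false. First, $U$ is an open compact subgroup of $D$, so $\Hom_U(W,-)$ constrains only the $\mathfrak{d}$-action and not the $\mathfrak{z}$-action: $J_P(\tilde H^i_{\rm la})$ contains vectors of arbitrary $Z$-weight, and indeed by part (3) the image of $\mathcal{E}$ in $\mathfrak{z}'$ is typically of full dimension, not a point. Second, the fibre of the differentiation map $\widehat Z \to \mathfrak{z}'$ consists of all twists of a fixed character by smooth characters of $Z$; since $Z$ admits $\mathbb{Z}$ as a quotient (it contains strongly positive elements generating an unbounded subgroup), each such fibre contains a copy of $\mathbb{G}_m$ and is not discrete. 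Discreteness of the fibres of $\mathcal{E} \to \mathfrak{z}'$ is precisely the nontrivial Coleman-style finiteness statement: the paper proves it by fixing a character $\chi$ of the compact part $Z_0$, reducing to the finite-dimensionality of $\Can(Y_n)^\dag \hotimes{\Can(\widehat Z)} \bigl[(J_P(V)\otimes_K W')^{M_0}\bigr]'_b$ for each affinoid $Y_n \subseteq \widehat Z$, and deducing this from the fact that a strongly positive $z \in Z^+$ acts by an $A_n$-compact operator on $\tilde U_n = U_n \otimes_K W$. Establishing that compactness survives the twist by $W$ is the purpose of the paper's section on twisted distribution algebras (the finite morphism $\beta_n : A_n \to A_n \otimes_K \End W$ and the descent of compactness along finite morphisms); your proposal contains no substitute for this step, so property (1) is unproven as written.
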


Now let us suppose that $W$ is absolutely irreducible, and write $\Pi(i, P, W, U)$ for the set of irreducible smooth $\mathfrak{G}(\mathbb{A}_f) \times \pi_0$-representations $\pi_f$ such that $J_P(\pi_f)^U \ne 0$, and $\pi_f$ appears as a subquotient of the cohomology space $H^i(\mathcal{V}_{X})$ for some irreducible algebraic representation $X$ of $G$ such that $(X')^N \cong W \otimes \chi$ for a character $\chi$. To any such $\pi_f$, we may associate the point $(\theta \chi, \lambda) \in \widehat Z \times \Spec \HH^{\rm sph}$, where $\theta$ is the smooth character by which $Z$ acts on $J_P(\pi_p)$, and $\lambda$ the character by which $\HH^{\rm sph}$ acts on $J_P(\pi_f)^U$. Let $E(i, P, W, U)_{\rm cl}$ denote the set of points of $\widehat Z \times \Spec \HH^{\rm sph}$ obtained in this way from representations $\pi_f \in \Pi(i, P, W, U)$.

\begin{corollary*}[Corollary \ref{corr:maincorr}]
 If the map \eqref{eq:edgemap} is an isomorphism in degree $i$ for all irreducible algebraic representations $X$ such that $(X')^N$ is a twist of $W$, then $E(i, P, W, U)_{\rm cl} \subseteq \mathcal{E}(i, P, W, U)$. In particular, the Zariski closure of $E(i, P, W, U)_{\rm cl}$ has dimension at most $\dim Z$.
\end{corollary*}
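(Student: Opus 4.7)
The plan is to reduce to part~(2) of the main theorem: for each classical point $(\theta\chi,\lambda)$ coming from a $\pi_f\in\Pi(i,P,W,U)$, I aim to exhibit a nonzero vector in the $(Z=\theta\chi,\,\HH^{\rm sph}=\lambda)$-eigenspace of $\Hom_U(W,J_P(\tilde H^i)_{\rm la})$.

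First I would set $K^{(\p)}=U\cap\mathfrak{G}(\mathbb{A}_f^\p)$ and pick an irreducible algebraic representation $X$ with $(X')^N\cong W\otimes\chi$ such that $\pi_f$ is a subquotient of $H^i(\mathcal{V}_X)$. Taking $K^{(\p)}$-invariants and using the assumed isomorphism \eqref{eq:edgemap} in degree~$i$, $\pi_f^{K^{(\p)}}$ appears as a subquotient of $\Hom_{\mathfrak{g}}(X',\tilde H^i(K^{(\p)})_{\rm la})$, equivariantly for $G$, $\HH^{\rm sph}$ and $\HH^{\rm ram}$; in particular, the $\HH^{\rm sph}=\lambda$-component of this Hom space is nonzero.

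Next I would invoke Emerton's compatibility of $J_P$ with locally algebraic vectors: the tautological evaluation $\Hom_{\mathfrak{g}}(X',V)\otimes X'\to V$ has image in the $X$-locally algebraic part of $V=\tilde H^i(K^{(\p)})_{\rm la}$, and on locally algebraic representations of the form $\sigma\otimes X'$ one has a natural $M$-equivariant isomorphism $J_P(\sigma\otimes X')\cong J_P^{\rm sm}(\sigma)\otimes(X')^N=J_P^{\rm sm}(\sigma)\otimes W\otimes\chi$. Feeding a $U$-fixed $\HH^{\rm sph}=\lambda$-eigenvector of $J_P(\pi_f)$ into this isomorphism and composing produces a nonzero $U$-equivariant map $W\otimes\chi\to J_P(\tilde H^i(K^{(\p)})_{\rm la})$ on which $Z$ acts by the product of the smooth central character $\theta$ of $J_P(\pi_p)$ and the algebraic character $\chi$ of $(X')^N/W$, while $\HH^{\rm sph}$ acts by $\lambda$. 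Twisting $W$ internally by $\chi$ then identifies this with the required nonzero element of $\Hom_U(W,J_P(\tilde H^i)_{\rm la})$ in the $(Z=\theta\chi,\,\HH^{\rm sph}=\lambda)$-eigenspace.

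The main technical obstacle is that $\pi_f$ occurs only as a subquotient of $H^i(\mathcal{V}_X)$, so one cannot directly lift the eigenvector into $\Hom_{\mathfrak{g}}(X',\tilde H^i(K^{(\p)})_{\rm la})$. The remedy is to work with generalised eigenspaces under the commuting actions of $Z$ and $\HH^{\rm sph}$: since the $U$-invariants of each such eigenspace are finite-dimensional (from the essential admissibility underpinning part~(2) of the theorem), non-vanishing in a subquotient forces non-vanishing in the ambient space. Once this is established, part~(2) gives $(\theta\chi,\lambda)\in\mathcal{E}(i,P,W,U)$, and the bound on the dimension of the Zariski closure follows immediately from part~(1).
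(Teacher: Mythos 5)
Your overall route is the paper's: use the edge map to realise the relevant locally algebraic representation inside $\tilde H^i_{\rm la}$, apply Emerton's compatibility $J_P(\sigma\otimes X')\cong J_P^{\rm sm}(\sigma)\otimes (X')^N$, and twist by $\chi$ to land in $\Hom_U(W,J_P(\tilde H^i)_{\rm la})$. The gap is in your treatment of the fact that $\pi_f$ occurs only as a subquotient. The remedy you propose --- lifting eigenvectors via finite-dimensionality of generalised eigenspaces --- is the right tool for passing from a subquotient of $\Hom_U(W,J_P(\tilde H^i)_{\rm la})$ up to the ambient space, but it cannot produce that subquotient in the first place. The functor $J_P$ is built from the finite-slope-part functor, which is only left exact: if $\pi_f\otimes X'\cong B/A$ with $A\subseteq B\subseteq \tilde H^i_{\rm la}$ closed, one gets a priori only an injection $J_P(B)/J_P(A)\hookrightarrow J_P(B/A)$, and the eigenvector you construct lives in $J_P(B/A)$ with no reason to lie in the image of $J_P(B)$. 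No finite-dimensionality argument downstream repairs this, because the surjectivity you need is upstream of where the $Z$-eigenvector first exists.

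The fix is already in your hands: the isomorphism $J_P(\sigma\otimes X')\cong J_P^{\rm sm}(\sigma)\otimes(X')^N$ is natural in $\sigma$, so exactness of the smooth Jacquet functor implies that $J_P$ is exact on locally $X'$-algebraic representations, and likewise $\Hom_{\mathfrak{d}}(W,-)$ is exact on locally $W$-algebraic ones. This is exactly what the paper invokes. It yields that $\Hom_{\mathfrak{d}}\bigl(W, J_P(\pi_f)\otimes (X')^N\bigr)\cong J_P(\pi_f)\otimes\chi$ genuinely occurs as a subquotient of $\Hom_{\mathfrak{d}}\bigl(W, J_P(\tilde H^i_{\rm la})\bigr)$; after that, your finite-dimensionality/support observation (equivalently, that the support of a subquotient of a coherent sheaf is contained in the support of the ambient sheaf) correctly places $(\theta\chi,\lambda)$ in $\mathcal{E}(i,P,W,U)$, and the dimension bound follows from part (1) of the theorem as you say.
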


In the special case when $\mathfrak{G}(F \otimes \mathbb{R})$ is compact modulo centre, a related statement has been proved (by very different methods) by the second author \cite{DL-algebraic}. 

If $P_1$ and $P_2$ are two different choices of parabolic, with $P_1 \supseteq P_2$, we have a relation between the eigenvarieties attached to $P_1$ and $P_2$ under a mild additional hypothesis, namely that the tame level be of the form $U^{(\p)} \times U_{\p}$, with $U^{(\p)}$ an open compact subgroup away from $\p$ and $U_\p$ an open compact subgroup of $D_1 = [M_1, M_1]$ which admits a certain decomposition with respect to the parabolic $P_2 \cap D_1$ (see \S \ref{ssect:hecke} below). In this situation, we have the following:

\begin{theorem*}[Theorem \ref{thm:comparison}]
If $U$ is of the above type, then the space $\mathcal{E}(i, P_1, W, U)$ is equal to the union of two subvarieties $\mathcal{E}(i, P_1, W, U)_{P_2-{\rm fs}}$ and $\mathcal{E}(i, P_1, W, U)_{P_2-{\rm null}}$, which are respectively endowed with sheaves of $\HH^{\rm ram}$-modules $\overline{\mathcal{F}}(i, P, W, U)_{P_2-{\rm fs}}$ and $\overline{\mathcal{F}}(i, P, W, U)_{P_2-{\rm null}}$ whose direct sum is $\overline{\mathcal{F}}(i, P, W, U)$. 

If $\pi_f \in \Pi(i, P, W, U)$ and $\pi_f$ is not annihilated by the map \eqref{eq:edgemap}, then the point of $\mathcal{E}(i, P_1, W, U)$ corresponding to $\pi_f$ lies in the former subvariety if $J_{P_2}(\pi_\p) \ne 0$, and in the latter if $J_{P_2}(\pi_\p) = 0$. Moreover, there is a closed subvariety of $\mathcal{E}(i, P_2, W^{N_{12}}, U \cap D_2)$ whose image in $\widehat Z_1 \times \Spec \HH^{\rm sph}$ is $\mathcal{E}(i, P_1, W, U)_{P_2-{\rm fs}}$.
\end{theorem*}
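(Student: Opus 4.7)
The plan is to exploit a transitivity property of Emerton's Jacquet functor with respect to the chain $P_2 \subseteq P_1 \subseteq G$. Concretely, I would first construct, for any very strongly admissible locally analytic $G$-representation $V$ (in particular for $V = \tilde H^i_{\rm la}$), a canonical morphism
\[
J_{P_2 \cap M_1}\bigl(J_{P_1}(V)\bigr) \longrightarrow J_{P_2}(V)
\]
of essentially admissible locally analytic $M_2$-representations. Here $P_2 \cap M_1$ is regarded as a parabolic subgroup of $M_1$ with unipotent radical $N_{12} = N_2 \cap M_1$ and Levi $M_2$. The morphism should be built from the identification of both sides with finite-slope subspaces of $N_0$-invariants, for a suitable compact open subgroup $N_0 \subseteq N_2$; the assumed decomposition of $U_\p$ with respect to $P_2 \cap D_1$ is precisely what guarantees that the two-stage Iwahori factorisation $N_0 = (N_0 \cap M_1)(N_0 \cap N_1)$ is compatible with $U_\p$, so that taking finite-slope vectors for a strictly dominant element of $Z_2$ can be carried out in two stages by passing through $Z_1$.

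Granting this morphism, let $F_1 := \Hom_U(W, J_{P_1}(V))$ and $F_2 := \Hom_{U \cap D_2}(W^{N_{12}}, J_{P_2}(V))$. Frobenius reciprocity for the isotypical decomposition of $W$ as an $N_{12}$-representation, combined with the transitivity map above, yields a morphism $F_2 \to F_1$ of essentially admissible $Z_1$-representations, where $Z_1$ acts on $F_2$ through its inclusion $Z_1 \hookrightarrow Z_2$. The image of this morphism is by definition the $P_2$-finite-slope part $F_1^{\rm fs}$; it splits off as a direct summand because the finite-slope projector for a commuting compact operator exists on essentially admissible representations. Let $F_1^{\rm null}$ denote the complementary summand. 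Dualising and applying the anti-equivalence between coherent sheaves on $\widehat Z_1$ and essentially admissible $Z_1$-representations converts this decomposition into the claimed $\mathcal{E}(i, P_1, W, U) = \mathcal{E}(i, P_1, W, U)_{P_2\text{-fs}} \cup \mathcal{E}(i, P_1, W, U)_{P_2\text{-null}}$, together with the corresponding splitting of $\overline{\mathcal{F}}(i, P, W, U)$. The last assertion is then immediate: the sheaf attached to $F_1^{\rm fs}$ is by construction the pushforward along the map $\widehat Z_2 \to \widehat Z_1$ (induced by $Z_1 \hookrightarrow Z_2$) of the subsheaf of $\overline{\mathcal{F}}(i, P_2, W^{N_{12}}, U \cap D_2)$ cut out by the image of $F_2 \to F_1$, and this subsheaf is supported on a closed subvariety of $\mathcal{E}(i, P_2, W^{N_{12}}, U \cap D_2)$. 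Finally, the classical-points statement reduces to the smooth transitivity $J_{P_2}(\pi_\p) \cong J_{P_2 \cap M_1}(J_{P_1}(\pi_\p))$: an eigenvector contributed by $\pi_f$ in $F_1$ lies in the image of $F_2$ if and only if $J_{P_2}(\pi_\p) \neq 0$, and otherwise is killed by the transitivity map and therefore lies in $F_1^{\rm null}$.

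The main obstacle is the construction of the transitivity morphism itself. In the locally analytic setting, Jacquet modules are defined via finite-slope subspaces for a Hecke operator attached to a strictly dominant element of the centre, rather than as ordinary coinvariants, so the classical proof of $J_{P_2} = J_{P_2 \cap M_1} \circ J_{P_1}$ does not transfer verbatim. One must show that the finite-slope construction for $Z_2$ factors through that for $Z_1$, which requires both a careful analysis of the Iwahori-type decomposition of $U_\p$ compatible with $P_2 \cap D_1$, and a spectral-theoretic argument showing that the relevant projectors commute. Once this technical point is secured, the remainder of the argument is largely bookkeeping via the anti-equivalence of categories and the defining universal properties of the eigenvarieties constructed in Theorem~\ref{thm:maintheorem}.
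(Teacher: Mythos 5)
Your overall architecture --- transitivity of the Jacquet functor along $P_2 \subseteq P_1$, a splitting of $\Hom_U(W, J_{P_1}V)$ into finite-slope and null parts for $Z_2$, then dualising via the anti-equivalence with coherent sheaves --- is the same as the paper's. The transitivity isomorphism $J_{P_{12}}(J_{P_1}V) \cong J_{P_2}V$ is indeed established (Theorem \ref{thm:transitivity}) by the two-stage Hecke-operator construction for $N_{2,0} = N_{1,0}\rtimes N_{12,0}$ together with the lemma $(V_{Y-{\rm fs}})_{Z-{\rm fs}} \cong V_{Z-{\rm fs}}$, much as you sketch; note, however, that this step requires no hypothesis on $U_\p$ at all, so the decomposed-subgroup condition is not doing the work you assign to it.

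The genuine gap is in your justification of the direct sum decomposition. You assert that the finite-slope part ``splits off as a direct summand because the finite-slope projector for a commuting compact operator exists on essentially admissible representations.'' No such projector exists in general: the finite-slope-part functor is only left exact, and $V_{\rm fs}$ is typically not a direct summand of $V$ (otherwise $J_P$ itself would be exact). The mechanism in the paper is Proposition \ref{prop:comparison1}: one first passes to the locally $(D_1,W)$-isotypical part, so that $X = \Hom_{\mathfrak{d}_1}(W, J_{P_1}V)$ is a \emph{smooth} $D_1$-representation, and then invokes Bushnell's localization theorem for Hecke algebras of smooth representations, which yields an integer $n$ with $X^{U_\p} = z^n X^{U_\p} \oplus \ker(z^n \mid X^{U_\p})$ and $z$ invertible on the first summand, for $z \in Z_2$ strongly positive. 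This is exactly where the hypothesis that $U_\p$ is decomposed with respect to $P_2 \cap D_1$ enters (it is needed both for Bushnell's theorem and to show that the quotient $X^{M_0'N_0}/X^{U_\p}$ is $Z^+$-torsion, hence that $(X^{U_\p})_{Z_2-{\rm fs}}$ computes $J_{P_{12}}(X)^{M_0'}$). Without this smoothness-plus-decomposition input, your splitting --- and hence the decomposition of $\mathcal{E}(i,P_1,W,U)$ into two closed subvarieties --- is unproved. Relatedly, your map runs $F_2 \to F_1$ with image ``by definition'' the finite-slope part; the paper instead produces a closed embedding of $\Hom_U(W, J_{P_1}V)_{Z_2-{\rm fs}}$ into $\Hom_{U\cap D_2}(W^{N_{12}}, J_{P_2}V)$, dually a quotient sheaf of $\mathcal{F}(i,P_2,W^{N_{12}},U\cap D_2)$ pushing forward to $\mathcal{F}(i,P_1,W,U)_{P_2-{\rm fs}}$, and the identification of this subspace with the finite-slope part is a statement requiring proof, not a definition.
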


\section{Preliminaries}

\subsection{Notation and definitions}

Let $p$ be a prime. Let $K \supseteq \mathbb{Q}_p$ be a complete discretely valued field, which will be the coefficient field for all the representations we consider, and $L$ a finite extension of $\mathbb{Q}_p$ contained in $K$. If $V$ is a locally convex $K$-vector space, we let $V'$ denote the continuous dual of $V$. We write $V'_b$ for $V'$ endowed with the strong topology (which is the only topology on $V'$ we shall consider). 

Let $S$ be an abstract semigroup. A {\it topological representation} of $S$ is a locally convex Hausdorff topological $K$-vector space $V$ endowed with a left action of $S$ by continuous operators. If $S$ has a topology, we say that the representation is {\it separately continuous} if the orbit map of each $v \in V$ is a continuous map $S \to V$, and {\it continuous} if the map $S \times V \to V$ is continuous. In particular, this applies when $S$ is a topological $K$-algebra and $V$ is an $S$-module, in which case we shall refer to $V$ as a separately continuous or or continuous topological $S$-module.

If $G$ is a locally compact topological group and $V$ is a continuous representation of $G$, then $V'$ is a module over the algebra $D(G)$ of measures on $G$ \cite[5.1.7]{emerton-memoir}, defined as $C(G)'$ where $C(G)$ is the space of continuous $K$-valued functions on $G$. If $G$ is a locally $p$-adic analytic group, then for any open compact subgroup $H \subseteq G$, the subalgebra $D(H)$ is Noetherian, and we say $V$ is {\it admissible continuous} {\cite[Lemma 3.4]{ST-banach}} if $V$ is a Banach space and $V'$ is finitely generated over $D(H)$ for one (and hence every) open compact $H$.

If $G$ is a locally $L$-analytic group, in the sense of \cite{ST-distributions}, then we say the representation $V$ is {\it locally analytic} if it is a continuous $G$-representation on a space of compact type, and the orbit maps are locally $L$-analytic functions $G \to V$. This implies \cite[5.1.9]{emerton-memoir} that $V'_b$ is a separately continuous topological module over the topological $K$-algebra $\Dla(G)$ of distributions on $G$, defined as $\Cla(G)'_b$ where $\Cla(G)$ is the space of locally $L$-analytic $K$-valued functions on $G$. For $H$ an open compact subgroup, the subalgebra $\Dla(H)$ is a Fr\'echet-Stein algebra \cite[5.1]{ST-admissible}, so the category of coadmissible $\Dla(H)$-modules is defined \cite[\S 3]{ST-admissible}; we say $V$ is {\it admissible locally analytic} if $V'_b$ is coadmissible as a module over $\Dla(H)$ for one (and hence every) open compact $H$.

Finally, if $G$ is a locally $L$-analytic group for which $Z = Z(G)$ is topologically finitely generated, we say the representation $V$ is {\it $Z$-tempered} if it is locally analytic and can be written as an increasing union of $Z$-invariant $BH$-spaces. This implies that for any open compact subgroup $H \subseteq G$, $V'_b$ is a jointly continuous topological module over the algebra $D^{\rm ess}(H, Z(G)) = \Dla(H) \hotimes{\Dla(Z \cap H)} \Can(\widehat Z)$, where $\widehat Z$ is the rigid space\footnote{The space $\widehat Z$ is in fact defined over $L$, but we shall always consider it as a rigid space over $K$ by base extension.} parametrising characters of $Z$. The algebra $D^{\rm ess}(H, Z(G))$ is also a Fr\'echet-Stein algebra \cite[5.3.22]{emerton-memoir}, and we say $V$ is {\it essentially admissible locally analytic} if $V'_b$ is coadmissible as a module over $D^{\rm ess}(H, Z(G))$ for one (and hence every) open compact $H$.

We write $\Rep_{\rm top}(G)$ for the category of topological representations of $G$, with morphisms being $G$-equivariant continuous linear maps. We consider the following full subcategories:

\begin{itemize}
 \item $\Rep_{\rm cts}(G)$: continuous representations
 \item $\Rep_{\rm cts,ad}(G)$: admissible continuous representations
 \item $\Rep_{\rm top,c}(G)$: topological representations on compact type spaces
 \item $\Rep_{\rm la,c}(G)$: locally analytic representations
 \item $\Rep_{\rm la,c}^z(G)$: $Z$-tempered representations
 \item $\Rep_{\rm la,ad}(G)$: admissible locally analytic representations
 \item $\Rep_{\rm ess}(G)$: essentially admissible locally analytic representations
 \item $\Rep_{\rm cts,fd}(G)$: finite-dimensional continuous representations
 \item $\Rep_{\rm la,fd}(G)$: finite-dimensional locally analytic representations
\end{itemize}

Each of these categories is stable under passing to closed $G$-invariant submodules. The categories $\Rep_{\rm cts,ad}(G)$, $\Rep_{\rm la,ad}(G)$ and $\Rep_{\rm ess}(G)$ have the additional property that all morphisms are strict, with closed image.

The definition of $\Rep_{\rm top}$ and $\Rep_{\rm top,c}$ makes sense if $G$ is only assumed to be a semigroup. We will need one more category of representations of semigroups: if $S$ is a semigroup which contains a locally $L$-analytic subgroup $S_0$, we define $\Rep_{\rm la, c}^z(S)$ to be the full subcategory of $\Rep_{\rm top,c}(S)$ of representations which are locally analytic as representations of $S_0$, and can be written as an increasing union of $Z(S)$-invariant $BH$-subspaces. We will, in fact, only use this when either $S$ is a group (in which case the definition reduces to the definition of $\Rep_{la,c}^z$ above) or $S$ is commutative.

\begin{remark} If $V \in \Rep_{\rm top}(G)$, $V'$ naturally carries a right action of $G$. Hence we follow the conventions of \cite[\S 5.1]{emerton-memoir} by defining the algebra structures on $D(G)$ and its cousins in such a way that the Dirac distributions satisfy $\delta_g \star \delta_h = \delta_{hg}$, so all of our modules are left modules. The alternative is to consider the contragredient action on $V'$, which is the convention followed in \cite{ST-distributions, ST-admissible}; we do not adopt this approach here as we will occasionally wish to consider semigroups rather than groups.
\end{remark}

\subsection{Smooth and locally isotypical vectors}

We now present a slight generalisation of the theory of \cite[\S 7]{emerton-memoir}.

Let $G$ be a locally compact topological group and $H \trianglelefteq G$ closed. We suppose that $G$ admits a countable basis of neighbourhoods of the identity consisting of open compact subgroups; this is automatic if $G$ is locally $p$-adic analytic, for instance. Since $G$ acts on $H$ by homeomorphisms, the set of open compact subgroups of $H$ is preserved by $G$. 

\begin{definition}\label{def:stsm}
Let $V$ be an (abstract) $K$-vector space with an action of $G$. We say a vector $v \in V$ is $H$-smooth if there is an an open compact subgroup $U$ of $H$ such that $Uv = v$.
\end{definition}

Our assumptions imply that the space $V_{H-{\rm sm}}$ of $H$-smooth vectors is $G$-invariant. 

\begin{definition}[{\cite[7.1.1]{emerton-memoir}}]
Suppose $V \in \Rep_{\rm top}(G)$. We define 
\[ V_{H-{\rm st.sm}} = \varinjlim_{\substack{U \subseteq H \\ \text{$U$ open}}} V^U,\]
equipped with the locally convex inductive limit topology.
\end{definition}

Clearly $V_{H-{\rm st.sm}}$ can be identified with $V_{H-{\rm sm}}$ as an abstract $K$-vector space, but the inductive limit topology on the former is generally finer than the subspace topology on the latter. It is clear that the action of $G$ on $V$ induces a topological action on $V_{H-{\rm st.sm}}$, so $(-)_{H-{\rm st.sm}}$ is a functor from $\Rep_{\rm top}(G)$ to itself, and the natural injection $V_{H-{\rm st.sm}} \into V$ is $G$-equivariant. We say $V$ is strictly $H$-smooth if this map is a topological isomorphism.

\begin{proposition}\label{prop:stsm1}~
\begin{enumerate}[(i)]
 \item If $V \in \Rep_{\rm cts}(G)$, then $V_{H-{\rm st.sm}} \in \Rep_{\rm cts}(G)$.
 \item If $V \in \Rep_{\rm top,c}(G)$, then $V_{H-{\rm st.sm}}$ is of compact type and the natural map $V_{H-{\rm st.sm}} \to V$ is a closed embedding.
\end{enumerate}
\end{proposition}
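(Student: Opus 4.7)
My plan is to reduce to Emerton's treatment of the case $H = G$ in \cite[\S 7]{emerton-memoir} by exploiting the normality of $H$ in $G$ to produce a filtration with a compatible $G$-action. Choose a countable decreasing basis $(G_n)_{n \geq 1}$ of open compact subgroups of $G$ at the identity, and set $U_n := G_n \cap H$; these form a cofinal family of open compact subgroups of $H$ with trivial intersection. Since $H$ is normal in $G$ and $G_n$ normalizes itself, $G_n$ also normalizes $U_n$, so each $V^{U_n}$ is a $G_n$-stable closed subspace of $V$ (being the joint kernel of the continuous operators $u - 1$, $u \in U_n$), and $V_{H-{\rm st.sm}} = \varinjlim_n V^{U_n}$. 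The $G$-action on $V_{H-{\rm st.sm}}$ is well defined: for $g \in G$ and $v \in V^{U_n}$ one has $gv \in V^{gU_ng^{-1}}$, and by normality of $H$ the conjugate $gU_ng^{-1}$ is again an open compact subgroup of $H$, hence contains some $U_m$.

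For part (i), each $V^{U_n}$ inherits continuity as a $G_n$-representation from the continuity of $V$. To verify continuity of the full action $G \times V_{H-{\rm st.sm}} \to V_{H-{\rm st.sm}}$, I fix a point $(g_0, v_0)$ with $v_0 \in V^{U_n}$; by continuity of conjugation in $G$, there exist $k, \ell \geq n$ with $gU_\ell g^{-1} \subseteq U_n$ for all $g \in g_0 G_k$. The restriction $(g_0 G_k) \times V^{U_\ell} \to V^{U_n}$ is then continuous, inherited from $G \times V \to V$ via the closed inclusions $V^{U_\ell}, V^{U_n} \hookrightarrow V$. Composing with the canonical continuous injection $V^{U_n} \hookrightarrow V_{H-{\rm st.sm}}$ yields continuity of the action map at $(g_0, v_0)$.

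For part (ii), present $V = \varinjlim_m V_m$ as a compact type space with Banach steps $V_m$ and compact injective transition maps. Each $V^{U_n}$ is closed in $V$ and hence compact type, with induced presentation $V^{U_n} = \varinjlim_m V_m^{U_n}$. Diagonalising the countable double inductive limit, I obtain $V_{H-{\rm st.sm}} = \varinjlim_n V_n^{U_n}$, a countable inductive limit of Banach spaces $V_n^{U_n}$ (closed subspaces of Banach $V_n$) with transitions $V_n^{U_n} \hookrightarrow V_{n+1}^{U_{n+1}}$ factoring through the compact embedding $V_n \hookrightarrow V_{n+1}$ and so compact themselves. This exhibits $V_{H-{\rm st.sm}}$ as of compact type. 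That the map to $V$ is a closed embedding then follows from this presentation: bounded subsets of $V_{H-{\rm st.sm}}$ lie in some $V_n^{U_n}$ (and hence in a Banach step of $V$), from which the inductive limit topology agrees with the subspace topology, and closedness of the image reduces to the corresponding statement on each Banach step $V_m$ via the characterisation of closed subsets of a compact type space in terms of their intersections with the $V_m$.

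The main obstacle is the continuity of the $G$-action in part (i), because locally convex inductive limits do not commute well with Cartesian products; the key observation saving the argument is that the filtration $\{V^{U_n}\}$ is compatible with the filtration $\{G_n\}$ thanks to normality of $H$, so sufficiently small translations in $G$ stay within a single step of the filtration, reducing continuity to a statement about the restriction of the action on $V$ to closed subspaces. For part (ii), the subtle point is matching the inductive limit topology to the subspace topology and confirming closedness of the image, both of which rely essentially on the defining property of compact type spaces that every bounded set is contained in a Banach step.
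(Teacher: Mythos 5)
Your argument for part (i) has a genuine gap at exactly the point you flag as ``the main obstacle''. Showing that the restriction of the action map to $(g_0 G_k) \times V^{U_\ell}$ is continuous does not give continuity of $G \times V_{H-{\rm st.sm}} \to V_{H-{\rm st.sm}}$ at $(g_0, v_0)$: the step $V^{U_\ell}$ is a closed subspace of $V_{H-{\rm st.sm}}$, not a neighbourhood of $v_0$, so continuity of the restriction to it says nothing about the behaviour of the map on an actual neighbourhood of $(g_0, v_0)$. Any open set $B \ni v_0$ in the inductive limit topology contains vectors lying in $V^{U_m}$ for arbitrarily large $m$ and in no smaller step, and for such $v$ and $g \in g_0 G_k$ your construction gives no control on $gv$; the required uniformity over all steps is precisely what fails for locally convex inductive limits against Cartesian products. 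The paper's proof circumvents this by choosing the subgroups $H_i = G_i \cap H$ to be normal in a fixed open compact $G_0 \subseteq G$, so that the finite averaging maps $V^{H_i} \to V^{H_{i-1}}$ are defined, continuous and $G_0$-equivariant. These split each inclusion $V^{H_{i-1}} \hookrightarrow V^{H_i}$, turning the strict inductive limit into a locally convex \emph{direct sum} $\bigoplus_i V_i$ of continuous $G_0$-representations, and joint continuity is then checked on the direct sum as in \cite[7.1.10]{emerton-memoir}. You should either reproduce that splitting or find a substitute for it; as written, part (i) is not proved.

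Part (ii) is closer to the mark: the diagonalisation $V_{H-{\rm st.sm}} = \varinjlim_n (V_n \cap V^{U_n})$ (note the steps are $V_n \cap V^{U_n}$, not ``$V_n^{U_n}$'', since the Banach steps $V_n$ need not be $U_n$-stable) with compact transition maps correctly exhibits the compact type property, and the paper simply reduces this part to \cite[7.1.3]{emerton-memoir}. However, your final claim that closedness of the image ``reduces to the corresponding statement on each Banach step'' is too quick: the intersection of $\bigcup_n V^{U_n}$ with a Banach step is a countable increasing union of closed subspaces of a Banach space, which is not closed in general. That the union is nonetheless closed here is the actual content of Emerton's proposition and needs either a citation or an argument exploiting the compactness of the transition maps, not just the boundedness property of compact type spaces.
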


\begin{proof}
 To show (i), we argue as in \cite[7.1.10]{emerton-memoir}. We let $G_0$ be an open compact subgroup of $G$ and $(H_i)_{i \ge 0}$ a decreasing sequence of open compact subgroups of $H$ satisfying $\bigcap_i H_i = \{1\}$ and with each $H_i$ normal in $G_0$; it is clear that we may do this, by our assumption on $G$. We set $H_i = G_i \cap H$. Then $V^{H_i}$ is a $G_0$-invariant closed subspace of $V$, and letting $V_i$ denote the kernel of the ``averaging'' map $V^{H_i} \to V^{H_{i-1}}$, we have $V^{H-{\rm st.sm}} = \bigoplus_i V_i$. Since each $V_i$ is in $\Rep_{\rm cts}(G_0)$, $V_{H-{\rm st.sm}} \in \Rep_{\rm cts}(G_0)$, which implies it is in $\Rep_{\rm cts}(G)$.
Statement (ii) depends only on $V$ as an $H$-representation, so we are reduced to the case of \cite[7.1.3]{emerton-memoir}.
\end{proof}

It follows from (ii) that for $V \in \Rep_{\rm top, c}(G)$ we do not need to distinguish between $V_{H-{\rm st.sm}}$ and $V_{H-{\rm sm}}$. Moreover, we see that if $V \in \Rep_{\rm la,c}(G)$ or any of the subcategories of admissible representations introduced above, $V_{H-{\rm st.sm}}$ has the same property.

\begin{definition}
Let $V, W$ be abstract $K$-vector spaces with an action of $G$. We say a vector $v \in V$ is locally $(H, W)$-isotypic if there is an integer $n$, an open compact subgroup $U$ of $H$, and a $U$-equivariant linear map $W^n \to V$ whose image contains $v$.
\end{definition}

The locally $(H, W)$-isotypic vectors clearly form a $G$-invariant closed subspace of $V$, since $H$ is normal in $G$. By construction, this is the image of the evaluation map $\Hom_{H-{\rm sm}}(W, V) \otimes_K W \to V$, where $\Hom_{H-{\rm sm}}(W, V)$ denotes the subspace of $H$-smooth vectors in $\Hom_K(W, V) = W' \otimes_K V$ with its diagonal $G$-action.

If $V$ and $W$ are in $\Rep_{\rm top}(G)$, with $W$ finite-dimensional, then $\Hom_K(W, V)$ has a natural topology (as a direct sum of finitely many copies of $V$) and we write $\Hom_{H-{\rm st.sm}}(W, V)$ for $\Hom_K(W, V)_{H-{\rm st.sm}}$, with its inductive limit topology as above. Then $\Hom_{H-{\rm st.sm}}(W, V) \otimes_K W$ is an object of $\Rep_{\rm top}(G)$ with a natural morphism to $V$.

We let $V_{(H, W)-{\rm liso}}$ denote the image of $\Hom_{H-{\rm st.sm}}(W, V) \otimes_K W$ in $V$, endowed with the quotient topology from the source (which is generally finer than the subspace topology on the target). We say $V$ is strictly locally $(H, W)$-isotypical if the map $V_{(H, W)-{\rm liso}} \to V$ is a topological isomorphism. 

\begin{definition}
 We say $W$ is {\bf $H$-good} if $W$ is finite-dimensional, and for any open compact subgroup $U \subseteq H$, $\End_U(W) = \End_H(W) = \End_G(W)$.
\end{definition}

\begin{proposition}\label{prop:liso-functors}
Suppose $W$ is $H$-good, with $B = \End_G(W)$. Then for any representation $V$ of $G$ on an abstract $K$-vector space, the natural map 
\[ \Hom_K(W, V)_{H-{\rm sm}} \otimes_B W \to V\]
is a $G$-equivariant injection. Dually, for any abstract right $B$-module $X$ with a $B$-linear $G$-action which is smooth restricted to $H$, the natural map
\[ X \to \Hom_K(W, X \otimes_B W)_{H-{\rm sm}}\]
is an isomorphism.
\end{proposition}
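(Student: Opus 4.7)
The $G$-equivariance of both maps is immediate from the action formulas; the substance lies in injectivity of the first map and identification with the $H$-smooth part in the second. The key technical input is the identity $\End_K(W)_{H-{\rm sm}} = B$ under the conjugation $G$-action on $\End_K(W)$: a $U$-conjugation-fixed endomorphism lies in $\End_U(W) = B$ by the $H$-good hypothesis, while $B = \End_G(W)$ is pointwise $G$-fixed. A Maschke-style averaging argument then produces a $G$-equivariant $(B, B)$-bimodule retraction $\End_K(W) \twoheadrightarrow B$ splitting the inclusion $B \hookrightarrow \End_K(W)$.

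For the second statement, use the canonical isomorphism (valid since $W$ is finite-dimensional)
\[ \Hom_K(W, X \otimes_B W) \;\cong\; X \otimes_B \End_K(W), \]
equivariant for the diagonal $G$-action (conjugation on $\End_K(W)$). Under this identification, the natural map $X \to \Hom_K(W, X \otimes_B W)$ becomes the inclusion $X = X \otimes_B B \hookrightarrow X \otimes_B \End_K(W)$ induced by $\operatorname{id}_W \in B$. The smoothness of $X$ together with the $G$-triviality of $B$ places the image inside the $H$-smooth part. For the reverse inclusion, the retraction above splits $X \otimes_B \End_K(W) = X \oplus (X \otimes_B M)$, where $M = \End_K(W)/B$ satisfies $M_{H-{\rm sm}} = 0$, and one reduces to showing $(X \otimes_B M)_{H-{\rm sm}} = 0$ by a finite-group averaging argument on $U$-orbits.

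For the first statement, since $\Hom_K(W, V)_{H-{\rm sm}} = \varinjlim_U \Hom_U(W, V)$ is a filtered colimit and $- \otimes_B W$ commutes with filtered colimits, it suffices to show $\Hom_U(W, V) \otimes_B W \to V$ is injective for each open compact $U \subseteq H$. Let $T$ denote the source, $\varepsilon$ the evaluation, and $K = \ker \varepsilon$. The second statement applied to $X := \Hom_U(W, V)$ yields an isomorphism $X \cong \Hom_K(W, T)_{H-{\rm sm}}$, and the composition with the map induced by $\varepsilon$ restricts to the identity on $X$ under this isomorphism. Exactness of $\Hom_K(W, -)$ (as $W$ is finite-dimensional) together with left-exactness of $(-)_{H-{\rm sm}}$ then forces $\Hom_K(W, K)_{H-{\rm sm}} = 0$; since every element of $T$ lies in the image of some $H$-smooth map $w \mapsto \phi \otimes w$, this suffices to conclude $K = 0$.

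The principal obstacle is the reverse inclusion $(X \otimes_B \End_K(W))_{H-{\rm sm}} \subseteq X \otimes_B B$ in the second statement: $H$-smoothness does not in general commute with tensor products, so the pointwise equality $\End_K(W)_{H-{\rm sm}} = B$ is not enough on its own. The retract-and-split strategy reduces the question to showing $(X \otimes_B M)_{H-{\rm sm}} = 0$, which in turn depends on the conjugation $U$-action on the finite-dimensional space $\End_K(W)$ factoring through a finite quotient; this must be extracted with some care from the $H$-good hypothesis.
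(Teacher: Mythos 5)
Your reduction of the second statement to the vanishing of $(X \otimes_B M)_{H-{\rm sm}}$, where $M = \End_K(W)/B$, rests on two averaging arguments --- the ``Maschke-style'' construction of a $G$-equivariant retraction $\End_K(W) \twoheadrightarrow B$, and the ``finite-group averaging argument on $U$-orbits'' --- and both require the conjugation action of an open compact $U \subseteq H$ on $\End_K(W)$ to factor through a finite quotient (or at least to be smooth). You flag this yourself as something ``to be extracted with some care from the $H$-good hypothesis,'' but it does not follow from that hypothesis and is false in general: take $G = H = \mathrm{GL}_2(\mathbb{Z}_p)$ and $W = K^2$ the standard representation. Every open subgroup $U$ contains $1 + p^n M_2(\mathbb{Z}_p)$ and hence has commutant $K$ on $W$, so $W$ is $H$-good with $B = K$; yet the conjugation action on $\End_K(W) = M_2(K)$ is the adjoint action, whose only smooth vectors are the scalars. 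Averaging over the profinite group $U$ is not available $p$-adically for non-smooth actions (the Haar measures $1/[U:U']$ of shrinking cosets are $p$-adically unbounded), so neither the retraction nor the vanishing can be obtained this way. When $B = K$ the retraction itself can be rescued by the normalized trace, but the vanishing of $(X \otimes_B M)_{H-{\rm sm}}$ still cannot be proved by averaging.

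The mechanism that actually works --- and the one the paper uses --- exploits the smoothness of $X$ rather than any smoothness of the action on $\End_K(W)$: a $U$-equivariant map $W \to X \otimes_B W$ has finite-dimensional image, hence lands in $X^{U'} \otimes_B W$ for some open $U' \trianglelefteq U$; on $X^{U'} \otimes_B W$ the group $U'$ acts only through $W$, so $\Hom_{U'}(W, X^{U'} \otimes_B W) = X^{U'} \otimes_B \End_{U'}(W) = X^{U'}$ by applying the $H$-good hypothesis at $U'$ with no averaging, and one then recovers $\Hom_U$ by taking invariants under the finite quotient $U/U'$ acting on $X^{U'}$. I would also note that your deduction of the first statement from the second ends with ``since every element of $T$ lies in the image of some $H$-smooth map $w \mapsto \phi \otimes w$, this suffices to conclude $K = 0$'': a submodule of a sum of images of smooth maps from $W$ need not itself receive a nonzero smooth map from $W$, so this step requires further argument; the paper instead quotes the injectivity for $H$-representations directly from \cite[4.2.4]{emerton-memoir} and only verifies $G$-equivariance.
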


\begin{proof}
If $G = H$, the first statement is \cite[4.2.4]{emerton-memoir} (the assumption in {\it op.cit.} that $W$ be algebraic is only used to show that $W$ is $H$-good). For the general case, the map exists and is injective at the level of $H$-representations, so it suffices to note that the assumption on $W$ implies that the left-hand side has a well-defined $G$-action, for which the map is $G$-equivariant.

For the second part, it suffices to show that the map restricts to an isomorphism $X^U \to \Hom_U(W, X \otimes_B W)$ for any open $U \subseteq H$. Since $W$ is faithful as a $B$-module by construction, the natural map is an injection. Since $X$ is smooth as an $H$-representation, any vector in the left-hand side is in $\Hom_U(W, X^{U'} \otimes_B W)$ for some $U'$, which we may assume to be normal in $U$. However, we have
\[ \Hom_U(W, X^{U'} \otimes_B W) \subseteq \Hom_{U'}(W, X^{U'} \otimes_B W) = X^{U'} \otimes_B \Hom_{U'}(W, W).\]
and since $W$ is $H$-good, we have $\Hom_{U'}(W, W) = B$, so $\Hom_{U'}(W, X^{U'} \otimes_B W) = X^{U'}$. Passing to $U / U'$-invariants gives the result.
\end{proof}

Combining the preceding results shows that for $W$ an $H$-good representation, the two functors 
\[\Hom_{H-{\rm st.sm}}(W, -)\quad\text{and}\quad -\otimes_B W\]
are mutually inverse equivalences between the categories of strictly locally $(H,W)$-isotypical representations of $G$ and strictly $H$-smooth $G$-representations on right $B$-modules. 

\begin{proposition}\label{prop:stsm-embedding}
 If $H$ is a locally $L$-analytic group, and $V$ is in $\Rep_{\rm top}(G) \cap \Rep_{\rm la,c}(H)$, then there is a topological isomorphism $V_{H-{\rm st.sm}} \cong V^\mathfrak{h}$, where $\mathfrak{h}$ is the Lie algebra of $H$. More generally, if $W$ is an $H$-good locally analytic representation of $G$, $V_{(H, W)-{\rm liso}} \cong \Hom_{\mathfrak{h}}(W, V) \otimes_B W$.
\end{proposition}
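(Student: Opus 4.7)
The plan is to reduce the statement to the standard identification, in a locally analytic $H$-representation, of the $H$-smooth vectors with the vectors annihilated by the Lie algebra $\mathfrak{h}$. For $v \in V$, the orbit map $h \mapsto hv$ is locally analytic, and $v$ is killed by $\mathfrak{h}$ if and only if all derivatives of the orbit map at the identity vanish (these being given by the action of the universal enveloping algebra); by local analyticity this happens if and only if the orbit map is locally constant near $1$, i.e.\ if and only if $Uv = v$ for some open compact $U \subseteq H$. Thus $V_{H-{\rm sm}} = V^{\mathfrak{h}}$ as abstract vector spaces.

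For the first assertion $V_{H-{\rm st.sm}} \cong V^{\mathfrak{h}}$, I would combine this with Proposition \ref{prop:stsm1}(ii): since $V \in \Rep_{\rm la,c}(H) \subseteq \Rep_{\rm top,c}(H)$, the natural injection $V_{H-{\rm st.sm}} \to V$ is a closed embedding, so the inductive-limit topology on $V_{H-{\rm st.sm}}$ coincides with the subspace topology from $V$, which is precisely the topology carried by $V^{\mathfrak{h}}$.

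For the general statement, $\Hom_K(W, V) \cong W' \otimes_K V$ again lies in $\Rep_{\rm la,c}(H)$ under the diagonal action (being a finite direct sum of copies of $V$, as $W$ is finite-dimensional locally analytic). Applying the first part to $\Hom_K(W, V)$ yields a topological isomorphism
\[ \Hom_{H-{\rm st.sm}}(W, V) \cong \Hom_K(W, V)^{\mathfrak{h}} = \Hom_{\mathfrak{h}}(W, V). \]
By Proposition \ref{prop:liso-functors}, the evaluation map $\Hom_{H-{\rm st.sm}}(W, V) \otimes_B W \to V$ is injective, so the map defining $V_{(H,W)-{\rm liso}}$ factors as the composite surjection-then-injection $\Hom_{H-{\rm st.sm}}(W, V) \otimes_K W \twoheadrightarrow \Hom_{H-{\rm st.sm}}(W, V) \otimes_B W \hookrightarrow V$. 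This identifies the image $V_{(H, W)-{\rm liso}}$, with its quotient topology, with $\Hom_{\mathfrak{h}}(W, V) \otimes_B W$.

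The main obstacle is the smooth-equals-$\mathfrak{h}$-invariant identification, which although well known (compare \cite[\S 7]{emerton-memoir}) depends essentially on local analyticity of orbit maps rather than mere continuity; once this is in place, the topological upgrade is immediate from Proposition \ref{prop:stsm1}(ii), and the isotypical generalisation follows formally from Proposition \ref{prop:liso-functors}.
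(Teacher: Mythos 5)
Your proposal is correct and follows essentially the same route as the paper: the paper's proof simply observes that $v \in V_{H-{\rm sm}}$ if and only if $v$ is $\mathfrak{h}$-invariant and invokes Proposition \ref{prop:stsm1} for the topological identification, which is exactly your argument (your appeal to part (ii) for the closed-embedding/topology comparison is the right reading, and your deduction of the isotypical case from Proposition \ref{prop:liso-functors} matches the intended formal consequence).
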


\begin{proof} Clear from proposition \ref{prop:stsm1}(i), since a vector $v \in V$ is in $V_{H-{\rm sm}}$ if and only if it is $\mathfrak{h}$-invariant.
\end{proof}

\section{Preservation of admissibility}

\subsection{Spaces of invariants}

In this section we consider a group $G$ and a normal subgroup $H$, and consider the functor of $H$-invariants $V \mapsto V^H : \Rep_{\rm top}(G) \to \Rep_{\rm top}(G / H)$. Our aim is to show that this preserves the various subcategories of admissible representations introduced in the previous section.

\begin{proposition}\label{prop:invariants-continuous}
 If $V$ is an admissible Banach representation of a locally $p$-adic analytic group $G$, and $H \trianglelefteq G$ is a closed normal subgroup, then $V^H$ is an admissible Banach representation of $G/H$.
\end{proposition}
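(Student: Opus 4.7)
The plan is to dualize, identify $(V^H)'$ as a quotient of $V'$ via Hahn--Banach, and then exploit the Noetherianness of the distribution algebra $D(U_0)$.

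First, note $V^H = \bigcap_{h \in H} \ker(h-1)$ is an intersection of closed subspaces (each $h$ acts by a continuous operator on the Banach space $V$), so $V^H$ is itself a Banach space. Normality of $H$ makes $V^H$ stable under $G$, and since $H$ acts trivially, the $G$-action factors through a continuous action of $G/H$ on $V^H$. So we only have to verify admissibility.

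Next, choose any open compact subgroup $U_0 \subseteq G$; then $U_0 \cap H$ is open compact in $H$, and $\bar U := U_0 H/H \cong U_0/(U_0 \cap H)$ is an open compact subgroup of $G/H$. It suffices to show that $(V^H)'_b$ is finitely generated as a $D(\bar U)$-module. By Hahn--Banach, restriction gives a surjection $V'_b \twoheadrightarrow (V^H)'_b$ of right $D(U_0)$-modules. Let $I \subseteq D(U_0)$ be the left ideal generated by $\{\delta_h - 1 : h \in U_0 \cap H\}$; because $U_0 \cap H$ is normal in $U_0$, conjugation by elements of $U_0$ permutes these generators, so $I$ is in fact a two-sided ideal, and the quotient $D(U_0)/I$ is canonically identified with $D(\bar U)$. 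For any $\phi \in V'$ and $h \in U_0 \cap H \subseteq H$, the element $\phi \cdot (\delta_h - 1)$ vanishes on $V^H$ since $h$ fixes $V^H$ pointwise, so $V' \cdot I$ lies in the kernel of $V' \to (V^H)'$.

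Consequently, $(V^H)'$ is a quotient of $V'/(V' \cdot I)$. Since $V$ is admissible, $V'$ is finitely generated over $D(U_0)$, so $V'/(V' \cdot I)$ is finitely generated over $D(U_0)/I \cong D(\bar U)$, and therefore so is $(V^H)'$.

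The main technical obstacle is the topological bookkeeping: one must check that the quotient topology on $V'/(V' \cdot I)$ agrees with the strong dual topology on $(V^H)'_b$, and that algebraic finite generation over the Noetherian Banach algebra $D(U_0)$ passes to the quotient in a way compatible with the definition of admissibility from \cite{ST-banach}. This is handled by the fact that $D(U_0)$ is a Noetherian Banach algebra, so any finitely generated submodule of a finitely generated module is automatically closed, and quotients inherit the correct Banach module structure; the equivalence between admissibility and finite generation of the dual then gives the conclusion.
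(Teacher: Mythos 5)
Your proof is correct and follows essentially the same route as the paper's: dualize, use Hahn--Banach to realise $(V^H)'$ as a quotient of the finitely generated module $V'$, and observe that the action factors through the distribution algebra of the quotient group, which is itself a quotient of $D(U_0)$. The only cosmetic difference is your description of the kernel of $D(U_0) \to D(\bar U)$ as the ideal $I$ generated by the $\delta_h - 1$; the claimed identification $D(U_0)/I \cong D(\bar U)$ is not fully justified, but it is also not needed, since surjectivity of $D(U_0) \to D(\bar U)$ (dual to the closed embedding $C(\bar U) \hookrightarrow C(U_0)$) together with finite generation of $(V^H)'$ over $D(U_0)$ already gives finite generation over $D(\bar U)$.
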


\begin{proof}
Suppose first $G$ is compact, so $D(G)$ is Noetherian. Since $H$ is normal and acts continuously on $V$, $V^H$ is a $G$-invariant closed subspace; so $(V^H)'$ is a $D(G)$-module quotient of a finitely-generated $D(G)$-module, and hence is a finitely-generated $D(G)$-module. However, the closed embedding $C(G/H) \into C(G)$ dualises to a surjection $D(G) \to D(G/H)$, and it is clear that the $D(G)$-action on $(V^H)'$ factors through this surjection. Hence $(V^H)'$ is finitely-generated over $D(G/H)$.
In the general case, let $G_0$ be a compact open subgroup of $G$ and $H_0 = G_0 \cap H$. Then $G_0 / H_0$ is an open compact subgroup of $G/H$. By the above, $V^{H_0}$ is an admissible continuous $G_0 / H_0$-representation. Since $V^H$ is a closed $G_0 / H_0$-invariant subspace of $V^{H_0}$ it is also admissible continuous as a representation of $G_0/H_0$ and hence of $G/H$.
\end{proof}

We now suppose $G$ is a locally $L$-analytic group. We write $H \trianglelefteq_L G$ to mean that $H$ is a closed normal subgroup of $G$ and the $\mathbb{Q}_p$-subspace $\operatorname{Lie}(H) \subseteq \operatorname{Lie}(G)$ is in fact an $L$-subspace, so $H$ and $G/H$ also inherit locally $L$-analytic structures.

\begin{proposition}\label{prop:invariants-admissible}
 If $V$ is an admissible locally analytic representation of $G$, and $H \trianglelefteq_L G$. Then $V^H$ is an admissible locally analytic representation of $G/H$.
\end{proposition}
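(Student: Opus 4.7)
The plan is to follow the pattern of Proposition \ref{prop:invariants-continuous}, replacing the Noetherian algebras $D(G)$ with the Fr\'echet--Stein algebras $\Dla(G)$ and coherent modules with coadmissible modules.

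First I would reduce to the case where $G$ is compact. Choose a compact open subgroup $G_0 \subseteq G$ (which we may take to be locally $L$-analytic), and set $H_0 = G_0 \cap H$. Then $H_0 \trianglelefteq_L G_0$ and $G_0/H_0$ is an open subgroup of $G/H$. Since admissibility is checked on an arbitrary open compact subgroup, it suffices to show $V^{H_0}$ is admissible locally analytic as a $G_0/H_0$-representation, and since $V$ is admissible over $G_0$ by restriction, we are reduced to the compact case. Furthermore, by replacing $V$ with $V^{H_0}$ if necessary (and using that this is still an object of the appropriate category, as sketched below), we are reduced to $H \subseteq G$ both compact, locally $L$-analytic, with $H$ normal in the $L$-analytic sense.

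Next, $V^H$ is a $G$-invariant closed subspace of $V$ (closedness following from the continuity of the $H$-action, as in the Banach case). In the Fr\'echet--Stein setting, quotients of coadmissible $\Dla(G)$-modules by closed submodules are coadmissible (see \cite[\S 3]{ST-admissible}), so the dual surjection $V'_b \twoheadrightarrow (V^H)'_b$ exhibits $(V^H)'_b$ as a coadmissible $\Dla(G)$-module. Thus $V^H$ is admissible locally analytic as a $G$-representation.

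It remains to show that, since the $G$-action on $V^H$ factors through $G/H$, coadmissibility of $(V^H)'_b$ over $\Dla(G)$ implies coadmissibility over $\Dla(G/H)$. The closed embedding $\Cla(G/H) \hookrightarrow \Cla(G)$ dualises to a strict surjection $\Dla(G) \twoheadrightarrow \Dla(G/H)$, through which the action on $(V^H)'_b$ manifestly factors. The Fr\'echet--Stein presentation $\Dla(G) = \varprojlim_r D_r(G)$ of \cite[5.1]{ST-admissible} is compatible with this surjection, in the sense that for each $r$ sufficiently close to $1$ there is an induced surjection $D_r(G) \twoheadrightarrow D_r(G/H)$ whose kernel is the closure of the two-sided ideal generated by $\{\delta_h - 1 : h \in H\}$, and the transition maps between the $D_r(G/H)$ are flat because the transition maps for $\Dla(G)$ are. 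Pulling back a coadmissible system for $(V^H)'_b$ over $\Dla(G)$ along these surjections then yields a coadmissible system over $\Dla(G/H)$, since each $D_r(G) \otimes_{\Dla(G)} (V^H)'_b$ is already annihilated by the kernel and therefore coincides with $D_r(G/H) \otimes_{\Dla(G/H)} (V^H)'_b$ as a finitely generated $D_r(G/H)$-module.

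The main obstacle is verifying carefully that the Fr\'echet--Stein structures on $\Dla(G)$ and $\Dla(G/H)$ are compatible under the quotient map in the strong sense needed to transfer coadmissibility; this is essentially a statement about the behaviour of the completed algebras $D_r(G)$ under the $L$-analytic quotient $G \twoheadrightarrow G/H$, which relies on the $L$-analytic normality hypothesis $H \trianglelefteq_L G$ (so that a good coordinate system on $G$ can be split as a product of coordinates on $H$ and on $G/H$).
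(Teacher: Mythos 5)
Your proposal is correct and follows essentially the same route as the paper: reduce to compact $G$, observe that $V^H$ is a closed $G$-invariant (hence admissible) subspace on which the action factors through $\Dla(G/H)$, and transfer coadmissibility along the surjection $\Dla(G) \twoheadrightarrow \Dla(G/H)$. The only real difference is that the paper disposes of the compatibility-of-Fr\'echet--Stein-structures issue you flag as the ``main obstacle'' by noting that $\Dla(G/H)$, being a Hausdorff quotient of $\Dla(G)$, is a coadmissible $\Dla(G)$-module, and then invoking \cite[3.8]{ST-admissible}, which says precisely that coadmissibility transfers across such a morphism --- so your hands-on construction of compatible presentations (and in particular the shaky claim that flatness of the transition maps for the quotient algebras is inherited from those for $\Dla(G)$; it holds rather because $G/H$ is itself a compact locally $L$-analytic group, so $\Dla(G/H)$ is Fr\'echet--Stein in its own right) is unnecessary.
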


\begin{proof}
As above, we may assume $G$ is compact. As in the Banach case, we note that $V^H$ is a closed $G$-invariant subspace of $V$, so it is an admissible locally analytic $G$-representation \cite[6.4(ii)]{ST-admissible} on which the action of $G$ factors through $G/H$. Hence the action of $\Dla(G)$ on $(V^H)'$ factors through $\Dla(G/H)$. Since the natural map $\Cla(G/H) \to \Cla(G)$ is a closed embedding, $\Dla(G/H)$ is a Hausdorff quotient of $\Dla(G)$ and hence a coadmissible $\Dla(G)$-module, and so by \cite[3.8]{ST-admissible} we see that $(V^H)'_b$ is coadmissible as a $D(G/H)$-module as required.
\end{proof}

We now assume that $G$ is a locally $L$-analytic group with $Z(G)$ topologically finitely generated, and $H \trianglelefteq_L G$. In this case $Z(G/H)$ may be much larger than $Z(G) / (Z(G) \cap H)$, as in the case of $\mathbb{Q}_p^\times \ltimes \mathbb{Q}_p$; so an element of $\Rep_{\rm la,c}^z(G)$ on which $H$ acts trivially need not lie in $\Rep_{\rm la,c}^z(G/H)$. Moreover, it is not obvious that $Z(G/H)$ need be topologically finitely generated if $Z(G)$ is so. We shall therefore assume that $G$ is a direct product $H \times J$, with $H, J \trianglelefteq_L G$, and $Z(H)$ and $Z(J)$ are both topologically finitely generated.

\begin{proposition}\label{prop:invariants-essadm}
In the above situation, for any essentially admissible locally analytic $G$-representation $V$, the space $V^H$ is an essentially admissible locally analytic representation of $J$.
\end{proposition}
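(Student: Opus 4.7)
The plan is to adapt the strategy of Proposition~\ref{prop:invariants-admissible}, working with the Fr\'echet--Stein algebras $D^{\rm ess}(-, Z(-))$ in place of $\Dla(-)$, and exploiting the product decomposition $G = H \times J$ at the level of distribution algebras. Choose open compact subgroups $H_0 \subseteq H$ and $J_0 \subseteq J$, and set $G_0 = H_0 \times J_0$. The identity $Z(G) = Z(H) \times Z(J)$ yields a tensor-product factorisation
\[ D^{\rm ess}(G_0, Z(G)) \;\cong\; D^{\rm ess}(H_0, Z(H)) \hotimes{K} D^{\rm ess}(J_0, Z(J)), \]
obtained by combining the factorisations of $\Dla(G_0)$, $\Dla(Z(G) \cap G_0)$, and $\Can(\widehat{Z(G)})$. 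This is the key algebraic input and is the reason for the direct-product hypothesis.

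First, $V^H$ is a closed $G$-invariant subspace of $V$, so by the essentially-admissible analogue of \cite[6.4(ii)]{ST-admissible}---which follows from the general fact that a Hausdorff quotient of a coadmissible module over a Fr\'echet--Stein algebra is again coadmissible, together with \cite[5.3.22]{emerton-memoir}---the space $V^H$ is essentially admissible as a $G$-representation, so $(V^H)'_b$ is coadmissible over $D^{\rm ess}(G_0, Z(G))$. Now $H$ acts trivially on $V^H$, and so in particular $Z(H) \subseteq H$ acts trivially. Consequently the action of $D^{\rm ess}(G_0, Z(G))$ on $(V^H)'_b$ annihilates the two-sided ideal $I$ generated by the augmentation ideal of $\Dla(H_0)$ together with the maximal ideal of $\Can(\widehat{Z(H)})$ at the trivial character. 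The tensor-product factorisation above then identifies $D^{\rm ess}(G_0, Z(G)) / I$ canonically with $D^{\rm ess}(J_0, Z(J))$.

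Finally, I appeal to the essentially-admissible analogue of \cite[3.8]{ST-admissible}: if $A \to B$ is a continuous surjection of Fr\'echet--Stein $K$-algebras and $B$ is coadmissible as an $A$-module, then any coadmissible $A$-module whose action factors through $B$ is coadmissible as a $B$-module. Both $A = D^{\rm ess}(G_0, Z(G))$ and $B = D^{\rm ess}(J_0, Z(J))$ are Fr\'echet--Stein by \cite[5.3.22]{emerton-memoir}; the module-coadmissibility of $B$ over $A$ reduces, via the tensor-product decomposition, to the statement that the one-dimensional module $K$ is coadmissible over $D^{\rm ess}(H_0, Z(H))$ through the augmentation-plus-evaluation map.

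The main obstacle is this last coadmissibility check: one must verify that the one-dimensional quotient $K$ behaves well on each Banach completion in the chosen Fr\'echet--Stein presentation of $D^{\rm ess}(H_0, Z(H))$ from \cite[5.3.22]{emerton-memoir}, so that the quotient map $A \to B$ actually has coadmissible kernel. Once this is granted, everything assembles to show that $(V^H)'_b$ is coadmissible as a $D^{\rm ess}(J_0, Z(J))$-module, giving the required essential admissibility of $V^H$ as a $J$-representation.
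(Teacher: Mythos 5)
Your argument is correct and follows essentially the same route as the paper: the key tensor-product factorisation $D^{\rm ess}(G_0, Z(G)) \cong D^{\rm ess}(H_0, Z(H)) \hotimes{K} D^{\rm ess}(J_0, Z(J))$, the observation that the action on $(V^H)'_b$ factors through the augmentation of the $H$-factor, and the transfer of coadmissibility along the resulting quotient map. The final coadmissibility check you flag as the main obstacle is handled in the paper simply by noting that $D^{\rm ess}(J_0, Z(J))$ is a Hausdorff quotient of $D^{\rm ess}(G_0, Z(G))$ and hence a coadmissible algebra over it, which is the same point in a slightly different packaging.
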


\begin{proof} By \cite[6.4.11]{emerton-memoir}, any closed invariant subspace of an essentially admissible representation is essentially admissible; so it suffices to assume that $V = V^H$. Let $J_0 \subseteq J$ and $H_0 \subseteq H$ be open compact subgroups. Then $G_0 = J_0 \times H_0$ is an open compact subgroup of $G$. We have $Z(G) = Z(H) \times Z(J)$, and hence $\widehat{Z(G)} = \widehat{Z(H)} \times \widehat{Z(J)}$.

We now unravel the tensor products to find that the algebra 
\[D^{\rm ess}(G_0, Z(G)) = \Dla(G_0) \hotimes{\Dla(G_0 \cap Z(G))} \Can(\widehat{Z(G)})\]
decomposes as
\begin{gather*}
\left( \Dla(H_0) \hotimes{K} \Dla(J_0)\right) \hotimes{\Dla(H_0 \cap Z(H)) \hotimes{K} \Dla(J_0 \cap Z(J))} \left(\Can(\widehat{Z(H)}) \hotimes{K} \Can(\widehat{Z(J)}) \right)\\
= \left(\Dla(H_0) \hotimes{\Dla(H_0 \cap Z(H))} \Can(\widehat{Z(H)})\right) \hotimes{K} \left(\Dla(J_0) \hotimes{\Dla(J_0 \cap Z(J))} \Can(\widehat{Z(J)})\right) \\
= D^{\rm ess}(H_0, Z(H)) \hotimes{K} D^{\rm ess}(J_0, Z(J)).
\end{gather*}

By assumption, the action of $D^{\rm ess}(H_0, Z(H))$ on $V'_b$ factors through the augmentation map to $K$; so the action of $D^{\rm ess}(G_0, Z(G))$ factors through $D^{\rm ess}(J_0, Z(J))$. Since $D^{\rm ess}(J_0, Z(J))$ is a Hausdorff quotient of $D^{\rm ess}(G_0, Z(G))$, it is a coadmissible $D^{\rm ess}(G_0, Z(G))$-algebra, and thus $V'_b$ is a coadmissible $D^{\rm ess}(J_0, Z(J))$-module as required.
\end{proof}

\subsection{Admissible representations of product groups}

In this section, we'll recall the theory presented in \cite[\S 7]{emerton-memoir} of representations of groups of the form $G \times \Gamma$, where $G$ is a locally $L$-analytic group and $\Gamma$ an arbitrary locally profinite (locally compact and totally disconnected) topological group. This will allow us to give more ``global'' formulations of the results of the previous section.

Let $*$ denote one of the set $\{$``admissible Banach'', ``admissible locally analytic'', ``essentially admissible locally analytic''$\}$, so we shall speak of ``$*$-admissible representations''. Whenever we consider essentially admissible representations we will assume that the groups concerned have topologically finitely generated centre, so the concept is well-defined.

\begin{definition}[{\cite[7.2.1]{emerton-memoir}}] A $*$-admissible representation of $(G, \Gamma)$ is a locally convex $K$-vector space $V$ with an action of $G \times \Gamma$ such that
 \begin{itemize}
  \item For each open compact subgroup $U \subseteq \Gamma$, $V^U$ has property $*$ as a representation of $G$ (in the subspace topology);
  \item V is a strictly smooth $\Gamma$-representation in the sense of definition \ref{def:stsm}.
 \end{itemize}
\end{definition}

\begin{remark}
 Our terminology is slightly different from that of \cite{emerton-memoir}, where such representations are described as $*$-admissible representations of $G \times \Gamma$. We adopt the formulation above in order to avoid ambiguity when $\Gamma$ is also a locally analytic group.
\end{remark}

The results of the preceding section can be combined to prove:

\begin{proposition}
 If $G$ and $H$ are locally $L$-analytic groups, $V$ is a $*$-representation of $G \times H$, and $Z(H)$ is compact if $*$ = ``essentially admissible locally analytic'', then the space
\[ V_{H-{\rm st.sm}} = \varinjlim_{\substack{U \subseteq H\\\text{open compact}}} V^U \]
is a $*$-admissible representation of $(G, H)$.
\end{proposition}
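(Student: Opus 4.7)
The plan is to verify directly the two conditions defining a $*$-admissible representation of $(G,H)$ for the space $V_{H-{\rm st.sm}} = \varinjlim_U V^U$. Condition (ii), namely strict smoothness as an $H$-representation, is immediate from the construction as a locally convex inductive limit over open compact $U \subseteq H$. For condition (i), I need to show that for each such $U$, the fixed subspace $(V_{H-{\rm st.sm}})^U$ is a $*$-representation of $G$. A vector of $V_{H-{\rm st.sm}}$ is $U$-fixed if and only if it already lies in $V^U \subseteq V_{H-{\rm st.sm}}$, so $(V_{H-{\rm st.sm}})^U = V^U$ as abstract vector spaces; the topologies agree by Proposition \ref{prop:stsm1}(ii) in the compact-type settings (where $V_{H-{\rm st.sm}} \hookrightarrow V$ is a closed embedding), and by the direct-sum decomposition $V_{H-{\rm st.sm}} = \bigoplus V_i$ from the proof of Proposition \ref{prop:stsm1}(i) in the Banach case.

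With the identification $(V_{H-{\rm st.sm}})^U = V^U$ in hand, the key move is to restrict the action of $G \times H$ on $V$ to the open subgroup $G \times U$. Restriction to an open subgroup preserves each of our admissibility properties, as these are detected at the level of open compact subgroups, so $V$ remains $*$-admissible as a $G \times U$-representation. Now $U$ is a direct factor of $G \times U$, hence in particular a closed $L$-normal subgroup $U \trianglelefteq_L G \times U$, with $\operatorname{Lie}(U)$ an $L$-subspace of $\operatorname{Lie}(G) \oplus \operatorname{Lie}(U)$. I can then invoke Proposition \ref{prop:invariants-continuous}, Proposition \ref{prop:invariants-admissible}, or Proposition \ref{prop:invariants-essadm} according to the case, to conclude that $V^U$ is a $*$-admissible representation of the quotient $(G \times U)/U \cong G$.

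For the essentially admissible case, Proposition \ref{prop:invariants-essadm} requires a product decomposition of the ambient group into factors with topologically finitely generated centres; the factorisation $G \times U$ qualifies, since $Z(U)$ is topologically finitely generated as $U$ is compact $p$-adic analytic, and $Z(G)$ is topologically finitely generated by hypothesis. The assumption that $Z(H)$ be compact enters precisely when verifying that the essentially admissible condition on $V$ as a $G \times H$-representation still applies after restricting to $G \times U$; the centres $Z(G) \times Z(H)$ and $Z(G) \times Z(U)$ are in general different, but when $Z(H)$ is compact it is contained in every sufficiently large open compact subgroup, and the coadmissibility of $V'_b$ transfers between the two Fr\'echet--Stein algebras via the tensor-product factorisation worked out in Proposition \ref{prop:invariants-essadm}.

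The main obstacle is exactly this last compatibility. In the Banach and admissible locally analytic cases the passage from $G \times H$ to $G \times U$ is routine, since admissibility there depends only on a choice of open compact subgroup and is preserved under restriction to open subgroups; but in the essentially admissible setting the auxiliary character variety $\widehat{Z(H)}$ plays a nontrivial role in the defining Fr\'echet--Stein algebra, and the compactness hypothesis on $Z(H)$ is what allows one to compare $D^{\rm ess}(G_0 \times H_0, Z(G \times H))$ with $D^{\rm ess}(G_0 \times H_0, Z(G) \times Z(U))$ and deduce the required coadmissibility of $(V^U)'_b$ as a module over the latter.
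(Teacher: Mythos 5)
Your proposal is correct and follows essentially the same route as the paper: identify $(V_{H-{\rm st.sm}})^U$ with $V^U$ topologically (the paper cites the strict inductive limit property from Bourbaki, you invoke Proposition \ref{prop:stsm1}, but these amount to the same reduction), and then apply Propositions \ref{prop:invariants-continuous}, \ref{prop:invariants-admissible} and \ref{prop:invariants-essadm} to the pair $U \trianglelefteq_L G \times U$, using compactness of $Z(H)$ to pass between essential admissibility over $G \times H$ and over $G \times U$. No gaps.
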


\begin{proof}
Since the natural maps $V^{U} \into V^{U'}$ for $U' \subseteq U$ are closed embeddings, the map $V^U \into V_{H-{\rm st.sm}}$ is also a closed embedding \cite[page II.32]{bourbaki-TVS}; and its image is clearly $\left(V_{H-{\rm st.sm}}\right)^U$, so it suffices to check that $V^U$ has property $*$ for each $U$. 

 In the admissible Banach case, this is clear from proposition \ref{prop:invariants-continuous}. In the admissible locally analytic case, it likewise follows from proposition \ref{prop:invariants-admissible}. In the essentially admissible case, it suffices to note that the assumption on $Z(H)$ implies that $V$ is essentially admissible as a representation of $G \times H$ if and only if it is essentially admissible as a representation of $G \times U$ for any open compact $U \subseteq H$; so we are in the situation of proposition \ref{prop:invariants-essadm}.
\end{proof}

A slightly more general version of this applies to groups of the form $G \times H \times J$, where $G$ and $H$ are locally $L$-analytic and $J$ is an arbitrary locally compact topological group.

\begin{theorem}\label{thm:invariants-generic}
 Let $V$ be a $*$-admissible representation of $(G \times H, J)$, where $Z(H)$ is compact in the essentially admissible case. Then $V_{H-{\rm st.sm}}$ is a $*$-admissible representation of $(G, H \times J)$.
\end{theorem}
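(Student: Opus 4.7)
My plan is to reduce the theorem to the preceding proposition by a standard ``unravelling'' argument. Since $H$ and $J$ commute inside $G \times H \times J$, the $J$-action on $V$ preserves each $V^{U_H}$ for $U_H \subseteq H$ an open compact subgroup, so the $J$-action descends to an action on $V_{H-{\rm st.sm}}$, and the embedding $V_{H-{\rm st.sm}} \hookrightarrow V$ is $G \times H \times J$-equivariant.

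First I would verify that $V_{H-{\rm st.sm}}$ is strictly smooth as a representation of $H \times J$. Strict smoothness over $H$ is built into the definition of $V_{H-{\rm st.sm}}$, so it remains to handle the $J$-direction. Since $V$ is strictly smooth over $J$ by hypothesis, every vector $v \in V_{H-{\rm st.sm}} \subseteq V$ is fixed by some open compact $U_J \subseteq J$, and the topology on $V_{H-{\rm st.sm}}$ is the inductive limit of the closed subspaces $V^{U_H}$, each of which is $J$-stable and inherits from $V$ a strict-smoothness structure over $J$. Combining the two directed systems (both consisting of closed embeddings) one obtains $V_{H-{\rm st.sm}} = \varinjlim_{U_H, U_J} V^{U_H \times U_J}$ as a locally convex space, which is exactly strict smoothness as an $H \times J$-representation.

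Next I would verify the admissibility of invariants. For any open compact $U \subseteq H \times J$ one can find an open compact subgroup of product form $U_H \times U_J$ contained in $U$; since closed $G$-invariant subspaces of $\ast$-admissible representations remain $\ast$-admissible, it suffices to treat $U = U_H \times U_J$. Here I would use the identification
\[
(V_{H-{\rm st.sm}})^{U_H \times U_J} = \bigl((V^{U_J})_{H-{\rm st.sm}}\bigr)^{U_H}.
\]
By hypothesis $V^{U_J}$ is $\ast$-admissible as a $G \times H$-representation, and in the essentially admissible case $Z(H)$ is compact by assumption, so the preceding proposition applies to $V^{U_J}$ and shows that $(V^{U_J})_{H-{\rm st.sm}}$ is a $\ast$-admissible representation of $(G, H)$. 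By definition, taking its $U_H$-invariants then yields a $G$-representation with property $\ast$, completing the verification.

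The main anticipated difficulty lies in the topological bookkeeping of the first step: one must confirm that the two inductive systems (over $U_H$ and over $U_J$) can be combined into a single strict-LF inductive system indexed by product subgroups $U_H \times U_J$, and that the resulting topology coincides with the one obtained by iterating the two operations. This should reduce to the fact that each inclusion $V^{U_H} \hookrightarrow V$ is a closed embedding of a strictly smooth $J$-subrepresentation of the strictly smooth $J$-representation $V$, so that intersections and inductive limits interact as expected; but it is the sort of calculation one has to perform carefully once rather than gesture at.
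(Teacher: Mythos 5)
Your argument is correct and follows essentially the same route as the paper's: both identify $V_{H-{\rm st.sm}}$ with the strict inductive limit $\varinjlim_{U_H, U_J} V^{U_H \times U_J}$, use cofinality of product-form subgroups in $H \times J$, and reduce the admissibility of $(V_{H-{\rm st.sm}})^{U_H \times U_J} = (V^{U_J})^{U_H}$ to the preceding results on invariants of a $*$-admissible $G \times H$-representation. The paper's proof is merely a more compressed version of the same bookkeeping you carry out.
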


\begin{proof}
We have
\[ V_{H-{\rm st.sm}} = (V_{J-{\rm st.sm}})_{H-{\rm st.sm}} = \varinjlim_{U \subseteq H, U' \subseteq J} V^{U \times U'},\]
which is clearly a strict inductive limit; and $V^{U \times U'}$ is the $U$-invariants in the $*$-admissible $G \times H$-representation $V^{U'}$, and hence an admissible $G$-representation. The open compact subgroups of $H \times J$ of the form $U \times U'$ are cofinal in the family of all open compact subgroups, so $V_{H-{\rm st.sm}}$ is a $*$-admissible $(G, H \times J)$-representation as required. 
\end{proof}

We write $\Rep_{\rm cts, ad}(G, \Gamma)$ for the category of admissible continuous $(G, \Gamma)$-rep\-re\-sen\-ta\-tions, and similarly for the other admissibility conditions.

\subsection{Ordinary parts and Jacquet modules}

Let $\mathcal{G}$ be a connected reductive algebraic group over $L$, and $\mathcal{P}$ a parabolic subgroup of $\mathcal{G}$ with Levi factor $\mathcal{M}$. We write $\mathcal{Z} = Z(\mathcal{M})$, $\mathcal{D} = \mathcal{M}^{ss}$. We use Roman letters $G, P, M, Z, D$ for the $L$-points of these, which are locally $L$-analytic groups. Note that the multiplication map $Z \times D \to M$ has finite kernel and cokernel, and hence a representation of $M$ has property $*$ if and only if it has the corresponding property as a representation of $Z \times D$.

Suppose that $V \in \Rep_{\rm cts,adm}(G)$. We say $V$ is {\bf unitary} if the topology of $V$ can be defined by a $G$-invariant norm (or equivalently if $V$ contains a $G$-invariant separated open lattice); this is automatic if $G$ is compact, but not otherwise. The category $\Rep_{\rm u,adm}(G)$ of unitary admissible Banach representations of $G$ over $K$ is equivalent to $\operatorname{Mod}_G^{\varpi-{\rm adm}}(\mathcal{O}_K)_\mathbb{Q}$, where $\operatorname{Mod}_G^{\varpi-{\rm adm}}(\mathcal{O}_K)$ is the category considered in \cite[2.4.5]{emerton-ord1} and the subscript $\mathbb{Q}$ denotes the category with the same objects but all Hom-spaces tensored with $\mathbb{Q}$.

In \cite[\S 3]{emerton-ord1}, Emerton constructs the ordinary part functor 
\[ \Ord_P: \operatorname{Mod}_G^{\varpi-{\rm adm}}(\mathcal{O}_K) \to \operatorname{Mod}_M^{\varpi-{\rm adm}}(\mathcal{O}_K).\]
This functor is additive, so it extends to a functor 
\[\Ord_P : \Rep_{\rm u,adm}(G) \to \Rep_{\rm u,adm}(M).\]

It is easy to extend this to representations of product groups of the type considered above. Let $\Gamma$ be a locally profinite topological group, and $V$ a unitary admissible Banach $(G, \Gamma)$-representation (i.e. admitting a $G \times \Gamma$-invariant norm). We define 
\[ \Ord_P(V) = \varinjlim_{\substack{U \subseteq \Gamma \\ \text{open}}} \Ord_P(V^U).\]
Given any subgroups $U' \subseteq U$, there is an ``averaging'' map $\pi: V^{U'} \to V^U$; and we may write $V^{U'}$ as a locally convex direct sum $V^{U'} = V^U \oplus V^\pi$, where $V^\pi$ denotes the kernel of $\pi$. Since the ordinary part functor commutes with direct sums, we find that
$\Ord_P(V^{U'}) = \Ord_P(V^U) \oplus \Ord_P(V^{\pi})$; thus the natural map $\Ord_P(V^U) \to \Ord_P(V^{U'})$ is a closed embedding, and if $U' \trianglelefteq U$, we have $\Ord_P(V^{U'})^U = \Ord_P(V^U)$. Passing to the direct limit, we have $\Ord_P(V)^U = \Ord_P(V^U)$, and $\Ord_P(V)$ is an admissible Banach $(M, \Gamma)$-representation. 

An identical argument applies to the Jacquet module functor $J_P : \Rep_{\rm ess}(G) \to \Rep_{\rm ess}(M)$ of \cite{emerton-jacquet} (and indeed to any functor which preserves direct sums). Combining this with theorem \ref{thm:invariants-generic} above, we have:

\begin{proposition}\label{prop:weak-jacquet}~
\begin{enumerate}
\item If $V \in \Rep_{\rm u, ad}(G, \Gamma)$ and $W \in \Rep_{\rm cts,fd}(M)$, then 
\[\Hom_{D-{\rm st.sm}}(W, \Ord_P V) \in \Rep_{\rm cts,ad}(Z, D \times \Gamma).\]
Moreover, $\Hom_{D-{\rm st.sm}}(W, \Ord_P V)$ is unitary if $W$ is.
\item If $V \in \Rep_{\rm ess}(G, \Gamma)$ and $W \in \Rep_{\rm la, fd}(M)$, and $\mathfrak{d} = \Lie D$, then 
\[\Hom_{D-{\rm st.sm}}(W, J_P V) = \Hom_{\mathfrak{d}}(W, J_P V) \in \Rep_{\rm ess}(Z, D \times \Gamma).\]
\end{enumerate}
\end{proposition}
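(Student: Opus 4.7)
The plan is to combine three things from the excerpt: the product-group extensions of $\Ord_P$ and $J_P$ constructed in the two paragraphs immediately above the proposition; the equivalence (noted at the opening of the subsection) between admissibility for $M$ and for $Z \times D$; and theorem \ref{thm:invariants-generic}, applied with $(G, H, J) = (Z, D, \Gamma)$ to take $D$-smooth vectors.

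For part (1), I start from $V \in \Rep_{\rm u,ad}(G, \Gamma)$ and obtain $\Ord_P V \in \Rep_{\rm cts,ad}(M, \Gamma)$. Since $W$ is finite-dimensional and continuous, $\Hom_K(W, \Ord_P V)$ is, topologically, a finite direct sum of copies of $\Ord_P V$ carrying a diagonal $M \times \Gamma$-action; twisting by the finite-dimensional continuous $M$-representation $W'$ does not disturb admissibility, so this is again an admissible continuous $(M, \Gamma)$-representation, equivalently an admissible continuous $(Z \times D, \Gamma)$-representation. Theorem \ref{thm:invariants-generic}, applied with $Z$ in the role of $G$, $D$ in the role of $H$, and $\Gamma$ in the role of $J$, then yields
\[ \Hom_K(W, \Ord_P V)_{D-{\rm st.sm}} = \Hom_{D-{\rm st.sm}}(W, \Ord_P V) \in \Rep_{\rm cts,ad}(Z, D \times \Gamma). \]
For the unitary refinement, I would equip $\Hom_K(W, \Ord_P V)$ with the operator norm built from $M$-invariant norms on $W$ and on $\Ord_P V$; this is $M \times \Gamma$-invariant and restricts to an invariant norm on the closed $D$-smooth subspace.

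Part (2) follows exactly the same template, with $\Ord_P$ replaced by the Jacquet functor $J_P$ and $\Rep_{\rm cts,ad}$ by $\Rep_{\rm ess}$, using the product-group extension of $J_P$ already recorded in the text. One extra point needs verification: theorem \ref{thm:invariants-generic} in the essentially admissible case requires $Z(H)$ to be compact. Here $D$ is the group of $L$-points of the derived subgroup $\mathcal{D}$ of the Levi $\mathcal{M}$, which is semisimple, so $Z(\mathcal{D})$ is a finite algebraic group and $Z(D)$ is therefore finite, hence compact, and the hypothesis is met. The final identification $\Hom_{D-{\rm st.sm}}(W, J_P V) = \Hom_{\mathfrak{d}}(W, J_P V)$ is then an immediate application of Proposition \ref{prop:stsm-embedding}, since $J_P V$ is a locally analytic $D$-representation (essentially admissible representations are locally analytic) and $W$ is finite-dimensional locally analytic.

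No step looks genuinely hard. The two items requiring care are the compatibility of the diagonal $M$-action on $\Hom_K(W, \Ord_P V)$ with the $Z \times D$ decomposition (so that theorem \ref{thm:invariants-generic} applies cleanly in its stated form) and the compactness of $Z(D)$ in the essentially admissible case; both are handled by the remarks above. The rest is essentially bookkeeping.
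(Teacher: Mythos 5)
Your proposal is correct and follows exactly the route the paper intends: the product-group extensions of $\Ord_P$ and $J_P$, the equivalence between admissibility over $M$ and over $Z\times D$, Theorem \ref{thm:invariants-generic} with $(G,H,J)=(Z,D,\Gamma)$ (with $Z(D)$ finite, hence compact, in the essentially admissible case), and Proposition \ref{prop:stsm-embedding} for the identification with $\Hom_{\mathfrak{d}}$. This is precisely the combination the paper invokes, so nothing further is needed.
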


\section{Jacquet modules of admissible representations}

For any $V \in \Rep_{\rm ess}(G)$, proposition \ref{prop:weak-jacquet}(ii) gives us a copious supply of essentially admissible locally analytic representations of the torus $Z$: for any open compact $U \subseteq D$, and any finite-dimensional $M$-representation $W$, $\Hom_{U}(W, J_P V) = (W' \otimes_K J_P V)^U \in \Rep_{\rm ess}(Z)$. These correspond, by the equivalence of categories of \cite[2.3.2]{emerton-interpolation}, to coherent sheaves on the rigid space $\widehat Z$. For $V \in \Rep_{\rm ess}(Z)$, we will write $\Exp V$ for the support of the sheaf corresponding to $V$, a reduced rigid subspace of $\widehat Z$.

In this section, we'll prove two results describing the geometry of the rigid spaces $\Exp \Hom_U(W, J_P V)$, for $U \subseteq D$ open compact, under additional assumptions on $V$. These generalise the corresponding results in \cite{emerton-jacquet} when $P$ is a Borel subgroup.

\subsection{Compact maps}

We begin by generalising some results from \cite[\S 2.3]{emerton-jacquet} on compact endomorphisms of topological modules. Recall that a topological $K$-algebra is said to be of compact type if it can be written as an inductive limit of Banach algebras, with injective transition maps that are both algebra homomorphisms and compact as maps of topological $K$-vector spaces. If $A$ is such an algebra, then a topological $A$-module is said to be of compact type if it is of compact type as a topological $K$-vector space.

In this situation, we have the following definition of a compact morphism ({\it op.cit.}, def.~2.3.3):

\begin{definition}\label{defn:compact} A continuous $A$-linear morphism $\phi: M \to N$ between compact type topological $A$-modules is said to be {\bf $A$-compact} if there is a commutative diagram
 \begin{equation}\xymatrix{
  M \ar[rr]^\phi \ar[dr]^\alpha & & N\\
  & N_1 \ar[ur]^\beta & \\
  V \ar[ur]^\gamma \ar@/_2pc/@{-->}[uurr]
 }\end{equation}
where $N_1$ is a compact type topological $A$-module, $\alpha$ and $\beta$ are continuous $A$-linear maps, $V$ is a compact type $K$-vector space, and $\gamma$ is a continuous $K$-linear map for which $A \hotimes{K} V \to N_1$ is surjective, and the composite dashed arrow is compact as a map of compact type $K$-vector spaces.
\end{definition}

\begin{lemma}\label{lemma:compact-product}
 If $M$ is a compact type module over a compact type topological $K$-algebra $A$; $\phi: M \to M$ is an $A$-compact map; $N$ is a finitely-generated module over a finite-dimensional $K$-algebra $B$; and $\psi: N \to N$ is $K$-linear, then the map $\phi \otimes \psi: M \otimes_K N \to M \otimes_K N$ is $(A \otimes_K B)$-compact.
\end{lemma}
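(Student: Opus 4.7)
The plan is to tensor every ingredient of the $A$-compact factorization of $\phi$ with $N$ over $K$; since $B$ and $N$ are finite-dimensional over $K$, all completed tensor products that arise collapse to finite direct sums, and no analytic subtleties intervene.

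Concretely, I would unfold the data of Definition~\ref{defn:compact}: a compact-type $A$-module $N_1$, a compact-type $K$-space $V$, and maps $\alpha: M \to N_1$, $\beta : N_1 \to M$, $\gamma: V \to N_1$, with $\phi = \beta \circ \alpha$ and $A \hotimes{K} V \twoheadrightarrow N_1$, such that the composite $\beta \circ \gamma: V \to M$ (the dashed arrow of the diagram) is compact as a $K$-linear map. I would then propose the factorization
\[
 M \otimes_K N \xrightarrow{\,\alpha \otimes \mathrm{id}_N\,} N_1 \otimes_K N \xrightarrow{\,\beta \otimes \psi\,} M \otimes_K N
\]
of $\phi \otimes \psi$, set $V' := V \otimes_K N$ and $\gamma' := \gamma \otimes \mathrm{id}_N$, and verify the three conditions of the definition for the evident $(A \otimes_K B)$-module structure on $N_1 \otimes_K N$.

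The verification is then essentially routine. For compact-typeness, $N_1 \otimes_K N$ and $V \otimes_K N$ are finite direct sums of copies of $N_1$ and $V$ respectively, with the natural $(A \otimes_K B)$- (resp.\ $K$-)module structure. For surjectivity of
\[
 (A \otimes_K B) \hotimes{K} (V \otimes_K N) \longrightarrow N_1 \otimes_K N,
\]
the finite-dimensionality of $B$ and $N$ over $K$ reduces the left-hand side to a finite direct sum of copies of $A \hotimes{K} V$, and the surjection follows from the hypothesis $A \hotimes{K} V \twoheadrightarrow N_1$. Finally, the composite $(\beta \otimes \psi) \circ (\gamma \otimes \mathrm{id}_N) = (\beta\gamma) \otimes \psi$ is the tensor product of a compact $K$-linear map with a $K$-linear endomorphism of a finite-dimensional space, hence again compact as a map of compact-type $K$-vector spaces.

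The only mildly delicate point is keeping careful track of topologies when moving between algebraic and completed tensor products involving the finite-dimensional factors $B$ and $N$; but once one observes that for a finite-dimensional $K$-space $W$ one has $X \hotimes{K} W = X \otimes_K W$ with the evident product topology for any compact-type $X$, all the identifications above are formal, and there is no real obstacle.
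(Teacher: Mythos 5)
This is correct and is essentially the paper's proof: the paper likewise tensors every space and map in the witnessing diagram of Definition~\ref{defn:compact} with $N$, except that it first reduces to $\psi=\mathrm{id}_N$ by citing the stability of $A$-compact maps under composition with continuous $A$-linear maps (Emerton, \emph{op.~cit.}, 2.3.4(i)), whereas you absorb $\psi$ into the leg $\beta\otimes\psi$ and check compactness of $(\beta\gamma)\otimes\psi$ directly. The one point to be aware of --- that $\beta\otimes\psi$ (like $\phi\otimes\psi$ itself) is a morphism of $(A\otimes_K B)$-modules only when $\psi$ commutes with the $B$-action on $N$ --- is an imprecision already present in the statement of the lemma rather than in your argument, and is harmless since the lemma is only ever applied with $\psi=\mathrm{id}$.
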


\begin{proof} We may assume without loss of generality that $\psi$ is the identity, by \cite[2.3.4(i)]{emerton-jacquet}. This case follows immediately by tensoring each of the spaces in the diagram with $N$.
\end{proof}

\begin{lemma}\label{lemma:compact-descent}
 Let $\sigma: A \to A'$ be a finite morphism of compact type topological $K$-algebras, and $\phi: M \to N$ a morphism of topological $A'$-modules which is $A'$-compact. Then $\phi$ is $A$-compact.
\end{lemma}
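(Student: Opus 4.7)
The plan is to modify the diagram witnessing $A'$-compactness of $\phi$ into one witnessing $A$-compactness. Given the data $(N_1, \alpha, \beta, V, \gamma)$ of Definition \ref{defn:compact} for the $A'$-compact case, viewing everything through $\sigma \colon A \to A'$ immediately makes $N_1$ a compact type $A$-module and $\alpha, \beta$ continuous $A$-linear maps, while $V$, $\gamma$, and the compactness of the composite $\beta \circ \gamma \colon V \to N$ persist unchanged. The single condition that is not automatically inherited is the surjectivity of the scalar extension: $A \hotimes{K} V \to N_1$ need not be surjective even though its $A'$-linear extension is. So the task reduces to enlarging $(V, \gamma)$ to remedy this one defect without destroying the compactness of the composite to $N$.

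First I would use the finiteness of $\sigma$ to pick $A$-module generators $a'_1, \ldots, a'_n$ of $A'$, replace $V$ by $V' := V^{\oplus n}$ (still a compact type $K$-space), and define
\[ \gamma' \colon V' \to N_1, \qquad (v_1, \ldots, v_n) \longmapsto \sum_{i=1}^n a'_i \, \gamma(v_i). \]
Then I would check that the new composite $\beta \circ \gamma' \colon V' \to N$ is still compact: by the $A'$-linearity of $\beta$ it equals $(v_1, \ldots, v_n) \mapsto \sum_i a'_i \,(\beta \circ \gamma)(v_i)$, a finite sum of compositions of the compact map $\beta \circ \gamma$ with the continuous multiplication-by-$a'_i$ endomorphisms of $N$, and a finite sum of compact maps is compact.

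The main obstacle is then the surjectivity of the $A$-linear extension $A \hotimes{K} V' \to N_1$. Algebraically this is easy: if $a' = \sum_i c_i a'_i$ with $c_i \in A$, then $a' \gamma(v)$ is the image of $\sum_i c_i \otimes (0, \ldots, v, \ldots, 0)$ (with $v$ in the $i$-th slot) under the $A$-linear extension of $\gamma'$, so the uncompleted image of $A \otimes V' \to N_1$ contains the uncompleted image of the $A'$-linear extension of $\gamma$. To upgrade this in the completed setting I would use that the $A$-linear surjection $A^{\oplus n} \twoheadrightarrow A'$ coming from the generators is strict (its domain and target are compact type and the target is Hausdorff, so the open mapping theorem for LB-spaces applies), whence completed $K$-tensoring with $V$ preserves the surjection and gives $(A \hotimes{K} V)^{\oplus n} = A \hotimes{K} V' \twoheadrightarrow A' \hotimes{K} V$; composing with the given surjection $A' \hotimes{K} V \twoheadrightarrow N_1$ then yields the required surjectivity of $A \hotimes{K} V' \to N_1$, completing the $A$-compactness witness.
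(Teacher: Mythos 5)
Your proposal is correct and follows essentially the same route as the paper: choose $A$-module generators of $A'$, replace $V$ by $V^{\oplus n}$ with $\gamma'(v_1,\dots,v_n)=\sum a'_i\gamma(v_i)$, and use $A'$-linearity of $\beta$ to see the new composite is a finite sum of compact maps. The only difference is that you spell out the surjectivity of $A \hotimes{K} V^{\oplus n} \to N_1$, which the paper dismisses as clear.
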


\begin{proof} By assumption, we have a diagram as in lemma \ref{defn:compact}, where the map $A' \hotimes{K} V \to N_1$ is surjective. Let $a_1, \dots, a_k$ be a set of elements generating $A'$ as an $A$-module, let $V' = V^k$, and define the map $\gamma' : V' \to N_1$ by $(v_1, \dots, v_k) \mapsto \sum a_i \gamma(v_i)$. 

Then it is clear that $1 \hotimes{} \gamma'$  gives a surjection $A \hotimes{K} V^k \to N_1$. Furthermore, the composite $\phi \circ \gamma': V' \to N$ is the map $(v_1, \dots, v_k) \mapsto \sum \beta (a_i \gamma(v))$. As $\beta$ is a morphism of $A'$-modules, this equals $\sum a_i (\beta \circ \gamma)(v)$, which is clearly compact (since $\beta \circ \gamma$ is). So the map $\gamma': V' \to N_1$ witnesses $\phi$ as an $A$-compact map.
\end{proof}

\subsection{Twisted distribution algebras}

Let $L$ be a finite extension of $\mathbb{Q}_p$, and $G$ a locally $L$-analytic group. Let $(H_n)_{n \ge 0}$ be a decreasing sequence of good $L$-analytic open subgroups of $G$, in the sense of \cite[\S 5.2]{emerton-memoir}, such that
\begin{itemize}
 \item the subgroups $H_n$ form a basis of neighbourhoods of the identity in $G$;
 \item $H_n$ is normal in $H_0$ for all $n$;
 \item the inclusion $H_{n+1} \hookrightarrow H_n$ extends to a morphism of rigid spaces between the underlying affinoid rigid analytic groups $\mathbb{H}_{n+1} \hookrightarrow \mathbb{H}_n$, which is relatively compact.
\end{itemize}

Such a sequence certainly always exists, since the choice of $H_0$ determines a Lie $\mathcal{O}_L$-lattice $\mathfrak{h}$ in the Lie algebra of $G$, and we may take $H_n$ to be the subgroup attached to the sublattice $\pi^n \mathfrak{h}$. We may use this sequence to write the topological $K$-algebra $A := \Dla(H_0) = \Cla(H_0)'_b$ as an inverse limit of the spaces $A_n := D(\mathbb{H}^\circ_n, H_0) = \left[C(H_0)_{\mathbb{H}^\circ_n-{\rm an}}\right]'_b$. For all $n$, $A_n$ is a compact type topological $K$-algebra, and the sequence $(A_n)_{n \ge 0}$ is a weak Fr\'echet-Stein structure on $A$.

We begin with a construction related to the ``untwisting isomorphism'' of \cite[3.2.4]{emerton-memoir}. Let $(\rho, W)$ be any finite-dimensional $K$-representation of $H_0$, and let $E = \End_K W$. We consider the following commutative diagram of $K$-vector spaces:
\begin{equation}\label{eq:untwist}
 \xymatrix{
& K[H_0] \otimes_K E \ar[dd]^*{\substack{g \otimes m \mapsto \\ g \otimes \rho(g) m}}_{\gamma} \\
K[H_0] \ar[ur]^{g \mapsto g \otimes 1}_{\alpha} \ar[dr]_{g \mapsto g \otimes \rho(g)}^{\beta} & \\
& K[H_0] \otimes_K E 
 }
\end{equation}

Here $\alpha$ and $\beta$ are ring homomorphisms, and although $\gamma$ is not a ring homomorphism, it satisfies the relation $\gamma(\alpha(x) y) = \beta(x) \gamma(y)$, so it intertwines the two $K[H_0]$-module structures on $K[H_0] \otimes_K E$ given by $\alpha$ and $\beta$. Furthermore $\gamma$ is clearly invertible. 

We now assume that $(\rho, W)$ is continuous (when $W$ is equipped with its unique Hausdorff locally convex topology). This implies that $W$ is locally analytic (since locally analytic vectors must be dense in $W$, by \cite[7.1]{ST-admissible}, but the topology of $W$ is the finest locally convex topology). Hence there is an integer $n(\rho)$ such that $W_{\mathbb{H}^\circ_n-{\rm an}} = W$ for all $n \ge n(\rho)$.

\begin{proposition} Let $n \ge n(\rho)$. Then there exist unique continuous maps $\alpha_n, \beta_n: A_n \to A_n \otimes_K \End(W)$ and $\gamma_n: A_n \otimes_K \End(W) \stackrel\sim\to A_n \otimes_K \End W$ extending the maps $\alpha$, $\beta$, $\gamma$ above.
\end{proposition}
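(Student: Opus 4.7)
The plan is to exploit the density of $K[H_0]$ in $A_n$: since the target spaces are Hausdorff, any continuous extension of $\alpha$, $\beta$, $\gamma$ is automatically unique, so the task is to exhibit such continuous extensions. The map $\alpha$ extends trivially, since it is simply $\mu \mapsto \mu \otimes 1_E$ at the level of Dirac distributions, and tensoring with the identity on the finite-dimensional space $E$ is manifestly continuous. The work is therefore entirely concentrated in constructing $\beta_n$ and $\gamma_n$.

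For $\beta_n$ and $\gamma_n$, I would argue by duality. Set $C_n = C(H_0)_{\mathbb{H}_n^\circ\text{-an}}$, so by definition $A_n = (C_n)'_b$, and because $E$ is finite-dimensional one has a natural identification $A_n \otimes_K E \cong (C_n \otimes_K E^*)'_b$. It then suffices to produce continuous $K$-linear maps
\[ \beta^* : C_n \otimes_K E^* \to C_n, \qquad \gamma^* : C_n \otimes_K E^* \to C_n \otimes_K E^*, \]
whose transposes extend $\beta$ and $\gamma$ respectively, given on elementary tensors by
\[ \beta^*(f \otimes \phi)(g) = f(g)\, \phi(\rho(g)), \qquad \gamma^*(f \otimes \phi)(g) = f(g) \cdot (\rho(g)^{\vee} \phi), \]
where $\rho(g)^{\vee}$ denotes the transpose of $\rho(g)$ acting on $E^*$. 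Pairing against a Dirac distribution $\delta_g$ then shows that the resulting $\beta_n, \gamma_n$ do restrict to $\beta, \gamma$ on $K[H_0]$.

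Continuity of $\beta^*$ and $\gamma^*$ is where the hypothesis $n \ge n(\rho)$ enters: it says precisely that each matrix coefficient $g \mapsto \phi(\rho(g) e)$ lies in $C_n$. Then $\beta^*$ and $\gamma^*$ are described, in a fixed basis of $E$, as finite linear combinations of multiplication operators on $C_n$ by elements of $C_n$ itself. Since $C_n$ is a Banach algebra under its own multiplication, these multiplication operators are continuous, and so $\beta^*, \gamma^*$ are continuous (and have image in the algebraic tensor product $C_n \otimes E^*$, not a completion, because $E^*$ is finite-dimensional). For invertibility of $\gamma_n$, the same construction applied with $\rho^{-1}$ in place of $\rho$ produces a continuous inverse on the function side, since inversion in $\mathrm{GL}(W)$ is analytic and thus $\rho^{-1}$ is also $\mathbb{H}_n^\circ$-analytic for $n \ge n(\rho)$; transposing gives a two-sided inverse to $\gamma_n$.

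The only mild obstacle is bookkeeping: one must check that the formulas above really define maps into $C_n$ and $C_n \otimes E^*$ rather than into completions, and that the transpose of $\gamma^*$ is an isomorphism. Both points are settled by working in a fixed basis of $E$ and $E^*$, which reduces everything to elementary continuity statements for multiplication in the Banach algebra $C_n$, so no further technical difficulty is expected.
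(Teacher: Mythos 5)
Your proof is correct and follows essentially the same route as the paper: dualize to the function space $C(H_0)_{\mathbb{H}_n^\circ\text{-an}}$, write the transposed maps explicitly, observe that the matrix coefficients of $\rho$ are $\mathbb{H}_n^\circ$-analytic precisely because $n \ge n(\rho)$, and conclude continuity from multiplication by fixed analytic functions (the paper leaves this as ``manifestly''). The only slip is calling $C_n$ a Banach algebra --- since $A_n = D(\mathbb{H}_n^\circ, H_0)$ is of compact type, $C_n$ is a nuclear Fr\'echet algebra rather than Banach --- but continuity of multiplication by a fixed element holds there too, so the argument is unaffected.
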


\begin{proof}
Taking the (algebraic) $K$-dual of the diagram \eqref{eq:untwist}, we have a diagram 
\[\xymatrix{
& \mathcal{F}(H_0, E') \ar[dl]^{\alpha'} \\
\mathcal{F}(H_0, K)   & \\
& \mathcal{F}(H_0, E') \ar[uu]^{\gamma'} \ar[ul]_{\beta'}
 }
\]
where for $K$-vector space $V$, $\mathcal{F}(H_0, V)$ indicates the $K$-vector space of arbitrary functions $H_0 \to V$. One finds that for a function $f: H_0 \to E'$, we have $\alpha'(f)(m) = f(m)(1)$ and $\beta'(f)(m) = f(m)(\rho(m))$, while $\gamma'(f)(m) = x \mapsto f(\rho(m) x)$. All of these maps manifestly preserve the subspaces of $\mathbb{H}^\circ_n$-analytic functions for $n \ge n(\rho)$, and are continuous for the natural topologies of these subspaces; so there are corresponding maps between the duals of these subspaces, as required.
\end{proof}

\begin{corollary}
For each $n \ge n(\rho)$, the map $\beta_n$ makes $B_n = A_n \otimes_K \End W$ a finitely-generated topological $A_n$-module, and the natural map $B_{n+1} \to B_n$ induces an isomorphism $A_n \hotimes{A_{n+1}} B_{n+1} \stackrel{\sim}{\to} B_n$.
\end{corollary}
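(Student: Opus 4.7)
The plan is to deduce everything from the untwisting isomorphism $\gamma_n$, which transports the evident finite generation of $B_n$ with respect to the $\alpha_n$-module structure over to the $\beta_n$-module structure. The essential observation is that $\gamma_n$ is not $A_n$-linear for either structure, but it does intertwine them: the algebraic identity $\gamma(\alpha(x) y) = \beta(x) \gamma(y)$ from diagram \eqref{eq:untwist} extends by continuity and density of $K[H_0]$ in $A_n$ to the relation $\gamma_n(\alpha_n(a) b) = \beta_n(a) \gamma_n(b)$ for all $a \in A_n$, $b \in B_n$.

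For the first assertion, note that with respect to $\alpha_n$ the module $B_n = A_n \otimes_K E$ is visibly finitely generated over $A_n$: any $K$-basis $e_1, \dots, e_d$ of $E$ yields a set of topological generators, since $\alpha_n(a)(1 \otimes e_i) = a \otimes e_i$. Thus $\gamma_n(1 \otimes e_1), \dots, \gamma_n(1 \otimes e_d)$ will generate $B_n$ topologically with respect to $\beta_n$, provided that $\gamma_n$ is a topological isomorphism (not merely a continuous bijection). But the inverse map is given on $K[H_0] \otimes_K E$ by $g \otimes m \mapsto g \otimes \rho(g^{-1}) m$, and since $g \mapsto \rho(g^{-1})$ is a locally analytic map $H_0 \to \End W$ defined on the same analytic subgroups, applying the construction of the preceding proposition to this map produces a continuous extension which must coincide with $\gamma_n^{-1}$.

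For the coherence assertion, it is convenient to treat first the $\alpha$-structure. Since $\alpha_n$ arises from the structure map $A_n \to A_n \otimes_K E = B_n$, we have $B_n^{\alpha} = A_n \otimes_K E$ as $A_n$-modules, and the transition map $B_{n+1}^{\alpha} \to B_n^{\alpha}$ is just the transition map $A_{n+1} \to A_n$ tensored over $K$ with the finite-dimensional space $E$. The base-change isomorphism $A_n \hotimes{A_{n+1}} B_{n+1}^{\alpha} \stackrel{\sim}{\to} B_n^{\alpha}$ is then formal, using that $E$ is finite-dimensional over $K$ and the fact that the weak Fréchet--Stein structure is flat on finite free modules. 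To transfer this to the $\beta$-structure, one uses that the construction of $\alpha_n$, $\beta_n$, $\gamma_n$ is compatible with the restriction maps $A_{n+1} \to A_n$; this follows by uniqueness of the continuous extension, or equivalently by inspecting that the dual maps $\alpha'$, $\beta'$, $\gamma'$ on functions manifestly respect $\mathbb{H}_n^\circ$-analyticity for all $n \ge n(\rho)$. The intertwining identity then shows that $\gamma_n$ identifies the $A_n$-module $(B_n, \alpha_n)$ with $(B_n, \beta_n)$, compatibly in $n$, so the base-change isomorphism for $\alpha$ transports to one for $\beta$.

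The main subtle point I expect is the topological isomorphy of $\gamma_n$ — once that is in hand, everything else is formal manipulation of the intertwining identity and of completed tensor products with finite free modules. A secondary nuisance is verifying that the finitely many elements $\gamma_n(1 \otimes e_i)$ generate $B_n$ \emph{topologically} under $\beta_n$, but this is immediate from the fact that $\gamma_n$ is a homeomorphism and sends the $\alpha_n$-generators to the $\beta_n$-generators.
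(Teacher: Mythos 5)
Your proposal is correct and follows the same route as the paper: the statement is immediate for the $\alpha_n$-module structure, and the untwisting isomorphisms $\gamma_n$, being compatible with the transition maps $B_{n+1} \to B_n$ and intertwining the two module structures, transport both finite generation and the base-change isomorphism to the $\beta_n$-structure. The only difference is that you supply details (density of $K[H_0]$ in $A_n$, the explicit inverse of $\gamma_n$) that the paper leaves implicit, the invertibility of $\gamma_n$ already being part of the preceding proposition.
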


\begin{proof}
This is clearly true for the $A_n$-module structure on $B_n$ given by $\alpha_n$, so it follows for the $\beta_n$-structure (since the untwisting isomorphisms $\gamma_n$ and $\gamma_{n+1}$ are compatible with the map $B_{n+1} \to B_n$).
\end{proof}

\begin{proposition}
 Let $n \ge n(\rho)$ and let $X$ be a compact type topological $A_n$-module. Then the diagonal $H_0$-action on $X \otimes_K W$ extends to a topological $A_n$-module structure. Moreover, if $n \ge n(\rho) + 1$, we have an isomorphism of topological $A_{n-1}$-modules
\[ A_{n-1} \hotimes{A_{n}} (X \otimes_K W) \stackrel\sim\to (A_{n-1} \hotimes{A_n} X) \otimes_K W.\]
\end{proposition}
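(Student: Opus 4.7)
For the first assertion, note that $X \otimes_K W$ carries a natural jointly continuous action of $B_n = A_n \otimes_K E$, with $A_n$ acting on $X$ and $E = \End_K(W)$ acting on $W$; since $W$ is finite-dimensional, $X \otimes_K W$ is of compact type, and this $B_n$-action is continuous. Pulling back along the continuous algebra homomorphism $\beta_n : A_n \to B_n$ constructed above then gives a continuous $A_n$-action on $X \otimes_K W$. Because $\beta_n(\delta_g) = \delta_g \otimes \rho(g)$ for every $g \in H_0$, this action restricts to the diagonal $H_0$-action on Dirac distributions, and so is the unique continuous extension of it to an $A_n$-action.

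For the base change identity, the strategy is to factor the tensor product through $B_n$. Since the $A_n$-action on $X \otimes_K W$ factors through $\beta_n : A_n \to B_n$, the change-of-rings formula gives a canonical isomorphism
\[
A_{n-1} \hotimes{A_n} (X \otimes_K W) \;\cong\; \bigl(A_{n-1} \hotimes{A_n} B_n\bigr) \hotimes{B_n} (X \otimes_K W),
\]
where throughout $B_n$ is viewed as an $A_n$-module via $\beta_n$. The hypothesis $n \ge n(\rho)+1$ ensures that the preceding corollary applies at level $n-1$, providing an isomorphism $A_{n-1} \hotimes{A_n} B_n \stackrel{\sim}{\to} B_{n-1}$. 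Substituting, the right-hand side becomes
\[
B_{n-1} \hotimes{B_n}(X \otimes_K W) \;=\; (A_{n-1} \otimes_K E) \hotimes{A_n \otimes_K E} (X \otimes_K W),
\]
and since the $B_n$-action on $X \otimes_K W$ splits cleanly as an $A_n$-action on $X$ tensored with an $E$-action on $W$, the finite-dimensional $E$-factor collapses to leave $(A_{n-1} \hotimes{A_n} X) \otimes_K W$. Concatenating these identifications gives the required isomorphism, and tracking the actions through the chain shows that the base-changed $\beta_n$-structure on the left matches the diagonal $\beta_{n-1}$-structure supplied by the first part on the right.

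\textbf{Main obstacle.} The technical issue I anticipate is justifying that the change-of-rings formula and the collapse of the $E$-factor hold at the level of completed tensor products of compact type modules, not merely for abstract $K$-vector spaces. This should reduce to standard properties of $\hotimes{}$ combined with the finite generation of $B_n$ as an $A_n$-module (provided by the corollary) and the finite-dimensionality of $E$ over $K$; the point is that these finiteness conditions prevent any delicate flatness or completion issues from arising, so the algebraic manipulations carry over directly to the topological setting.
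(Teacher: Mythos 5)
Your proposal is correct and follows essentially the same route as the paper: the $A_n$-structure is obtained by pulling back the evident $B_n = A_n \otimes_K \End W$ action along $\beta_n$, and the base-change isomorphism is the same chain $A_{n-1} \hotimes{A_n}(X \otimes_K W) \cong (A_{n-1}\hotimes{A_n} B_n)\hotimes{B_n}(X\otimes_K W) \cong B_{n-1}\hotimes{B_n}(X\otimes_K W) \cong (A_{n-1}\hotimes{A_n}X)\otimes_K W$ using the preceding corollary. The finiteness observations you flag as the "main obstacle" are exactly what makes the topological manipulations unproblematic, as in the paper.
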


\begin{proof}
 We clearly have commuting, $K$-linear, continuous actions of $A_n$ and $\End W$ on $X \otimes_K W$, so we obtain an action of $A_n \otimes_K \End W$. Pulling back via the map $\beta_n$, we obtain an $A_n$-module structure, which clearly restricts to the diagonal action of $H_0$. The isomorphism follows from the last statement of the preceding corollary via the associativity of the tensor product, since 
\begin{align*}
&A_{n-1} \hotimes{A_{n}} (X \otimes_K W) \\
=& (A_{n-1} \hotimes{A_{n}} B_{n}) \hotimes{B_{n}} (X \otimes_K W)\\
=& B_{n-1} \hotimes{B_n} (X \otimes_K W)\\
=& (A_{n-1} \hotimes{A_n} X) \otimes_K W.\qedhere
\end{align*}
\end{proof}

\subsection{Twisted Jacquet modules}

We now return to the situation considered above, so $G$ is the group of $L$-points of a reductive algebraic group $\mathcal{G}$ over $L$ as above, with $P$ a parabolic subgroup, $M$ a Levi subgroup of $P$, $N$ the unipotent radical, and $Z = Z(M)$. We choose a sequence $(H_n)_{n \ge 0}$ of good $L$-analytic open subgroups of $G$ admitting rigid analytic Iwahori decompositions $\mathbb{H}_n = \mathbb{N}_n \times \mathbb{M}_n \times \mathbb{N}_n$, as in \cite[4.1.6]{emerton-jacquet}. We also impose the additional condition that $\mathbb{M}_n = \mathbb{Z}_n \times \mathbb{D}_n$ where $\mathbb{Z}_n$ and $\mathbb{D}_n$ are the affinoid subgroups underlying good analytic open subgroups of $Z$ and of $D = M^{ss}$; it is clear that we can always do this (by exactly the same method as in Emerton's case). We let $Z^{+}$ be the submonoid $\{z \in Z(M) : z N_0 z^{-1} \subseteq N_0\}$ of $Z$.

Our starting point is the following, which is part of the proof of \cite[4.2.23]{emerton-jacquet}:

\begin{proposition}
Let $V$ be an admissible locally analytic representation of $G$. Then for all $n \ge 0$, the action of $M_0 \times Z^+$ on the space
\[ U_n = \left( D(\mathbb{H}^\circ_n, H_0) \hotimes{\Dla(H_0)} V'_b \right)_{N_0}\]
extends to an $A_n[Z^+]$-module structure. Moreover, the transition map $A_n \hotimes{A_{n+1}} U_{n+1} \to U_n$ is $A_n$-compact and $Z^+$-equivariant, and there is some $z \in Z^{+}$ (independent of $n$) such that there exists a map $\alpha: U_n \to A_{n} \hotimes{A_{n+1}} U_{n+1}$ making the following diagram commute:
\begin{equation}\label{eq:link}
 \xymatrix{
 A_n \hotimes{A_{n+1}} U_{n+1} \ar[r] \ar[d]^{\mathrm{id} \hotimes{} z} & U_n \ar[d]^z \ar@{-->}[ld]^\alpha\\
 A_n \hotimes{A_{n+1}} U_{n+1} \ar[r] & U_n
.}
\end{equation}
\end{proposition}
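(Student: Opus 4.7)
The plan is to adapt essentially verbatim the argument Emerton carries out in the proof of Proposition 4.2.23 of \cite{emerton-jacquet} for the case of a Borel subgroup. Inspection of that proof shows that the only property of the split torus genuinely used is the existence of a submonoid contracting $N_0$; since $Z = Z(M)$ fulfils precisely this role in the parabolic case, the adaptation is mechanical once one carries along the extra factor $D$.

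First I would exploit the rigid analytic Iwahori decomposition $\mathbb{H}_n = \overline{\mathbb{N}}_n \times \mathbb{M}_n \times \mathbb{N}_n$ to obtain a topological isomorphism of $A_n$ with the completed tensor product of distribution algebras
\[ D(\overline{\mathbb{N}}_n^\circ, \overline{N}_0) \hotimes{K} D(\mathbb{M}_n^\circ, M_0) \hotimes{K} D(\mathbb{N}_n^\circ, N_0). \]
Admissibility of $V$ makes $V'_n := A_n \hotimes{\Dla(H_0)} V'_b$ a finitely generated $A_n$-module, and passing to $N_0$-coinvariants is the same as quotienting by the ideal generated by the augmentation ideal of the rightmost factor. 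The first two factors, whose tensor product is naturally a quotient of $A_n$, therefore act on $U_n$, which yields the desired $A_n$-module structure whose restriction to $M_0$ coincides with the tautological one.

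For the $Z^+$-action, the hypothesis $z N_0 z^{-1} \subseteq N_0$ ensures that the dual $G$-action of $z \in Z^+$ on $V'_n$ descends through $N_0$-coinvariants, and since $z$ centralises $M$ the resulting operator commutes with the $A_n$-structure, producing the $A_n[Z^+]$-module structure. The relative compactness of $\mathbb{H}_{n+1} \into \mathbb{H}_n$ yields a compact continuous map $A_{n+1} \to A_n$; transferring this to the module side via finite generation of $V'_n$ together with Lemmas \ref{lemma:compact-product} and \ref{lemma:compact-descent} gives the $A_n$-compactness of the transition map $A_n \hotimes{A_{n+1}} U_{n+1} \to U_n$, and its $Z^+$-equivariance is then formal.

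The heart of the argument, and the main obstacle, is the construction of the link map $\alpha$. The essential geometric input is that for $z \in Z^+$ chosen deep enough in the positive cone, conjugation by $z$ strictly contracts $\mathbb{N}_n^\circ$ inside $\mathbb{N}_{n+1}^\circ$ and symmetrically sends $\overline{\mathbb{N}}_{n+1}^\circ$ into $\overline{\mathbb{N}}_n^\circ$; since $z$ acts trivially on $\mathbb{M}_n^\circ$ by conjugation, this makes the $z$-action on $V'_n$ factor through the more refined algebra $A_{n+1}$, after composing with the conjugation twist. Descending this factorisation to $N_0$-coinvariants produces the required $\alpha : U_n \to A_n \hotimes{A_{n+1}} U_{n+1}$. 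Verifying well-definedness (independence of the chosen lift) and commutativity of the diagram is a careful bookkeeping with the Iwahori decomposition; because $z$ is central in $M$, all of the subtlety is confined to its adjoint action on $N$ and $\overline{N}$ and is entirely insensitive to whether $M$ itself is commutative, which is why Emerton's Borel-case argument transfers without essential change.
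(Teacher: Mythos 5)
The paper offers no proof of this proposition at all: it is presented as a direct quotation, ``part of the proof of [Eme06a, Prop.\ 4.2.23]'', and that proof of Emerton is exactly the argument you reconstruct (and your observation that it only uses the contracting action of $Z(M)^+$ on $N$ and $\overline{N}$, never commutativity of the Levi, is precisely why the authors can cite it verbatim). So your approach is the paper's approach.

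One notational point should be corrected before your sketch matches the statement being proved. In this section $A_n$ denotes $D(\mathbb{M}_n^\circ, M_0)$, the distribution algebra of the Levi (the abstract group ``$G$'' of the preceding subsection on twisted distribution algebras is here instantiated as $M$); it is \emph{not} $D(\mathbb{H}_n^\circ, H_0)$, as your expression $V'_n = A_n \hotimes{\Dla(H_0)} V'_b$ suggests. In particular the asserted $A_n$-compactness is compactness relative to the Levi direction only: in Emerton's argument it is supplied by the compact transition maps $D(\overline{\mathbb{N}}_{n+1}^\circ, \overline{N}_0) \to D(\overline{\mathbb{N}}_n^\circ, \overline{N}_0)$ together with finite generation of $U_n$ over $D(\overline{\mathbb{N}}_n^\circ, \overline{N}_0) \hotimes{K} A_n$. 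Your route --- compactness of $D(\mathbb{H}_{n+1}^\circ, H_0) \to D(\mathbb{H}_n^\circ, H_0)$ plus finite generation, giving $K$-compactness of $U_{n+1} \to U_n$ --- also works, since taking $V = U_{n+1}$ as the witness in Definition~\ref{defn:compact} then exhibits $A_n \hotimes{A_{n+1}} U_{n+1} \to U_n$ as $A_n$-compact; but Lemmas~\ref{lemma:compact-product} and~\ref{lemma:compact-descent} play no role here, entering only in the next proposition for the twisted modules $\tilde U_n = U_n \otimes_K W$.
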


We now let $\tilde U_n = U_n \otimes_K W$, where $(W, \rho)$ is a fixed, finite-dimensional, continuous representation of $M$. By the last proposition of the preceeding section, we have a diagonal $A_n$-module structure on $\tilde U_n$, and there is also a diagonal action of $Z^+$ on $\tilde U_n$ commuting with the $M_0$-action.

\begin{proposition} For any $n \ge n(\rho)$ the following holds:
\begin{itemize}
 \item $\tilde U_n$ is a compact type topological $A_n$-module, and the action of $Z^+$ is $A_n$-linear.
 \item There is an $A_{n+1}[Z^+]$-linear map $U_{n+1} \to U_n$ such that the induced map $A_n \hotimes{A_{n+1}} \tilde U_{n+1} \to \tilde U_n$ is $A_n$-compact.
 \item For any good $z \in Z^+$, we can find a map $\tilde \alpha: U_n \to A_n \hotimes{A_{n+1}} \tilde U_{n+1}$ such that the diagram corresponding to \eqref{eq:link} commutes.
\end{itemize}
Also, the direct limit $\varprojlim U_n$ (with respect to the transition maps above) is isomorphic as a topological $A[Z^+]$-module to $(V^{N_0} \otimes W')'_b$.
\end{proposition}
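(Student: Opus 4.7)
The plan is to reduce each of the three bulleted assertions to the corresponding statement for the untwisted modules $U_n$ (supplied by the preceding proposition) via the twisted-distribution-algebra formalism of the previous subsection. That $\tilde U_n = U_n \otimes_K W$ is of compact type is immediate from the finite-dimensionality of $W$, and the diagonal $A_n$-module structure on it (that is, the $\beta_n$-pullback of the obvious $A_n \otimes_K \End W$-action) is supplied by the last proposition of the preceding subsection, applied with $X = U_n$. For the $A_n$-linearity of the $Z^+$-action on $\tilde U_n$, I would argue that this already holds on $U_n$ by hypothesis, while on $W$ the element $\rho(z)$ commutes with $\rho(h)$ for $h$ in the Levi factor of the Iwahori decomposition of $H_n$ because $z \in Z(\mathcal{M})$; the unipotent factors act trivially in the twist, and density of $K[H_0]$ in $A_n$ passes the commutation from Dirac measures to the full algebra.

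For the second assertion, I would tensor the $A_{n+1}[Z^+]$-linear transition map $U_{n+1} \to U_n$ of the preceding proposition with $\mathrm{id}_W$. The resulting map $A_n \hotimes{A_{n+1}} \tilde U_{n+1} \to \tilde U_n$ is the tensor product over $K$ of the $A_n$-compact map $A_n \hotimes{A_{n+1}} U_{n+1} \to U_n$ with $\mathrm{id}_W$. Lemma \ref{lemma:compact-product}, applied with $B = \End_K W$ and $\psi = \mathrm{id}_W$, then yields $B_n$-compactness (where $B_n = A_n \otimes_K \End_K W$ carries the $\beta_n$-structure), and the corollary in the twisted-distribution-algebra subsection shows that $\beta_n$ makes $B_n$ a finitely generated $A_n$-module, so Lemma \ref{lemma:compact-descent} delivers the required $A_n$-compactness. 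For the third assertion, I would set $\tilde\alpha = \alpha \otimes \mathrm{id}_W$; commutativity of both triangles in the twisted analogue of \eqref{eq:link} then reduces, by unwinding the definitions, to commutativity of the corresponding diagram for $U$, which is already given.

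For the closing identification (reading the ``direct limit $\varprojlim U_n$'' as the inverse limit $\varprojlim_n \tilde U_n$), I plan to observe that $\varprojlim_n U_n \cong (V^{N_0})'_b$ as an $A[Z^+]$-module follows from $\varprojlim_n A_n = A$ together with the interpretation of $N_0$-coinvariants on $V'_b$ as the strong dual of the $N_0$-invariants on $V$. Since $W$ is finite-dimensional, tensoring with $W$ commutes with the inverse limit, giving $\varprojlim_n \tilde U_n \cong (V^{N_0})'_b \otimes_K W$, and finite-dimensionality of $W$ also gives the natural topological isomorphism $(V^{N_0})'_b \otimes_K W \cong (V^{N_0} \otimes_K W')'_b$; $A[Z^+]$-compatibility is then a direct check. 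The main obstacle I anticipate is keeping the two algebra structures on $A_n \otimes_K \End_K W$ (via $\alpha_n$ and $\beta_n$) straight throughout, and in particular verifying that the transition maps and the map $\tilde\alpha$ intertwine the twisted structures on source and target correctly; this is bookkeeping around the untwisting isomorphism $\gamma_n$, but it is where any errors would most likely creep in.
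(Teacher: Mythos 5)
Your overall route is the one the paper takes: the first bullet by observing $\tilde U_n \cong U_n^{\oplus \dim W}$ topologically and invoking the twisted module structure from the previous subsection; the second by tensoring the $A_n$-compact transition map with $\mathrm{id}_W$, applying Lemma \ref{lemma:compact-product} to get $A_n \otimes_K \End W$-compactness, and descending along the finite morphism $\beta_n$ via Lemma \ref{lemma:compact-descent}. Your sketch of the closing identification $\varprojlim_n \tilde U_n \cong (V^{N_0}\otimes_K W')'_b$ (which the paper's proof does not spell out) is also reasonable, and you are right that the bookkeeping between the $\alpha_n$- and $\beta_n$-structures is where care is needed.

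There is, however, one step that fails as written: the choice $\tilde\alpha = \alpha \otimes \mathrm{id}_W$ in the third bullet. The $Z^+$-action on $\tilde U_n = U_n \otimes_K W$ is the \emph{diagonal} one, so $z$ acts as $z \otimes \rho(z)$, and the vertical arrows in the twisted analogue of \eqref{eq:link} are $(\mathrm{id}\hotimes{} z)\otimes\rho(z)$ and $z\otimes\rho(z)$. With your $\tilde\alpha$, the lower triangle gives $(\text{transition})\circ\tilde\alpha = z\otimes\mathrm{id}_W$, which differs from the action of $z$ on $\tilde U_n$ by the generally nontrivial automorphism $\rho(z)$ of $W$ (recall $z$ lies in $Z(M)$ and $W$ is an $M$-representation, so there is no reason for $\rho(z)$ to be the identity). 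Hence the diagram does not commute and the hypotheses of \cite[3.2.24]{emerton-jacquet} are not verified. The correct choice, and the one the paper makes, is $\tilde\alpha = \alpha\otimes\rho(z)$: since $\rho(z)$ is invertible and commutes with everything in sight, both triangles then reduce to the untwisted diagram tensored with $\rho(z)$. This is a small and easily repaired slip, but as stated the third bullet is not proved.
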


\begin{proof}
 Since $\tilde U_n$ is isomorphic to $(U_n)^{\oplus \dim W}$ as a topological $K$-vector space, it is certainly of compact type, and we have already observed that it is a topological $A_n$-module for all $n \ge n(\rho)$. Furthermore the $Z^+$-action commutes with the $M_0$-action, and thus it must be $A_n$-linear by continuity.

Moreover, we have an $A_n$-compact map $A_n \hotimes{A_{n+1}} U_{n+1} \to U_n$. Tensoring with the identity map gives a morphism of $A_n \otimes \End W$-modules $\left(A_n \hotimes{A_{n+1}} U_{n+1}\right) \otimes_K W \to U_n \otimes_K W$, which is $A_n \otimes_K \End W$-compact by lemma \ref{lemma:compact-product}. But the map $\beta: A_n \to A_n \otimes_K \End W$ is a finite morphism, so by lemma \ref{lemma:compact-descent}, this map is $A_n$-compact.

Finally, we know that there exists a map $\alpha: U_n \to A_n \hotimes{A_{n+1}} U_{n+1}$ through which $z$ factors, and it is clear that if we define $\tilde \alpha$ to be the map $\alpha \otimes \rho(z)$ then the diagram corresponding to \eqref{eq:link} commutes.
\end{proof}

The preceding proposition asserts precisely that the hypotheses of \cite[3.2.24]{emerton-jacquet} are satisfied, and that proposition (and its proof) give us the following:

\begin{corollary}\label{cor:coadm1}
The space $X = \left[(V^{N_0} \otimes_K W')_{\rm fs}\right]'_b$ is a coadmissible $\Can(\widehat Z) \hotimes{K} A$-module, where $(-)_{\rm fs}$ denotes the finite-slope-part functor $\Rep_{\rm top,c}(Z^+) \to \Rep_{\rm la,c}^z(Z)$ of \cite[3.2.1]{emerton-jacquet}.

Moreover, if $(Y_n)_{n \ge 0}$ is any increasing sequence of affinoid subdomains of $\widehat Z$ whose union is the entire space, then for any $n \ge n(\rho)$ we have
\[ \left(\Can(Y_n)^\dag \hotimes{K} A_n\right) \hotimes{(\Can(\widehat Z) \hotimes{K} A)} \left[(V^{N_0} \otimes_K W)_{\rm fs}\right]'_b = \Can(Y_n)^\dag \hotimes{K[Z^+]} \tilde U_n.
\]
\end{corollary}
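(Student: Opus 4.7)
The plan is to invoke Emerton's Proposition \cite[3.2.24]{emerton-jacquet} directly. The immediately preceding proposition has been engineered so that its three bullet points, applied to the system $(\tilde U_n)_{n \ge n(\rho)}$, give exactly the hypotheses of that result: each $\tilde U_n$ is a compact type topological $A_n$-module on which $Z^+$ acts $A_n$-linearly; the transition maps $A_n \hotimes{A_{n+1}} \tilde U_{n+1} \to \tilde U_n$ are $A_n$-compact and $Z^+$-equivariant; and for some good $z \in Z^+$ there is a lift $\tilde\alpha$ making the analogue of diagram \eqref{eq:link} commute. The last sentence of the previous proposition identifies the projective limit of this system (along the dualised transition maps) with the strong dual of $V^{N_0} \otimes_K W'$, carrying the diagonal $Z^+$-action.

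With this set-up, both halves of the corollary follow from \cite[3.2.24]{emerton-jacquet}. Its conclusion asserts precisely that the dual $X = \bigl[(V^{N_0} \otimes_K W')_{\rm fs}\bigr]'_b$ is coadmissible over $\Can(\widehat Z) \hotimes{K} A$, and that, on any admissible affinoid $Y_n \subseteq \widehat Z$ in a chosen exhaustion, its localisation is computed for $n \ge n(\rho)$ by the overconvergent tensor product $\Can(Y_n)^\dag \hotimes{K[Z^+]} \tilde U_n$. This second assertion is essentially definitional once the hypotheses of 3.2.24 are in place, since the finite-slope-part functor \cite[3.2.1]{emerton-jacquet} is constructed precisely via such an exhaustion; the proof of 3.2.24 itself proceeds by matching these local pieces.

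The main obstacle, already discharged in the preceding proposition, is the verification of $A_n$-compactness of the \emph{twisted} transition maps. The untwisted maps $A_n \hotimes{A_{n+1}} U_{n+1} \to U_n$ are $A_n$-compact by \cite[4.2.23]{emerton-jacquet}, but after tensoring with $W$ the relevant $A_n$-action on $\tilde U_n$ is the diagonal one, obtained by pullback through the ring map $\beta_n \colon A_n \to A_n \otimes_K \End W$ constructed in the previous subsection. Reconciling these two $A_n$-structures is precisely what Lemmas \ref{lemma:compact-product} and \ref{lemma:compact-descent} are for: first one obtains an $(A_n \otimes_K \End W)$-compact map by tensoring with $\mathrm{id}_W$, and then descends $A_n$-compactness along the finite ring map $\beta_n$. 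Once this is in hand, together with the parallel definition $\tilde\alpha = \alpha \otimes \rho(z)$ of the link map, the corollary is a formal consequence of the cited reference and no further argument is required.
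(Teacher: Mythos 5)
Your proposal is correct and follows exactly the paper's route: the corollary is deduced by observing that the preceding proposition supplies precisely the hypotheses of \cite[3.2.24]{emerton-jacquet} (with the twisted compactness handled beforehand via Lemmas \ref{lemma:compact-product} and \ref{lemma:compact-descent}), and that result together with its proof yields both the coadmissibility statement and the local description over each $Y_n$. No further argument is needed beyond what you have written.
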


By \cite[3.2.9]{emerton-memoir} we have $X = \left[(V^{N_0} \otimes_K W')_{\rm fs}\right]'_b = \left[(V^{N_0})_{\rm fs} \otimes_K W'\right]'_b = \left[J_P(V) \otimes_K W'\right]'_b$, so the preceding proposition gives us a description of the strong dual of the $W$-twisted Jacquet module.

\begin{theorem}
Suppose $V$ is an admissible locally analytic $G$-representation, $W$ is a finite-dimensional locally analytic representation of $M$, and $\Gamma$ is an open compact subgroup of $D$. Let $E \subseteq \widehat Z$ be the support of the coherent sheaf on $\widehat Z$ corresponding to the essentially admissible locally analytic $Z$-representation $\left(J_P(V) \otimes_K W'\right)^{\Gamma}$. Then the natural map $E \to (\operatorname{Lie} Z)'$ (induced by the differentiation map $\widehat Z \to (\operatorname{Lie} Z)'$) has discrete fibres.
\end{theorem}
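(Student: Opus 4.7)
The proof will follow Emerton's argument for Borel subgroups (\cite[Thm.~4.2.23]{emerton-jacquet}), with the $W$-twist and the $\Gamma$-invariants handled by the machinery developed in the preceding subsection; indeed, that machinery was set up precisely so that Emerton's argument transports with only routine modifications.

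First, apply Corollary~\ref{cor:coadm1} to realise $X := [J_P(V) \otimes_K W']'_b$ as a coadmissible module over $\Can(\widehat Z) \hotimes{K} A$, with sections over each affinoid $Y_n$ in a fixed admissible cover of $\widehat Z$ given by $\Can(Y_n)^\dag \hotimes{K[Z^+]} \tilde U_n$ for $n \ge n(\rho)$. Dualising, the dual $\tilde X$ of $(J_P(V) \otimes W')^\Gamma$ is the quotient of $X$ by the closed left ideal generated by the augmentation ideal of $\Dla(\Gamma) \subseteq A$; the reduced support of the associated coherent sheaf on $\widehat Z$ is precisely $E$.

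Second, after shrinking $(H_n)$ so that $H_0 \cap D \subseteq \Gamma$, the Iwahori decomposition $H_n = N_n Z_n D_n \bar N_n$ lets us analyse the residual action of $A_n$: the contributions of $\Dla(N_0)$ and $\Dla(\bar N_0)$ have been absorbed into the Jacquet module construction and the $Z^+$-action respectively, and $\Dla(D_n)$ acts through its augmentation after taking $\Gamma$-invariants, leaving only $\Dla(Z_n)$, whose action on $\tilde X$ factors through $\Can(\widehat Z)$ by essential admissibility. Hence the section $\tilde M_n$ is a coherent $\Can(Y_n)$-module on which any good $z \in Z^+$ acts $\Can(Y_n)$-compactly. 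Compactness in the presence of the $W$-twist is the content of Lemmas~\ref{lemma:compact-product} and \ref{lemma:compact-descent} combined with the previous proposition, and it is preserved under the passage to the quotient by the $\Dla(\Gamma)$-augmentation.

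Third, conclude as in Emerton: $E \cap Y_n$ lies in the zero locus of the Fredholm series $\det(1 - Tz \mid \tilde M_n)$ under the substitution $T = \chi(z)$. Choosing $z \in Z^+$ so that $\log(z) \in \Lie Z$ is sufficiently generic -- which may be arranged by the hypothesis on $Z^+$ -- the composite $\chi \mapsto \chi(z)$ restricts to each fibre of the differentiation map $\widehat Z \to (\Lie Z)'$ as a nonconstant analytic map, and so the fibre of $E \cap Y_n$ over any point of $(\Lie Z)'$ is the preimage of the discrete zero set of a single-variable entire Fredholm series, hence itself discrete. The main obstacle in this plan is ensuring that the compactness of the $z$-action survives both the $W$-twist and the $\Gamma$-quotient; once this is in place, the classical Fredholm argument proceeds unchanged.
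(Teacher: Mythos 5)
Your first two steps track the paper's argument: Corollary \ref{cor:coadm1} is indeed the starting point, the reduction to $\Gamma = D_0$ (i.e.\ arranging $H_0 \cap D \subseteq \Gamma$ by shrinking the $H_n$) is the paper's first move, and the survival of $A_n$-compactness of the $z$-action under the $W$-twist via Lemmas \ref{lemma:compact-product} and \ref{lemma:compact-descent} is exactly what the preceding subsection was built to supply. One reduction you elide: a point of $(\operatorname{Lie} Z)'$ determines the character of an open compact $Z_0 \subseteq Z$ only up to a discrete ambiguity, so one fixes such a character $\chi$, checks whether it extends to $M$ (if not, the relevant space is empty), and twists $W$ by an extension so as to reduce to $\chi$ trivial before applying the augmentation $A_n \to K$. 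This is bookkeeping, not the real problem.

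The genuine gap is in your third step. You conclude that the fibre of $E$ over a point of $(\operatorname{Lie} Z)'$ is contained in the preimage, under $\chi \mapsto \chi(z)$, of the discrete zero set of a one-variable Fredholm series, ``hence itself discrete.'' That inference fails whenever $Z$ has split rank $a \ge 2$: the fibre of the differentiation map is (a countable union of copies of) $\mathbb{G}_m^a$, the map $\chi \mapsto \chi(z)$ is a monomial map $\mathbb{G}_m^a \to \mathbb{G}_m$, and the preimage of a single point is an $(a-1)$-dimensional coset of a subtorus. A single compact operator cuts the dimension down by at most one, so this argument only places $E \cap Y_n$ inside a hypersurface; since the paper's whole point is non-Borel parabolics, where $Z = Z(M)$ routinely has rank $\ge 2$ (e.g.\ the $(2,2)$-parabolic of $\mathrm{GL}_4$), this is not an edge case. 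Neither Emerton's Theorem 4.2.23 nor the paper argues via a Fredholm zero locus here. The correct endgame is to specialize the identity of Corollary \ref{cor:coadm1} along the augmentation $A_n \to K$, obtaining $\Can(Y_n)^\dag \hotimes{K[Z^+]} \left(K \hotimes{A_n} \tilde U_n\right)$, and then to invoke \cite[2.3.4(ii) and 2.3.6]{emerton-jacquet}: because the finite-slope part is taken over the \emph{whole} monoid $Z^+$, which generates $Z$ and hence constrains $\chi$ in all $a$ unramified directions once $\chi$ ranges over the affinoid $Y_n$, compactness of a single $z$ forces this space to be finite-dimensional over $K$. Finite-dimensionality over $K$ of the sections over every $Y_n$ — not merely coherence of $\tilde M_n$ over $\Can(Y_n)$, which says nothing about the support — is what yields discreteness.
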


\begin{proof}
Since we are free to replace the sequence $(H_n)$ of subgroups of $G$ with a cofinal subsequence, we may assume that $\Gamma \supseteq D_0$. So it suffices to prove the result for $\Gamma = D_0$. Furthermore, since the differentiation map $\widehat Z_0 \to (\operatorname{Lie} Z)'$ has discrete fibres, it suffices to show that for any character $\chi$ of $Z_0$, the rigid space 
\[ \operatorname{Exp} \left(J_P(V) \otimes_K W'\right)^{D_0, Z_0 = \chi} \subseteq \widehat{Z}\]
is discrete. If $\chi$ does not extend to a character of $M$, then this space is clearly empty, so there is nothing to prove; otherwise, let us fix such an extension, which gives us an isomorphism $\left(J_P(V) \otimes_K W'\right)^{D_0, Z_0 = \chi} = \left[J_P(V) \otimes_K (W \otimes_K \chi)'\right]^{M_0}$. So we may assume without loss of generality that $\chi$ is the trivial character, and it suffices to show that
\[ \Can(Y_n)^\dag \hotimes{\Can(\widehat Z)} \left[\left(J_P(V) \otimes_K W'\right)^{M_0}\right]'_b\]
is finite-dimensional over $K$ for all $n$, or (equivalently) all sufficiently large $n$.

If we take the completed tensor product of both sides of the formula in corollary \ref{cor:coadm1} with $\Can(Y_n)^\dag$, regarded as a $\Can(Y_n)^\dag \hotimes{K} A_n$-algebra via the augmentation map $A_n \to K$, we have
\begin{multline}\label{eq:coadm2}
 \Can(Y_n)^\dag \hotimes{\left(\Can(Y_n)^\dag \hotimes{K} A_n\right)}\left(\Can(Y_n)^\dag \hotimes{K} A_n\right) \hotimes{\left(\Can(\widehat Z) \hotimes{K} A\right)} \left[J_P(V) \otimes_K W'\right]'_b = \\ \Can(Y_n)^\dag \hotimes{\left(\Can(Y_n)^\dag \hotimes{K} A_n\right)} \left(\Can(Y_n)^\dag \hotimes{K[Z^+]} \tilde U_n\right).
\end{multline}

The left-hand side of \eqref{eq:coadm2} simplifies as
\begin{gather*}
 \Can(Y_n)^\dag \hotimes{\left(\Can(Y_n)^\dag \hotimes{K} A_n\right)}\left(\Can(Y_n)^\dag \hotimes{K} A_n\right) \hotimes{\left(\Can(\widehat Z) \hotimes{K} A\right)} \left[J_P(V) \otimes_K W'\right]'_b \\
= \Can(Y_n)^\dag \hotimes{\left(\Can(\widehat Z) \hotimes{K} A\right)}  \left[J_P(V) \otimes_K W'\right]'_b\\
= \Can(Y_n)^\dag \hotimes{\Can(\widehat Z)} \left( K \hotimes{A} \left[J_P(V) \otimes_K W'\right]'_b \right)\\
= \Can(Y_n)^\dag \hotimes{\Can(\widehat Z)} \left[ \left(J_P(V) \otimes_K W'\right)^{M_0} \right]'_b.
\end{gather*}

Meanwhile, the right-hand side of \eqref{eq:coadm2} is
\begin{gather*}
 \Can(Y_n)^\dag \hotimes{\left(\Can(Y_n)^\dag \hotimes{K} A_n\right)} \left(\Can(Y_n)^\dag \hotimes{K[Z^+]} \tilde U_n\right)\\
= \Can(Y_n)^\dag \hotimes{K[Z^+]} \left(K \hotimes{A_n} \tilde U_n\right). 
\end{gather*}

Any $z \in Z^+$ that induces an $A_n$-compact endomorphism of $\tilde U_n$ will induce a $K$-compact endomorphism of $K \hotimes{A_n} \tilde U_n$, by \cite[2.3.4(ii)]{emerton-jacquet}. Such a $z$ does exist, by hypothesis. Hence $\Can(Y_n)^\dag \hotimes{K[Z^+]} \left(K \hotimes{A_n} \tilde U_n\right)$ is finite-dimensional over $K$, by \cite[2.3.6]{emerton-jacquet}. Comparing the two sides of \eqref{eq:coadm2}, we are done.
\end{proof}

We also have a version of \cite[4.2.36]{emerton-jacquet} in this context.

\begin{theorem}
If $V$ is an admissible locally analytic representation of $G$ such that there is an isomorphism of $H$-representations $V \stackrel\sim\to \Cla(H)^r$, for some open compact $H \subseteq G$ and some $r \in \mathbb{N}$, then for any $W$ and $\Gamma$, $E = \operatorname{Exp} \left(J_P(V) \otimes_K W'\right)^{\Gamma}$ is equidimensional of dimension $d$, where $d$ is the dimension of $Z$.
\end{theorem}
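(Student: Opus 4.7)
The plan is to adapt Emerton's argument in \cite[4.2.36]{emerton-jacquet} to our parabolic-plus-twist setting, using the modules $\tilde U_n$ constructed in the previous subsection. As in the previous theorem, a cofinal choice of the sequence $(H_n)$ reduces us to the case $\Gamma = D_0$. Corollary \ref{cor:coadm1} identifies the coherent sheaf attached to $\bigl(J_P(V) \otimes_K W'\bigr)^{D_0}$, restricted to an affinoid $Y_n$ in an exhaustion of $\widehat Z$, with the sheaf on $Y_n$ obtained from $\Can(Y_n)^\dag \hotimes{K[Z^+]} \tilde U_n$ by further taking $D_0$-invariants, so it suffices to establish equidimensionality of its support in each $Y_n$.

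The key structural input is that $V \cong \Cla(H)^r$ forces $V'_b$ to be free of rank $r$ over $\Dla(H)$. Using the rigid Iwahori decomposition $\mathbb{H}_n = \mathbb{N}_n \times \mathbb{Z}_n \times \mathbb{D}_n \times \bar{\mathbb{N}}_n$ and the corresponding tensor factorisation of the distribution algebras, the space $D(\mathbb{H}_n^\circ, H_0) \hotimes{\Dla(H_0)} V'_b$ inherits a product decomposition. Passing to $N_0$-coinvariants suppresses the $\mathbb{N}_n$-factor, and tensoring with the finite-dimensional $W$ preserves the resulting structure: $\tilde U_n$ is then exhibited as an orthonormalizable Banach module over $A_n^Z \hotimes{K} A_n^D$ (the distribution algebras of the $\mathbb{Z}_n$- and $\mathbb{D}_n$-factors), with its orthonormal basis indexed by $\bar{\mathbb{N}}_n$ and a finite tag from $W$, on which an appropriately chosen $z \in Z^+$ acts as a compact operator.

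With this presentation in hand, Emerton's spectral-variety argument (as in \cite[\S 2.3]{emerton-jacquet}, applied in the proof of \cite[4.2.36]{emerton-jacquet}) can be invoked: after base change to $\Can(Y_n)^\dag$ and passage to the augmentation module of $A_n^D$ (which realises the $D_0$-invariants), the compact operator $z$ defines a Fredholm-type spectral decomposition whose resulting coherent sheaf on $Y_n$ has support equidimensional of dimension $d$; gluing across the exhaustion $(Y_n)$ yields the claim. I expect the chief technical obstacle to be the honest treatment of the $D$-factor, which is trivial in the Borel case of Emerton's original argument: one must verify that descending the spectral variety of $z$ over $\Can(Y_n)^\dag \hotimes{K} A_n^D$ to $Y_n$ via $D_0$-invariants does not introduce components of smaller dimension, which should follow from the projectivity of the relevant slope decompositions but requires careful bookkeeping through the Iwahori factorisation and the twist by $W$.
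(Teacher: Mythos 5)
Your proposal follows essentially the same route as the paper: reduce to $\Gamma = D_0$, use the Iwahori factorisation and the untwisting isomorphism to exhibit the relevant module as an orthonormalizable Banach module built from $D(\overline{\mathbb{N}}^\circ_n, \overline{N}_0)$, realise the $D_0$-invariants via the augmentation map on the $D$-factor, and conclude by the spectral theory of compact operators over the affinoids exhausting $\widehat Z$. Two small remarks. First, the paper disposes of the twist at the outset: the untwisting isomorphism extends to $U_n \otimes_K W \cong D(\overline{\mathbb{N}}^\circ_n, \overline{N}_0)^{r \dim W} \hotimes{K} A_n$, so one may simply replace $r$ by $r\dim W$ and assume $W$ trivial, rather than carrying $W$ along as a tag on the orthonormal basis. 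Second, the ``chief technical obstacle'' you flag --- that descending the spectral variety over $\Can(Y_n)^\dag \hotimes{K} A_n^D$ to $Y_n$ via $D_0$-invariants might introduce lower-dimensional components --- is an artefact of the order in which you propose to do things, and disappears if you base-change first and form the spectral variety second: applying $- \hotimes{A_n} D(\mathbb{Z}^\circ_n, Z_0)$ (the augmentation of the $D_0$-factor) and then extending scalars to $\Can(X_n)$ yields directly an orthonormalizable $\Can(X_n)$-Banach module with a $\Can(X_n)$-compact action of $z$, so Buzzard's results apply over the $d$-dimensional base $\Can(X_n)$ with no descent step needed.
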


\begin{proof}
As in \cite{emerton-jacquet}, we may assume (by replacing the sequence $(G_n)_{n \ge 0}$ with a cofinal subsequence if necessary) that $H = H_0$ and $\Gamma \supseteq D_0$. But then we can identify $\left(J_P(V) \otimes_K W'\right)^{\Gamma}$ with a direct summand of $\left(J_P(V) \otimes_K W'\right)^{D_0}$; this identifies $\operatorname{Exp} \left(J_P(V) \otimes_K W'\right)^{\Gamma}$ with a union of irreducible components of $\operatorname{Exp} \left(J_P(V) \otimes_K W'\right)^{D_0}$. We may therefore assume that in fact $\Gamma = D_0$. As a final reduction, letting $U_n = \left(D(\mathbb{H}^\circ_n, H_0) \hotimes{\Dla(H_0)} V'_b\right)_{N_0}$ as before, we note that the untwisting isomorphism $U_n \stackrel\sim\to D(\overline{\mathbb{N}}^\circ_n, \overline{N}_0)^r \hotimes{K} A_n$ (equation 4.2.39 in \cite{emerton-memoir}) can be extended to an isomorphism $U_n \otimes_K W \to D(\overline{\mathbb{N}}^\circ_n, \overline{N}_0)^{r \dim W} \hotimes{K} A_n$. We thus assume that $W$ is the trivial representation.

Following Emerton, we choose Banach spaces $W_n$ such that the map $D(\overline{\mathbb{N}}^\circ_{n+1}, \overline{N}_0)^r \to D(\overline{\mathbb{N}}^\circ_n, \overline{N}_0)^r$ factors through $W_n$, and (exactly as in the Borel case) for a suitable $z \in Z^+$ we have
\[ J_P(V)'_b \stackrel\sim\to \varprojlim_n K\{\{z, z^{-1}\}\} \hotimes{K[z]} (W_n \hotimes{K} A_n),\]
for some $A_n$-linear action of $z$ on $W_n \hotimes{K} A_n$ which factors through $D(\overline{\mathbb{N}}^\circ_{n+1}, \overline{N}_0)^r \hotimes{K} A_n$. Taking the completed tensor product with the map $A_n \to D(\mathbb{Z}^\circ_n, Z_0)$ given by the augmentation map of $D_0$, we have 
\[ \left[J_P(V)^{D_0}\right]'_b \stackrel\sim\to \varprojlim_n K\{\{z, z^{-1}\}\} \hotimes{K[z]} W_n \hotimes{K} D(\mathbb{Z}^\circ_n, Z_0).\]

Let us write $\widehat Z_0$ as an increasing union of affinoid subdomains $(X_n)_{n \ge 0}$, such that the natural map $\Dla(Z_0) \stackrel\sim\to \Can(\widehat Z_0) \to \Can(X_n)$ factors through $D(\mathbb{Z}^\circ_n, Z_0)$. Extending scalars from $D(\mathbb{Z}_n^\circ, Z_0)$ to $\Can(\widehat Z)$ via this map, the above formula becomes
\[
\left[J_P(V)^{D_0}\right]'_b = \varprojlim_n K\{\{z, z^{-1}\}\} \hotimes{K[z]} W_n \hotimes{K} \Can(X_n).
\]
The action of $z$ on $W_n \hotimes{K} \Can(X_n)$ is a $\Can(X_n)$-compact morphism of an orthonormalizable $\Can(X_n)$-Banach module, so the result follows by the methods of \cite{buzzard-eigen}.
\end{proof}

\section{Change of parabolic}

We now consider the problem of relating the geometric objects arising from the above construction for two distinct parabolic subgroups.

\subsection{Transitivity of Jacquet functors}

Let us recall the definition of the finite-slope-part functor, which we have already seen in the previous section. We let $Z$ be a topologically finitely generated abelian locally $L$-analytic group, and $Z^+$ an open submonoid of $Z$ which generates $Z$ as a group. Then we have the following functor $\Rep_{\rm top,c}(Z^+) \to \Rep_{\rm la,c}^z(Z)$:

\begin{definition}[{\cite[3.2.1]{emerton-jacquet}}]
For any object $V \in \Rep_{\rm top, c}(Z^+)$, we define
\[ V_{\rm fs} = \mathcal{L}_{b, Z^+}(\Can(\widehat Z), V),\]
endowed with the action of $Z$ on the first factor.
\end{definition}

\begin{lemma}
Let $Z$ be a topologically finitely generated abelian group and $Y$ a closed subgroup, and suppose $Y^+$ and $Z^+$ are submonoids of $Y$ and $Z$ satisfying the conditions above, with $Y^+ \subseteq Y \cap Z^+$. Then for all $V \in \Rep_{\rm top,c}(Z^+)$, the natural map $V_{Y-{\rm fs}} \to V$ induces an isomorphism
\[ (V_{Y-{\rm fs}})_{Z-{\rm fs}} \stackrel\sim\to V_{Z-{\rm fs}}. \]
\end{lemma}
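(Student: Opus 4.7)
The plan is to deduce the lemma formally from the universal property of the finite-slope functor $(-)_{\rm fs}$, which by the results of \cite[\S 3.2]{emerton-jacquet} exhibits $(-)_{\rm fs}$ as right adjoint to the natural inclusion $\Rep_{\rm la,c}^z(Z) \into \Rep_{\rm top,c}(Z^+)$ (and analogously for $Y$), so that the counit $V_{Z-{\rm fs}} \to V$ is universal among morphisms into $V$ whose source is locally analytic and $Z$-tempered.

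First I would verify that $V_{Y-{\rm fs}}$ inherits from $V$ a compact-type topological $Z^+$-representation structure. Since $Z$ is abelian, the $Z^+$-action on $V$ commutes with the $Y^+$-action: concretely, for $z \in Z^+$ and $\phi \in \mathcal{L}_{b,Y^+}(\Can(\widehat Y), V) = V_{Y-{\rm fs}}$, the composition $z \circ \phi$ remains $Y^+$-equivariant and defines the required $Z^+$-action, which is continuous and preserves compact type because $V_{Y-{\rm fs}}$ sits as a closed $Z^+$-stable subspace of $\mathcal{L}_b(\Can(\widehat Y), V)$. The canonical map $\pi_Y : V_{Y-{\rm fs}} \to V$ is then $Z^+$-equivariant, and applying $(-)_{Z-{\rm fs}}$ produces the map of the statement.

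To construct an inverse I would observe that $V_{Z-{\rm fs}}$, viewed as a representation of $Y$ via the closed embedding $Y \into Z$, still lies in $\Rep_{\rm la,c}^z(Y)$: $Z$-local-analyticity restricts to $Y$-local-analyticity, and any $Z$-invariant $BH$-subspace is automatically $Y$-invariant. The $Z^+$-equivariant counit $\pi_Z : V_{Z-{\rm fs}} \to V$ is in particular $Y^+$-equivariant, so by the $Y$-universal property it factors uniquely as $V_{Z-{\rm fs}} \xrightarrow{\tilde\pi_Z} V_{Y-{\rm fs}} \xrightarrow{\pi_Y} V$, and naturality of this factorisation with respect to the commuting $Z^+$-action forces $\tilde\pi_Z$ itself to be $Z^+$-equivariant. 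Since the inclusion $\Rep_{\rm la,c}^z(Z) \into \Rep_{\rm top,c}(Z^+)$ is fully faithful, the unit $V_{Z-{\rm fs}} \stackrel{\sim}{\to} (V_{Z-{\rm fs}})_{Z-{\rm fs}}$ of the $Z$-adjunction is an isomorphism; composing its inverse with the arrow $(V_{Z-{\rm fs}})_{Z-{\rm fs}} \to (V_{Y-{\rm fs}})_{Z-{\rm fs}}$ induced by $\tilde\pi_Z$ yields the candidate inverse $V_{Z-{\rm fs}} \to (V_{Y-{\rm fs}})_{Z-{\rm fs}}$. A final diagram chase shows that both composites fit into the same triangle over $V$ as the identity, and are therefore themselves identities by the uniqueness clause in the $Z$-universal property.

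The step I expect to require the most care is the verification that the inherited $Z^+$-action on $V_{Y-{\rm fs}}$ genuinely gives an object of $\Rep_{\rm top,c}(Z^+)$, together with the claim that the $Y$-universal property is natural enough with respect to the commuting $Z^+$-action to force $\tilde\pi_Z$ to be $Z^+$-equivariant. Both should follow routinely from the internal-Hom realisation of $(-)_{\rm fs}$ and the compact-type property of $\Can(\widehat Y)$, but deserve careful topological bookkeeping.
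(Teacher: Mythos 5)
Your proposal is correct and follows essentially the same route as the paper: both factor the canonical $Z^+$-equivariant map $V_{Z-{\rm fs}} \to V$ through $V_{Y-{\rm fs}}$ via the universal property of \cite[3.2.4(ii)]{emerton-jacquet}, apply the universal property a second time to land in $(V_{Y-{\rm fs}})_{Z-{\rm fs}}$, and check the two maps are mutually inverse. The extra bookkeeping you flag (the inherited $Z^+$-structure on $V_{Y-{\rm fs}}$ and the $Z^+$-equivariance of the factored map) is exactly what the paper's terser argument takes for granted.
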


\begin{proof} Consider the canonical $Z^+$-equivariant map $V_{Z-{\rm fs}} \to V$. We note that $V_{Z-{\rm fs}}$ is in $\Rep_{\rm la.c}^z(Z)$, and hence {\it a fortiori} in $\Rep_{\rm la.c}^z(Y)$. Hence by the universal property of \cite[3.2.4(ii)]{emerton-jacquet}, the above map factors through  $V_{Y-{\rm fs}}$. The factored map is still $Z^+$-equivariant, so by a second application of the universal property it factors through $(V_{Y-{\rm fs}})_{Z-{\rm fs}}$. This gives a continuous $Z$-equivariant map $V_{Z-{\rm fs}} \to (V_{Y-{\rm fs}})_{Z-{\rm fs}}$, which is clearly inverse to the map in the statement of the proposition.
\end{proof}

Now suppose $\mathcal{P}_1$ and $\mathcal{P}_2$ are parabolic subgroups of the reductive group $\mathcal{G}$ over $L$, with $\mathcal{P}_1 \supseteq \mathcal{P}_2$. We let $\mathcal{N}_1, \mathcal{N}_2$ be their unipotent radicals, so we have a chain of inclusions $\mathcal{G} \supseteq \mathcal{P}_1 \supseteq \mathcal{P}_2 \supseteq \mathcal{N}_2 \supseteq \mathcal{N}_1$. 

Let us choose a Levi subgroup $\mathcal{M}_2$ of $\mathcal{P}_2$, so $\mathcal{P}_2 = \mathcal{N}_2 \rtimes \mathcal{M}_2$. There is a unique Levi subgroup $\mathcal{M}_1$ of $\mathcal{P}_1$ containing $\mathcal{M}_2$; and $\mathcal{P}_{12} = \mathcal{P}_2 \cap \mathcal{M}_1$ is a parabolic subgroup of $\mathcal{M}_1$ of which $\mathcal{M}_2$ is also a Levi factor. We write $\mathcal{Z}_1$, $\mathcal{Z}_2$ for the centres of $\mathcal{M}_1$ and $\mathcal{M}_2$. 

All of the above are reductive groups over $L$, so their $L$-points are locally $L$-analytic groups; we denote these groups of points by the corresponding Roman letters.

\begin{theorem}\label{thm:transitivity}
\begin{enumerate}
 \item For any unitary admissible continuous $G$-representation $V$, there is a unique isomorphism of admissible continuous $M_2$-representations
 \[ \Ord_{P_{12}}\left(\Ord_{P_1} V\right) = \Ord_{P_2} V\]
 commuting with the canonical lifting maps from both sides into $V^{N_2}$. 
 \item For any essentially admissible locally analytic $G$-representation $V$, there is a unique isomorphism of essentially admissible locally analytic $M_2$-representations
 \[ J_{P_{12}} \left(J_{P_1} V \right) = J_{P_2} V\]
 commuting with the canonical lifting maps.
\end{enumerate}
\end{theorem}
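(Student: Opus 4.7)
My plan is to reduce both statements to the transitivity lemma for the finite-slope-part functor proved immediately above, by exploiting a simultaneous Iwahori decomposition for the nested pair of parabolics. First I would choose a decreasing sequence $(H_n)_{n\ge 0}$ of good $L$-analytic open subgroups of $G$ whose underlying rigid affinoids admit Iwahori factorizations $\mathbb{H}_n = \overline{\mathbb{N}}_{2,n} \cdot \mathbb{M}_{2,n} \cdot \mathbb{N}_{2,n}$ adapted to $P_2$, together with an internal refinement $\mathbb{N}_{2,n} = \mathbb{N}_{1,n} \cdot \mathbb{N}_{12,n}$ (where $N_{12}$ is the unipotent radical of $P_{12}$ in $M_1$), and such that $\mathbb{H}_n \cap \mathbb{M}_1$ is itself a good analytic subgroup of $M_1$ with Iwahori decomposition adapted to $P_{12}$. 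Such a sequence exists by the same construction Emerton uses for a single parabolic, applied to the root system stratified along $P_2 \subseteq P_1 \subseteq G$. At the level of $H_0$-invariants this yields the identification $V^{N_{2,0}} = (V^{N_{1,0}})^{N_{12,0}}$, where $N_{i,0} = N_i \cap H_0$ and $N_{12,0} = N_{12} \cap H_0$.

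Next I would check that the contracting monoids are compatible. Since $Z_1 \subseteq Z_2$ and $Z_1$ centralises $M_1 \supseteq N_{12}$, for any $z \in Z_1$ we have $z N_{2,0} z^{-1} = (z N_{1,0} z^{-1}) \cdot N_{12,0}$, whence $Z_1^+ = Z_1 \cap Z_2^+$. The finite-slope transitivity lemma above then applies with $Y = Z_1$, $Z = Z_2$. For (2), I would combine this with the key identification
\begin{equation*}
\bigl(V^{N_{2,0}}\bigr)_{Z_1\text{-fs}} \;=\; J_{P_1}(V)^{N_{12,0}},
\end{equation*}
established via the universal property of the finite-slope functor: the closed $Z_1^+$-equivariant embedding $V^{N_{2,0}} \hookrightarrow V^{N_{1,0}}$ produces a map from the left-hand side into $(V^{N_{1,0}})_{Z_1\text{-fs}} = J_{P_1}(V)$ whose image is $N_{12,0}$-fixed, while conversely any $Z_1^+$-equivariant continuous map $\Can(\widehat Z_1) \to V^{N_{1,0}}$ with $N_{12,0}$-fixed image factors through $V^{N_{2,0}}$, providing the inverse. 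Assembling these ingredients,
\begin{equation*}
J_{P_2}(V) = \bigl(V^{N_{2,0}}\bigr)_{Z_2\text{-fs}} = \bigl((V^{N_{2,0}})_{Z_1\text{-fs}}\bigr)_{Z_2\text{-fs}} = \bigl(J_{P_1}(V)^{N_{12,0}}\bigr)_{Z_2\text{-fs}} = J_{P_{12}}\bigl(J_{P_1}(V)\bigr),
\end{equation*}
with every step manifestly compatible with the canonical lifting into $V^{N_{2,0}}$; this last observation also pins down uniqueness. Part (1) is strictly parallel, replacing the finite-slope functor by Emerton's ordinary-vectors construction (the maximal closed $M$-subrepresentation of $V^{N_0}$ on which $Z^+$ acts by invertible operators with uniformly bounded inverses) and invoking its analogous transitivity with respect to nested monoids.

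The main obstacle I expect is the commutation identity $(V^{N_{2,0}})_{Z_1\text{-fs}} = J_{P_1}(V)^{N_{12,0}}$. The sketch above circumvents a direct calculation by appealing only to the universal property of finite-slope, together with the observation that $N_{12,0}$ commutes pointwise with $Z_1^+$ (since $N_{12} \subseteq M_1$ is centralised by $Z_1$), so the $N_{12,0}$-action passes functorially through the finite-slope construction. For part (1) the corresponding commutation holds because the ordinary-part projector is a limit of $z$-averaging operators with $z \in Z^+$, all of which commute with $N_{12,0}$, and because $N_{12,0}$-invariants preserve unitary admissibility by Proposition \ref{prop:invariants-continuous}. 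Everything else in the argument --- the existence of Iwahori-compatible good subgroups, the matching of plus-monoids, and the transitivity of finite-slope --- is either standard or already supplied by lemmas in the excerpt.
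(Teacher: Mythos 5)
Your argument for part (2) is essentially the paper's: the same semidirect decomposition $N_{2,0}=N_{1,0}\rtimes N_{12,0}$, the identification of $(J_{P_1}V)^{N_{12,0}}$ with $\mathcal{L}_{b,Z_1^+}\bigl(\Can(\widehat Z_1), V^{N_{2,0}}\bigr)$, and the transitivity lemma for finite-slope parts applied with $Y=Z_1$ and $Z=Z_2$. The one point you pass over too quickly is that your chain of equalities requires the two Hecke actions of the \emph{full} monoid $Z_2^+$ on $V^{N_{2,0}}$ to coincide: the action $z\circ v=\pi_{N_{2,0}}(zv)$ used to form $J_{P_2}(V)$, and the action $z\circ v=\pi_{N_{12,0}}\pi_{N_{1,0}}(zv)$ that the composite $J_{P_{12}}(J_{P_1}(-))$ actually produces. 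Checking $Z_1^+$-equivariance of the inclusion $V^{N_{2,0}}\hookrightarrow V^{N_{1,0}}$ does not settle this, since $Z_2^+$ is not contained in $Z_1$; the paper disposes of it in one line by observing that Haar measure on $N_{2,0}=N_{12,0}\ltimes N_{1,0}$ is the product of the Haar measures on the factors, and you should include that verification. For part (1) you genuinely diverge from the paper: you propose running the same direct argument for $\Ord_P$, whereas the paper instead uses that $\Ord_P$ is right adjoint to $\Ind_{\overline P}^G$ and reduces to the (easy) transitivity of parabolic induction. Your route is viable --- the paper explicitly remarks that it works --- but it carries the extra burden of establishing an $\Ord$-analogue of the finite-slope transitivity lemma for nested monoids, which nothing in the excerpt supplies; the adjunction argument is softer and avoids this entirely.
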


\begin{proof}
We begin by proving the second statement. We have $N_2 = N_1 \rtimes N_{12}$, where $N_{12} = N_2 \cap M_1$ is the unipotent radical of $P_{12}$. Let $N_{2, 0}$ be an open compact subgroup of $N_2$ which has the form $N_{1, 0} \rtimes N_{12, 0}$, for open compact subgroups of the two factors; such subgroups certainly exist, since the conjugation action of $N_1$ on $N_{12}$ is continuous.

For $i = 1, 2$ we write $M_i^+$ for the submonoid of elements $m \in M_i$ for which $m N_{i,0} m^{-1} \subseteq N_{i,0}$ and $m^{-1} \overline{N}_{i,0} m \subseteq \overline{N}_{i,0}$, and $Z_i = M_i^+ \cap Z_i$. Then we have $M_2^+ \subseteq M_1^+$, and in particular $Z_1^+ \subseteq Z_2^+$.

We have
\[ J_{P_1} V  = \mathcal{L}_{b, Z_1^+}\left(\Can(\widehat{Z}_1), V^{N_{1,0}}\right)\]
endowed with the action of $M_1 = Z_1 \times_{Z_1^+} M_1^+$ determined by the actions of $Z_1$ on $\Can(\widehat{Z}_1)$ and $M_1^+$ on $V^{N_{1,0}}$. The restriction of this action to $N_{12, 0}$ is simply the action on the right factor (since $N_{12,0} \subseteq M_{1, 0} \subseteq M_1^+$) and hence
\[ (J_{P_1} V)^{N_{12,0}} = \mathcal{L}_{b, Z_1^+} \left(\Can(\widehat{Z}_1), (V^{N_{1,0}})^{N_{12,0}}\right) = \mathcal{L}_{b, Z_1^+} \left(\Can(\widehat{Z}_1), V^{N_{2,0}}\right).\]

The Hecke operator construction of \cite[\S 3.4]{emerton-jacquet} gives two actions of $M_2^+$ on $V^{N_{2,0}}$, given respectively by $m \circ v = \pi_{N_{2,0}} m v$ and $m \circ v = \pi_{N_{12, 0}} \pi_{N_{1,0}} m v$, where the operators $\pi_{N_{i,0}}$ are the averaging operators with respect to Haar measure on the subgroups $N_{i,0}$. Since $N_{2,0} = N_{12,0} \ltimes N_{1,0}$, and the Haar measure on the product is the product of the Haar measures on the factors, these two actions coincide. Applying the preceding lemma with $Z = Z_2$ and $Y = Z_1$ gives the result.

The statement for the ordinary part functor can be proved along similar lines, but it is easier to note that the functor $\Ord_P$ is right adjoint to the parabolic induction functor $\Ind_{\overline{P}}^G$ \cite[4.4.6]{emerton-ord1}, for $\overline{P}$ an opposite parabolic to $P$. Since a composition of adjunctions is an adjunction, it suffices to check instead that $\Ind_{\overline{P}_1}^G \Ind_{\overline{P}_{12}}^{M_1} U = \Ind_{\overline{P}_2}^G U$ for any $U \in \Rep_{\rm u, adm}(M_2)$. We may identify $C(G, C(M_1, U))$ with $C(G \times M_1, U)$. Evaluating at $1 \in M_1$ gives a map to $C(G, U)$, and it is easy to check that this restricts to an isomorphism between the subspaces realising the two induced representations.
\end{proof}

\subsection{Hecke algebras and the canonical lifting}
\label{ssect:hecke}

We now turn to studying the Jacquet functor in a special case; later we will combine this with the transitivity result above to deduce a general statement. As before, let $\mathcal{G}$ be a reductive algebraic group over $L$, and let $\mathcal{H} = [\mathcal{G}, \mathcal{G}]$, a semisimple group. There is a bijection between parabolics of $\mathcal{G}$ and $\mathcal{H}$, given by $\mathcal{P} \mapsto \mathcal{P}' = \mathcal{P} \cap \mathcal{H}$ and $\mathcal{P}' \mapsto \mathcal{P} = N_\mathcal{G}(\mathcal{P}')$.

We also choose an opposite parabolic $\overline{\mathcal{P}}$, determining a Levi subgroup $\mathcal{M}$ of $\mathcal{P}$, and also a Levi $\mathcal{M}'$ of $\mathcal{P}'$ in the obvious way. Write $\mathcal Z_\mathcal M$, $\mathcal Z_{\mathcal M'}$ and $\mathcal Z_\mathcal G$ for the centres of these subgroups, so $\mathcal Z_\mathcal M$ is isogenous to $\mathcal Z_{\mathcal M'} \times \mathcal Z_\mathcal G$. As before, we use Roman letters for the $L$-points of these algebraic groups.

Let $H_0$ be an open compact subgroup of $H$. We say $H_0$ is {\it decomposed} with respect to $P'$ and $\overline{P}'$ if the product of the subgroups $M_0' = H_0 \cap M'$, $N_0 = H_0 \cap N$ and $\overline{N}_0 = H_0 \cap \overline{N}$ is $H_0$, for any ordering of the factors. 

We say an element $m \in M$ is {\it positive} (for $H_0$) if $m N_0 m^{-1} \subseteq N_0$ and $m^{-1} \overline{N}_0 m \subseteq \overline{N}_0$ (see \cite[\S 3.1]{bushnell-localization}). Let $M^\oplus \subseteq M$ be the monoid of positive elements, and $Z_M^\oplus$ its intersection with $Z_M$; and let $\HH^\oplus(M_0')$ denote the subalgebra of the Hecke algebra $\HH(M'_0)$ supported on $M'^+ = M^+ \cap M'$. Note that $M^\oplus$ is contained in the monoid $M^+$ of the previous section, and clearly has finite index therein.

We say an element $z \in Z_M$ is {\it strongly positive} if the sequences $z^n N_0 z^{-n}$ and $z^{-n} \overline{N}_0 z^n$ tend monotonically to $\{1\}$; if this holds, then $z^{-1}$ and $M^\oplus$ together generate $M$. Such elements exist in abundance; any element whose pairing with the roots corresponding to $P$ has sufficiently large valuation will suffice. In particular, there exist strongly positive elements in $Z_{M'}$.

\begin{proposition}
For any essentially admissible $G$-representation $V$, we have $J_P(V) = \left(V^{N_0}\right)_{Y-{\rm fs}}$, where $Y$ is any closed subgroup of $M$ that contains a strongly positive element. In particular, $J_P(V) = J_{P'}(V)$.
\end{proposition}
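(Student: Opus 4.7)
The strategy is to combine the transitivity lemma from this subsection with the universal property of the finite-slope-part functor. Recall from \cite{emerton-jacquet} that $J_P(V) = (V^{N_0})_{Z_M-{\rm fs}}$, where the Hecke action of the positive submonoid $Z_M^\oplus = Z_M \cap M^\oplus$ makes $V^{N_0}$ an object of $\Rep_{\rm top,c}(Z_M^\oplus)$. A strongly positive element is by definition central in $M$, so $z \in Y \cap Z_M$.

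First suppose $Y \subseteq Z_M$, and write $Y^\oplus = Y \cap M^\oplus \subseteq Y \cap Z_M^\oplus$. Applying the transitivity lemma with ambient abelian group $Z_M$ and subgroup $Y$ gives
\[J_P(V) = (V^{N_0})_{Z_M-{\rm fs}} = \bigl((V^{N_0})_{Y-{\rm fs}}\bigr)_{Z_M-{\rm fs}},\]
which exhibits $J_P(V)$ as a closed $Z_M$-equivariant subspace of $(V^{N_0})_{Y-{\rm fs}}$. For the reverse inclusion it suffices to show that $(V^{N_0})_{Y-{\rm fs}}$ itself admits the structure of a locally analytic $Z_M$-representation with a $Z_M^\oplus$-equivariant map to $V^{N_0}$; the universal property of $(-)_{Z_M-{\rm fs}}$ then supplies an inverse to the inclusion.

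To produce this structure: since $Z_M$ is abelian, the Hecke action of $Z_M^\oplus$ on $V^{N_0}$ commutes with the $Y^\oplus$-action, so functoriality of the finite-slope-part yields a $Z_M^\oplus$-action on $(V^{N_0})_{Y-{\rm fs}}$. The strongly positive $z$ acts invertibly from the $Y$-structure, and strong positivity of $z$ implies $wz^k \in Z_M^\oplus$ for $k$ sufficiently large (since $(wz^k)N_0(wz^k)^{-1} \subseteq N_0$ and the analogous condition on $\overline{N}_0$ eventually hold), so $w = (wz^k)z^{-k}$ defines the extension to all of $Z_M$; local analyticity is inherited from the $M$-action on $V$ together with the existing $Y$-structure. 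The general abelian $Y$ reduces to this by setting $Y_0 = Y \cap Z_M$, applying the preceding to get $(V^{N_0})_{Y_0-{\rm fs}} = J_P(V)$, then invoking transitivity: $(V^{N_0})_{Y-{\rm fs}} = \bigl((V^{N_0})_{Y_0-{\rm fs}}\bigr)_{Y-{\rm fs}} = \bigl(J_P(V)\bigr)_{Y-{\rm fs}} = J_P(V)$, the last equality because $J_P(V)$ is already a locally analytic $Y$-representation of the required type.

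For the ``in particular'' statement take $Y = Z_{M'}$: this is abelian, contained in $Z_M$ (as $Z_{M'}$ centralises both $Z_G$ and $M'$, hence $M = Z_G \cdot M'$), and contains strongly positive elements by the remarks preceding the proposition. Combined with $N \subseteq H$, hence $N_0 \cap H = N_0$, this gives $J_{P'}(V) = (V^{N_0})_{Z_{M'}-{\rm fs}} = J_P(V)$. The principal technical hurdle I anticipate is verifying that the $Z_M$-action constructed on $(V^{N_0})_{Y-{\rm fs}}$ is genuinely locally analytic rather than merely continuous; this requires careful control of the interplay between Emerton's Hecke formalism on $V^{N_0}$ and the locally analytic $M$-structure on $V$.
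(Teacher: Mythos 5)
Your strategy --- identify $J_P(V)$ with $(V^{N_0})_{Z_M-{\rm fs}}$, use the transitivity lemma to inject it into $(V^{N_0})_{Y-{\rm fs}}$, and then reverse the inclusion by extending the Hecke action of $Z_M^\oplus$ on $(V^{N_0})_{Y-{\rm fs}}$ to a $Z_M$-action via $w = (wz^k)z^{-k}$ and invoking the universal property of $(-)_{Z_M-{\rm fs}}$ --- is sound in outline, and is essentially an unpacking of the result the paper actually cites. But the step you defer as a ``technical hurdle'' is not a peripheral verification; it is the entire substance of the statement. To apply the universal property (\cite[3.2.4(ii)]{emerton-jacquet}) you must show that $(V^{N_0})_{Y-{\rm fs}}$, equipped with your extended action, is an object of $\Rep_{\rm la,c}^z(Z_M)$: not only must the $Z_M$-orbit maps be locally analytic (the point you flag), but the space must also be exhausted by $Z_M$-invariant BH-subspaces. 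Neither is formal. The $Y$-finite-slope construction only hands you $Y$-invariant BH-subspaces, and promoting these to $Z_M$-invariant ones is precisely where one needs the compactness properties of the transition maps on $V^{N_0}$ --- i.e.\ where the hypothesis that $V$ is \emph{essentially admissible} (rather than merely an object of $\Rep_{\rm top,c}(Z_M^+)$, which is all the transitivity lemma sees) actually enters. As written, your argument reduces the proposition to its hardest part and then stops.

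For comparison, the paper's proof is a two-line citation: \cite[Lemma 3.2.29]{emerton-jacquet} shows that $V^{N_0}$ with its Hecke $Z_M^+$-action lies in $\Rep_{\rm la,c}^z(Z_M^+)$ and satisfies the hypotheses of \cite[Prop.\ 3.2.28]{emerton-jacquet}, whose conclusion is exactly $(V^{N_0})_{Z_M-{\rm fs}} = (V^{N_0})_{Y-{\rm fs}}$ for a closed subgroup $Y$ containing a strongly positive element (strong positivity supplying the cofinality of $Y \cap M^\oplus$ in $M^\oplus$ that the proposition requires). So you should either cite those two results directly or carry out the analytic verification you postponed. Two smaller points: for $(-)_{Y-{\rm fs}}$ to be defined at all you need $Y \cap M^\oplus$ to be an open submonoid generating $Y$ as a group, which deserves a sentence (it follows from $Y \cap M_0$ together with the strongly positive element, under the implicit reading that $Y$ is abelian); and your identification $J_{P'}(V) = (V^{N_0})_{Z_{M'}-{\rm fs}}$ via $N \subseteq H$ and $Z_{M'} \subseteq Z_M$ is correct.
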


\begin{proof} For any open compact $N_0 \subseteq N$, \cite[lemma 3.2.29]{emerton-jacquet} and the discussion following it shows that $V^{N_0}$ is in the category denoted therein by $\operatorname{Rep}_{\rm la, c}^z(Z_M^+)$; thus the hypotheses of \cite[prop 3.2.28]{emerton-jacquet} are satisfied for the subgroup $Y = Z_{M'}$. The conclusion of that proposition then states that $J_P(V) = (V^{N_0})_{Z_M-{\rm fs}} = (V^{N_0})_{Y-{\rm fs}}$.
\end{proof}

We now lighten the notation somewhat by writing superscript $+$ for $\oplus$, since the proposition shows that the distinction between $M^+$ and $M^\oplus$ is unimportant from the perspective of Jacquet modules.

\begin{proposition}
Let $j$ be the morphism $\HH^+(M_0') \to \HH(H_0)$ constructed in \cite[\S 3.3]{bushnell-localization}. Then the natural inclusion $V^{H_0} \into V^{M_0' N_0}$ is $\HH^+(M_0')$-equivariant, where $\HH^+(M_0')$ acts via $j$ on the first space and via its inclusion into $\HH(M_0')$ on the second.
\end{proposition}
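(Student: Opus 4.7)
By $K$-linearity it suffices to verify the equivariance on basis elements $\phi = [M_0' m M_0']$ with $m \in M'^+$, since these span $\HH^+(M_0')$ as a $K$-vector space.

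The first step is to make $j(\phi)$ explicit. Bushnell's construction attaches to $\phi$ a scalar multiple of $[H_0 m H_0] \in \HH(H_0)$, the scalar being determined by the relative indices of various compact open subgroups so that the resulting map is a homomorphism of algebras. Using the Iwahori-type factorization $H_0 = N_0 M_0' \overline{N}_0$ together with the positivity of $m$ (which gives $mN_0m^{-1} \subseteq N_0$ and $\overline{N}_0 \subseteq m\overline{N}_0 m^{-1}$), one computes
\[ H_0 \cap m H_0 m^{-1} = (m N_0 m^{-1}) \cdot (M_0' \cap m M_0' m^{-1}) \cdot \overline{N}_0, \]
so that a system of coset representatives for $H_0 m H_0 / H_0$ is given by products $\{n\mu m\}$, where $n$ ranges over a set of representatives for $N_0/m N_0 m^{-1}$ and $\mu$ ranges over a set of representatives for $M_0'/(M_0' \cap m M_0' m^{-1})$.

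The second step is to compute both actions on a vector $v \in V^{H_0}$ and match them. On the one hand, the action of $j(\phi)$ via $\HH(H_0)$ is (up to the normalization scalar) the sum $\sum_{n,\mu}(n\mu m) v$. On the other hand, the $\HH^+(M_0')$-action on $V^{M_0' N_0}$ factors through the monoid action of $M'^+$ on $V^{N_0}$ provided by the Jacquet construction, namely $m \cdot v = \pi_{N_0}(mv) = \sum_n nm v$ (with $n$ running over the same coset representatives for $N_0/m N_0 m^{-1}$), followed by averaging over $M_0'/(M_0' \cap m M_0' m^{-1})$. Since $M' \subseteq M$ normalizes $N$ and $N_0 = H_0 \cap N$, the subgroup $M_0'$ normalizes $N_0$, so the substitution $n' = \mu n \mu^{-1}$ rewrites $\sum_{\mu,n} \mu n m v$ as $\sum_{\mu,n'} n' \mu m v$, which matches the first expression index by index.

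The principal technical obstacle is verifying that the scalar of proportionality in Bushnell's definition of $j$ is exactly the one dictated by Emerton's normalization of the averaging operator $\pi_{N_0}$; these conventions are set up precisely so that this type of equivariance holds, so the scalars must cancel, but recording them carefully is the only nontrivial bookkeeping. Once the normalizations are aligned, the coset-counting argument above gives the claimed identity.
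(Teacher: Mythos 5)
The paper's own proof of this proposition consists of the two words ``Easy check,'' and your coset computation is exactly the verification being alluded to: identify $j([M_0'mM_0'])$ with a suitably normalized $[H_0mH_0]$, use the decomposed structure of $H_0$ and the positivity of $m$ to split $H_0/(H_0\cap mH_0m^{-1})$ into $N_0/mN_0m^{-1}$ and $M_0'/(M_0'\cap mM_0'm^{-1})$ (the $\overline{N}_0$-factor being absorbed), and match the resulting double sum against $\sum_\mu \mu\,\pi_{N_0}(mv)$. Your argument is correct and takes the only reasonable direct route, and you have rightly isolated the single delicate point (aligning Bushnell's normalization of $j$ with Emerton's normalization of $\pi_{N_0}$), so the proposal simply supplies the details the paper omits.
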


\begin{proof}
Easy check.
\end{proof}

\begin{proposition}\label{prop:comparison1}
 For any essentially admissible locally analytic $G$-representation $V$ which is smooth as an $H$-representation, the above inclusion induces an isomorphism 
\[(V^{H_0})_{Z_{M'}-{\rm fs}} \stackrel\sim\to (V^{M_0' N_0})_{Z_{M'}-{\rm fs}} = J_{P}(V)^{M_0'}.\]
Moreover, there exists a direct sum decomposition
\[ V^{H_0} = (V^{H_0})_{Z_{M'}-{\rm fs}} \oplus (V^{H_0})_{Z_{M'}-{\rm null}}\]
where the summands are closed subspaces of $V^{H_0}$, stable under the action of $Z_G$ and $\HH(M_0')$.
\end{proposition}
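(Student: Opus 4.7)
The proposition has two parts: the isomorphism, and the direct-sum decomposition; I treat them in turn.

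I would first identify the right-hand side. Combining the preceding proposition (which gives $J_P(V) = (V^{N_0})_{Z_{M'}-\mathrm{fs}}$) with the commutation of $(-)^{M_0'}$ with the finite-slope functor $(-)_{Z_{M'}-\mathrm{fs}} = \mathcal{L}_{b, Z_{M'}^+}(\Can(\widehat Z_{M'}), -)$ --- which holds because $M_0'$ commutes with $Z_{M'}$ and acts trivially on $\Can(\widehat Z_{M'})$ --- one obtains $J_P(V)^{M_0'} = (V^{M_0' N_0})_{Z_{M'}-\mathrm{fs}}$.

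The inclusion $V^{H_0} \hookrightarrow V^{M_0' N_0}$ is $\HH^+(M_0')$-equivariant by the preceding proposition, hence $Z_{M'}^+$-equivariant, inducing a morphism of finite-slope parts. Injectivity is immediate from the left-exactness of $\mathcal{L}_{b, Z_{M'}^+}(\Can(\widehat Z_{M'}), -)$. For surjectivity, fix a strongly positive $z \in Z_{M'}$; under the canonical lifting of Bushnell, both actions of $z$ coincide with the averaging operator $v \mapsto \pi_{N_0}(z v)$. For any $v \in V^{M_0' N_0}$, smoothness of $V$ as an $H$-representation produces an open compact $\overline{N}_0^{(v)} \subseteq \overline{N}_0$ fixing $v$. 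Since $z^{-n} \overline{N}_0 z^n$ shrinks to $\{1\}$ by strong positivity, for $n$ sufficiently large one has $z^n \overline{N}_0^{(v)} z^{-n} \supseteq \overline{N}_0$, so $z^n v$ is $\overline{N}_0$-fixed. An Iwahori-decomposition argument would then show $\pi_{N_0}(z^n v) \in V^{H_0}$. As $z^n$ acts invertibly on $(V^{M_0' N_0})_{Z_{M'}-\mathrm{fs}}$, every finite-slope vector is $z^{-n}$ times a vector in the image of $V^{H_0}$, yielding surjectivity.

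For the direct-sum decomposition, I would invoke slope-decomposition for the compact operator $j(z)$ on $V^{H_0}$. Concretely, one realises $V^{H_0}$ as a Fr\'echet limit of $j(z)$-stable Banach subspaces coming from the sequence of good analytic open subgroups $(H_n)$, and transfers the compactness of $z$ established on the corresponding approximations $U_n$ to $V^{M_0'N_0}$ (used in the proof of the previous section's theorems) to each such Banach piece, via the closed embedding $V^{H_0} \hookrightarrow V^{M_0'N_0}$. Riesz theory on each piece, passed to the limit, gives a canonical closed decomposition $V^{H_0} = (V^{H_0})_{Z_{M'}-\mathrm{fs}} \oplus (V^{H_0})_{Z_{M'}-\mathrm{null}}$, independently of the strongly positive $z$ chosen. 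Both summands are $Z_G$- and $\HH(M_0')$-stable because both algebras commute with $z$.

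The main obstacle is the Iwahori-decomposition verification: in the averaged sum $\pi_{N_0}(z^n v) = \sum_u u z^n v$ indexed by $u \in N_0 / z^n N_0 z^{-n}$, one must check that conjugation by $\overline{n} \in \overline{N}_0$ acts by the bijection $u \mapsto u'$ defined by $\overline{n} u = u' m' \overline{n}'$ (the Iwahori decomposition of $\overline{n} u \in H_0$), and that after invoking $M_0'$- and $\overline{N}_0$-fixity of $z^n v$ the reindexed sum equals the original, forcing $\overline{N}_0$-fixity of the average. A subsidiary technical point is establishing compactness of $j(z)$ on $V^{H_0}$ in a form sufficient to apply Riesz-style slope decomposition within the compact-type framework.
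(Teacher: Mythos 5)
Your first part is essentially the paper's own argument: injectivity comes from the left-exactness of the finite-slope functor, and surjectivity from the observation that smoothness over $H$ plus strong positivity of $z$ forces some power of $z$ (acting through $j$) to carry any given vector of $V^{M_0'N_0}$ into $V^{H_0}$, so that the quotient $V^{M_0'N_0}/V^{H_0}$ is $Z_{M'}^+$-torsion and its finite-slope part vanishes. The ``Iwahori-decomposition verification'' you flag as the main obstacle is precisely what the standing hypothesis that $H_0$ is decomposed is for: it guarantees that the averaging operator $\pi_{N_0}$ preserves $V^{\overline{N}_0 M_0'}$, so $z\circ v=\pi_{N_0}(zv)\in V^{H_0}$ as soon as $zv$ is fixed by $\overline{N}_0 M_0'$. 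So the first half is sound and matches the paper.

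The gap is in the direct-sum decomposition. You propose Riesz theory for a ``compact operator $j(z)$'' on $V^{H_0}$, but the compactness you need is neither available nor supplied by anything earlier in the paper: the compactness results of the previous section concern $A_n$-compactness of transition maps between the coinvariant modules $U_n$ attached to an \emph{admissible locally analytic} $V$, not the operator $j(z)$ on the invariants of a representation that is merely smooth over $H$; and ``transferring compactness via the closed embedding $V^{H_0}\into V^{M_0'N_0}$'' presupposes a compactness statement on $V^{M_0'N_0}$ that has not been established either. In fact no analytic input is needed. The correct tool is Bushnell's localization theorem \cite{bushnell-localization}, a purely algebraic statement about smooth representations: it provides an integer $n$, depending only on $P$, $H_0$ and $z$, such that $V^{H_0}=z^nV^{H_0}\oplus\operatorname{Ker}(z^n\mid V^{H_0})$ with $z$ invertible on the first summand, uniformly over all smooth $H$-representations. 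Closedness of the two summands then follows from continuity of $z^n$ (for the kernel) and from the fact that $z^n$ is a morphism of essentially admissible $Z_G$-representations, hence strict with closed image; stability under $Z_G$ and $\HH(M_0')$ holds because these commute with $j(z)$, as you say. Without Bushnell's theorem (or some genuine substitute for the missing compactness), your argument for the decomposition does not go through.
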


\begin{proof}
Let $Q = V^{M_0' N_0} / V^{H_0}$. By the left-exactness of the finite slope part functor \cite[3.2.6(ii)]{emerton-jacquet}, there is a closed embedding
\[ (V^{M_0' N_0})_{Z_{M'}-{\rm fs}} / (V^{H_0})_{Z_{M'}-{\rm fs}} \into Q_{Z_{M'}-{\rm fs}}.\]
But since $V$ is smooth as an $H$-representation, every element $v \in V^{M_0' N_0}$ is in fact in $V^{U M_0' N_0}$ for some open $U \subseteq \overline{N}$; any such $U$ contains a $Z_{M'}^+$-conjugate of $\overline{N}_0$, so there is some $z \in Z^+$ such that $z v \in V^{\overline{N}_0 M_0'}$. Our hypothesis that $H_0$ is decomposed implies that the averaging operator $\pi_{N_0}: V^{\mathfrak{n}} \to V^{N_0}$ preserves $V^{\overline{N}_0 M_0'}$, so $z \circ v = \pi_{N_0}(zv) \in V^{H_0}$. Therefore $Q$ is $Z^+_{M'}$-torsion, and thus clearly $Q_{Z-{\rm fs}} = 0$.

For the second statement, let $z$ be any strongly positive element of $Z_{M'}$. By \cite[Theorem 1]{bushnell-localization}, there exists an integer $n$ (depending only on $P$, $H_0$ and $z$) such that for any smooth $H$-representation $V$, the action of $z$ on $V^{H_0}$ via $j$ satisfies 
\[ V^{H_0} = z^n V^{H_0} \oplus \operatorname{Ker}(z^n \mid V^{H_0}),\]
with $z$ invertible on the subspace $z^n V^{H_0}$. For representations $V$ as in the statement, the subspace $\operatorname{Ker}(z^n \mid V^{H_0})$ is clearly closed, and moreover $z^n$ gives a continuous map from the essentially admissible $Z_G$-representation $V^{H_0}$ to itself, so its image is also closed.
\end{proof}

In particular, since $V^{H_0}$ is an essentially admissible $Z_G$-representation, $J_P(V)^{M_0'}$ is essentially admissible as a $Z_G$-representation, not just a representation of the larger group $Z_G \times Z_{M'} / (Z_{M'} \cap H_0)$.

\subsection{Jacquet modules of locally isotypical representations}

We now extend the results on $H$-smooth representations above to certain locally $H$-isotypical representations. 

\begin{proposition}
If $W$ is a twist of an absolutely irreducible algebraic representation of $\mathcal{G}$, and $P = MN$ is a parabolic subgroup of $G$ with $[M, M] = D$, then $\End_{\mathfrak{d}}(W^N) = K$, so in particular the $M$-representation $W^N$ is $D$-good.
\end{proposition}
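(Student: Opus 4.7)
\medskip

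\noindent\textbf{Proof plan.} Since characters of $\mathcal{G}$ are trivial on $\mathcal{H} = [\mathcal{G},\mathcal{G}] \supseteq \mathcal{N}, \mathcal{D}$, a twist $W = W_0 \otimes \chi$ satisfies $W^N \cong W_0^N$ as a $\mathcal{D}$-representation. So I can reduce immediately to the case where $W$ is itself an absolutely irreducible algebraic representation of $\mathcal{G}$. Next I want to identify $W^N$ (invariants under the $L$-points of $\mathcal{N}$) with $W^\mathcal{N}$ (the algebraic invariants): since $\mathcal{N}$ is a connected unipotent group over the characteristic-zero field $L$, its $L$-points are Zariski dense via $\exp$, and the identification is automatic.

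Once that is done, the key classical input is that $W^\mathcal{N}$ is an absolutely irreducible algebraic representation of $\mathcal{M}$: fixing a Borel $\mathcal{B} \subseteq \mathcal{P}$ and letting $\lambda$ be the highest weight of $W$, the space $W^\mathcal{N}$ is the $\mathcal{M}$-representation of highest weight $\lambda$ (with respect to $\mathcal{B} \cap \mathcal{M}$), and this is absolutely irreducible. By Schur's lemma, $\End_\mathcal{M}(W^\mathcal{N}) = K$.

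It remains to identify $\End_\mathfrak{d}(W^N)$ with $\End_\mathcal{M}(W^\mathcal{N})$. A $K$-linear endomorphism $\phi$ of $W^N$ commuting with $\mathfrak{d}$ extends, via the algebraic $\mathcal{D}$-action on $W^\mathcal{N}$, to a $\mathcal{D}$-equivariant endomorphism: the map $\mathcal{D} \to \End_K(W^\mathcal{N})$, $g \mapsto g\phi g^{-1}$, is a morphism of varieties whose differential at the identity vanishes (as $[\mathfrak{d},\phi] = 0$), hence is constant because $\mathcal{D}$ is connected and we are in characteristic zero. Then $\phi$ commutes with $\mathcal{D}$, and since the center $\mathcal{Z}$ acts on the irreducible algebraic $\mathcal{M}$-representation $W^\mathcal{N}$ by a character, $\phi$ automatically commutes with $\mathcal{M} = \mathcal{Z}\cdot\mathcal{D}$ as well. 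Thus $\phi \in \End_\mathcal{M}(W^\mathcal{N}) = K$.

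Finally, for $D$-goodness I need $\End_U(W^N) = \End_D(W^N) = \End_M(W^N)$ for every open compact $U \subseteq D$. Any such $U$ has Lie algebra $\mathfrak{d}$, so $\End_U(W^N) \subseteq \End_\mathfrak{d}(W^N)$, and conversely the algebraic-to-analytic argument above shows an $\mathfrak{d}$-equivariant endomorphism is $\mathcal{D}(L) \supseteq U$-equivariant. All three rings therefore equal $\End_\mathfrak{d}(W^N) = K$. The main obstacle is the irreducibility of $W^\mathcal{N}$ as an $\mathcal{M}$-representation in step two, but this is standard highest-weight theory and requires no original input.
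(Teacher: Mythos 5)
Your proposal is correct and follows essentially the same route as the paper: reduce the twist away, use highest-weight theory (via a Borel $\mathcal{B}\subseteq\mathcal{P}$ over a splitting field) to see that $W^{\mathcal N}$ is absolutely irreducible over $\mathcal{M}$, and then pass from $\mathcal M$ down to $\mathfrak d$. The only cosmetic difference is at the last step: the paper deduces that $W^N$ is absolutely irreducible as a $D$-representation using the isogeny $\mathcal D\times\mathcal Z\to\mathcal M$, whereas you show directly that a $\mathfrak d$-equivariant endomorphism is $\mathcal D$- and hence $\mathcal M$-equivariant --- you are in fact more explicit than the paper about why $\mathfrak d$-, $U$- and $D$-equivariance all coincide here.
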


\begin{proof}
The twist is of no consequence, so suppose that $W$ is algebraic. Let us choose a maximal torus $T$ in $M$, and a field $K' \supset K$ over which $M$ is split; then there is a Borel subgroup $B \subseteq P$ defined over $K'$ with Levi factor $T$. The theory of highest weights then shows that $W$ is absolutely irreducible if and only if the highest weight space of $W$ is 1-dimensional; applying this condition to $W$ and to the $M$-representation $W^N$, we deduce that $W^N$ is absolutely irreducible as an $M$-representation. Since $M$ is isogenous to $D \times Z(M)$ and all absolutely irreducible representations of $Z(M)$ are one-dimensional, it follows that $W^N$ is in fact absolutely irreducible as a $D$-representation.
\end{proof}

\begin{proposition}\label{prop:liso-jacquet1} If $W \in \Rep_{\rm la, fd}(G)$ is $H$-good, with $B = \End_{\mathfrak{h}}(W) = \End_G W$, and furthermore $W^{\mathfrak{n}} = W^N$, then for any $X \in \Rep_{\rm la, c}^z(G)$ which is smooth as an $H$-representation and has a right action of $B$, we have
\[ J_P(X \otimes_B W) = J_P(X) \otimes_B W^N.\]
\end{proposition}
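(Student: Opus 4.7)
The strategy is to construct the asserted isomorphism using the definition $J_P(V) = (V^{N_0})_{Z^+-{\rm fs}}$ for sufficiently small open compact $N_0 \subseteq N$. The map is produced by the $G$-equivariant inclusion $W^N \hookrightarrow W$: this induces $X \otimes_B W^N \hookrightarrow X \otimes_B W$, and since $N$ acts trivially on $W^N$ while $W^N$ is smooth for $Z^+$ (being a finite-dimensional $M$-representation), the functor $J_P$ commutes with tensoring by $W^N$, yielding
\[J_P(X) \otimes_B W^N = J_P(X \otimes_B W^N) \longrightarrow J_P(X \otimes_B W).\]
Injectivity of this map follows from left-exactness of $J_P$ applied to $0 \to X \otimes_B W^N \to X \otimes_B W$.

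For surjectivity, I would use the $\mathfrak{n}$-filtration of $W$. Since $\mathfrak{n}$ is nilpotent and $B = \End_G W$ commutes with the $G$-action, $W$ admits a filtration $0 = W_0 \subset W_1 = W^N \subset W_2 \subset \cdots \subset W_k = W$ by $B$-stable, $P$-stable subspaces with $\mathfrak{n} W_{i+1} \subseteq W_i$; the hypothesis $W^{\mathfrak{n}} = W^N$ ensures that the bottom piece is exactly $W^N$. Taking the short exact sequence $0 \to X \otimes_B W^N \to X \otimes_B W \to X \otimes_B (W/W^N) \to 0$ and applying $J_P$, the claim reduces (by induction on $k$) to showing that the connecting map $J_P(X \otimes_B W) \to J_P(X \otimes_B (W/W^N))$ vanishes, or equivalently that every element of $(X \otimes_B W)^{N_0}_{Z^+-{\rm fs}}$ already lies in the image of $X^{N_0} \otimes_B W^N$.

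The main obstacle is this vanishing step. The chief difficulty is that $X$ is not smooth as a $G$-representation---only as an $H$-representation---so for generic $n \in N_0$ and $x \in X$ one has $nx \neq x$. Consequently, an $N_0$-invariant element $v = \sum_i x_i \otimes w_i$ in $X \otimes_B W$ will typically have the non-invariance of the $x_i$ cancelling against the non-trivial $\mathfrak{n}$-action on the $w_i$ modulo $W^N$, producing genuine ``mixed'' invariants that are not of the form $X^{N_0} \otimes_B W^N$ on the nose. The resolution should be that after passing to the finite-slope part $(-)_{Z^+-{\rm fs}}$, these mixed contributions can be identified with (or rewritten in terms of) elements of $J_P(X) \otimes_B W^N$ via the action of suitably positive $z \in Z^+$, whose Hecke operator shifts decouple the $\mathfrak{n}$-filtration layers. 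This is the crux of the proof, and parallels Casselman's classical identification $J_P(V \otimes W) = J_P(V) \otimes W^N$ in the smooth setting, transported to Emerton's finite-slope formalism via the $B$-tensor product and the $H$-smoothness hypothesis on $X$ that replaces the smoothness used classically.
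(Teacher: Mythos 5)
Your construction of the canonical map $J_P(X)\otimes_B W^N \to J_P(X\otimes_B W)$ and the injectivity argument are fine, but the surjectivity step --- which you yourself flag as ``the crux'' --- is not actually proved: you describe a connecting-map-vanishing strategy via the $\mathfrak{n}$-filtration and then say the ``mixed'' invariants ``should'' be absorbed after passing to finite-slope parts via positive elements of $Z^+$. That is not an argument, and as written the proof has a genuine gap precisely where the hypotheses of the proposition are supposed to enter.

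The gap stems from a misreading of the smoothness hypothesis. Here $H$ is the derived group $[G,G]$ (or more generally contains $N$), so ``$X$ is smooth as an $H$-representation'' implies $X$ \emph{is} smooth as an $N$-representation: every $x\in X$ is fixed by some open compact subgroup of $N$. The difficulty you describe --- that ``for generic $n\in N_0$ and $x\in X$ one has $nx\neq x$'', producing mixed $N_0$-invariants in $X\otimes_B W$ that are not visibly in $X^{N_0}\otimes_B W^N$ --- therefore does not arise. The paper's proof is a direct computation of invariants in the style of Proposition \ref{prop:liso-functors}: given $v\in (X\otimes_B W)^{N_0}$, smoothness puts $v\in X^{N_0'}\otimes_B W$ for some open normal $N_0'\subseteq N_0$; on that subspace $N_0'$ acts only through $W$, so $v\in X^{N_0'}\otimes_B W^{N_0'}$; and $W^{N_0'}=W^{\mathfrak{n}}=W^N$ by hypothesis (this is where $W^{\mathfrak{n}}=W^N$ is used --- not, as in your sketch, merely to identify the bottom layer of a filtration). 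Averaging back up, $(X\otimes_B W)^{N_0}=X^{N_0}\otimes_B W^N$ on the nose, and passing to finite-slope parts gives the result. Your filtration-plus-Hecke-shift route is modelled on Casselman's argument for representations that are not $N$-smooth; in the present setting it is both unnecessary and, as presented, incomplete.
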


\begin{proof} Compare \cite[4.3.4]{emerton-jacquet}. Since $X$ is smooth as an $H$-representation it is certainly smooth as an $N$-representation. Arguing as in the proof of proposition \ref{prop:liso-functors}, we have $(X \otimes_B W)^{N_0} = X^{N_0} \otimes_B W^{N_0}$, which by assumption equals $X^{N_0} \otimes_B W^{N}$. Passing to finite-slope parts now yields the result.
\end{proof}

The condition $W^\mathfrak{n} = W^N$ is certainly satisfied for any $W$ that is algebraic as a representation of $N$ (since any open subgroup of $N$ is Zariski-dense). 

\begin{proposition}\label{prop:liso-jacquet2}
 Let $W$ be a twist of an absolutely irreducible algebraic representation of $G$, and let $V \in \Rep_{\rm la,c}^z(P)$ be locally $(H, W)$-isotypical. Then $J_P(V)$ is locally $(D, W^N)$-isotypical, and 
\[ \Hom_{\mathfrak{d}}(W^N,  J_P(V)) = J_P(\Hom_{\mathfrak{h}}(W, V)).\]
\end{proposition}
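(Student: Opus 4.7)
The proof proceeds by reducing to Proposition \ref{prop:liso-jacquet1} via the equivalence of categories from Proposition \ref{prop:liso-functors}. Since $W$ is a twist of an absolutely irreducible algebraic representation, Schur's lemma gives $B := \End_G(W) = K$, and the argument of the preceding proposition (applied to the degenerate parabolic $P = G$) shows that $W$ is $H$-good. Since $V$ lies in particular in $\Rep_{\rm la,c}(H)$, Proposition \ref{prop:stsm-embedding} upgrades the hypothesis that $V$ is locally $(H, W)$-isotypical to a topological isomorphism of $P$-representations
$$V \;\cong\; X \otimes_K W, \qquad X := \Hom_{\mathfrak{h}}(W, V),$$
where $X$ is strictly $H$-smooth.

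Since $W$ is algebraic, $W^{\mathfrak{n}} = W^N$ (by the remark following Proposition \ref{prop:liso-jacquet1}), and that proposition therefore applies to give
$$J_P(V) \;\cong\; J_P(X) \otimes_K W^N.$$
By the preceding proposition, $W^N$ is a $D$-good $M$-representation with $\End_{\mathfrak{d}}(W^N) = K$. Granting for the moment the claim that $J_P(X)$ is strictly $D$-smooth as an $M$-representation, the equivalence of categories --- applied now with $(G, H, W)$ replaced by $(M, D, W^N)$ --- shows that $J_P(V)$ is locally $(D, W^N)$-isotypical and produces the chain of identifications
$$\Hom_{\mathfrak{d}}(W^N, J_P(V)) \;=\; \Hom_{D-{\rm st.sm}}(W^N, J_P(X) \otimes_K W^N) \;=\; J_P(X) \;=\; J_P(\Hom_{\mathfrak{h}}(W, V)),$$
where the first equality is Proposition \ref{prop:stsm-embedding} and the second is the inverse direction of the equivalence.

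The principal obstacle is the claim that $J_P(X)$ is $D$-smooth. Since $X$ is $H$-smooth and $D \subseteq H$, the $D$-action on $X$, and hence on $X^{N_0}$, is smooth; moreover the $Z^+$-action on $X^{N_0}$ commutes with the action of the open compact $D_0 = D \cap M_0'$, so the latter descends by functoriality to $J_P(X) = (X^{N_0})_{Z^+\text{-}{\rm fs}}$. To see the inherited action is smooth, one argues in the style of Proposition \ref{prop:comparison1}: for each decomposed open compact $H_0 \subseteq G$ with $M_0' = H_0 \cap M$, left-exactness of $(-)_{\rm fs}$ yields a closed embedding $(X^{H_0})_{Z_{M'}\text{-}{\rm fs}} \hookrightarrow J_P(X)^{M_0'}$, and the proof of Proposition \ref{prop:comparison1} (which uses only $H$-smoothness of the representation, not essential admissibility) shows this is an isomorphism. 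Exhausting $X$ by its subspaces $X^{H_0}$ as $H_0$ shrinks then gives $J_P(X) = \bigcup_{M_0'} J_P(X)^{M_0'}$, which furnishes $M_0$-smoothness and hence $D$-smoothness of $J_P(X)$. The delicate technical point is the compatibility of $(-)_{\rm fs}$ with this exhaustion, which ultimately rests on the concrete realisation of $(-)_{\rm fs}$ as an internal $\Hom$-space and on the fact that $D$ commutes with $Z$ inside $M$.
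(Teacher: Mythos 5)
Your proposal is correct and follows essentially the same route as the paper: set $X = \Hom_{\mathfrak{h}}(W,V)$, use Proposition \ref{prop:liso-functors} to write $V = X \otimes_K W$, apply Proposition \ref{prop:liso-jacquet1} (with the remark that $W^{\mathfrak{n}} = W^N$ for algebraic $W$) to get $J_P(V) = J_P(X) \otimes_K W^N$, and conclude by the converse direction of the equivalence. Your final paragraph verifying that $J_P(X)$ is $D$-smooth is a worthwhile elaboration of a hypothesis the paper's proof leaves implicit when invoking the converse of Proposition \ref{prop:liso-functors}.
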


\begin{proof}
 Let $X = \Hom_{\mathfrak{h}}(W, V)$. By proposition \ref{prop:liso-functors}, we have $V = X \otimes_K W$; so by proposition \ref{prop:liso-jacquet1} and the remark following, $J_P(V) = J_P(X) \otimes_K W^N$. Since $W^N$ is $D$-good, we can apply the converse implication of proposition \ref{prop:liso-functors} to deduce that $J_P(X) = \Hom_{\mathfrak{d}}(W^N, J_P(V))$ as required.
\end{proof}

\subsection{Combining the constructions}

We now summarize the results of the above analysis.

\begin{theorem}\label{thm:maintheorem-local} For any $V \in \Rep_{\rm ess}(G)$, we have:
\begin{enumerate}
 \item For any parabolic subgroup $P \subseteq G$ with Levi subgroup $M$, any finite-dimensional $W \in \Rep_{\rm la,c}(M)$, and any open compact subgroup $U \subseteq D = [M, M]$, there is a coherent sheaf $\mathcal{F}(V, P, W, U)$ on $\widehat{Z(M)}$ with a right action of $\HH(U)$, whose fibre at a character $\chi \in \widehat{Z(M)}$ is isomorphic (as a right $\HH(U)$-module) to the dual of the space $\Hom_{U}(W, J_P V)^{Z(M) = \chi}$. In particular, a character $\chi$ lies in the subvariety $\mathcal{S}(V, P, W, U) = \operatorname{support} \mathcal{F}(V, P, W, U)$ if and only if this eigenspace is nonzero. 
 \item If $V \in \Rep_{\rm la,ad}(G)$, then the projection $\mathcal{S}(V, P, W, U) \to (\operatorname{Lie} Z)'$ has discrete fibres.
 \item If $V$ is isomorphic as an $H$-representation to $\Cla(H)^m$ for some $m$ and some open compact $H \subseteq G$, then $\mathcal{S}(V, P, W, U)$ is equidimensional of dimension $\dim Z$.
 \item If $P_1, P_2$ are parabolics with $P_1 \supseteq P_2$ as above, $W$ is an absolutely irreducible algebraic representation of $M_1$, and $U$ is an open compact subgroup of $D_1$ which is decomposed with respect to the parabolic $P_{2} \cap D_1$, then there is a decomposition
\[ \mathcal{F}(V, P_1, U, W) = \mathcal{F}(V, P_1, U, W)_{Z_2-{\rm null}} \oplus \mathcal{F}(V, P_1, U, W)_{Z_2-{\rm fs}},\]
where the latter factor is isomorphic to a quotient of the pushforward to $Z_1$ of the sheaf 
\[ \mathcal{F}(V, P_2, W^N, U \cap D_2)\]
on $Z_2$.
\end{enumerate}
\end{theorem}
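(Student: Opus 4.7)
For part (1), the plan is to invoke Proposition \ref{prop:weak-jacquet}(ii) applied with $\Gamma = U$: since $J_P V \in \Rep_{\rm ess}(M)$, the space $\Hom_U(W, J_P V) = \Hom_{\mathfrak{d}}(W, J_P V)^U$ is essentially admissible as a $Z$-representation, with a right $\HH(U)$-action coming functorially from convolution on $U$-invariants. The anti-equivalence of \cite[2.3.2]{emerton-interpolation} between $\Rep_{\rm ess}(Z)$ and coherent sheaves on $\widehat Z$ then produces the sheaf $\mathcal{F}(V,P,W,U)$ with the stated fibre description. Parts (2) and (3) are immediate applications of the two theorems of the previous section to the representation $\Hom_U(W, J_P V)$: the support of $\mathcal{F}$ coincides with $\Exp \Hom_U(W,J_PV)$, so the discrete-fibre and equidimensionality statements transport directly.

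For part (4), the strategy is to combine the transitivity of Jacquet functors (Theorem \ref{thm:transitivity}(ii)) with Proposition \ref{prop:comparison1} and the Morita-type results of Propositions \ref{prop:liso-functors}, \ref{prop:liso-jacquet1}, \ref{prop:liso-jacquet2}. Transitivity gives $J_{P_2} V = J_{P_{12}}(J_{P_1}V)$, where $P_{12} = P_2 \cap M_1$ has Levi $M_2$ and unipotent radical $N_{12}$. Writing $V_1 = J_{P_1} V$, the absolute irreducibility of $W$ makes it $D_1$-good with $\End_{M_1} W = K$, so Proposition \ref{prop:liso-functors} gives $\Hom_U(W, V_1) = X^U$ for $X := \Hom_{D_1-{\rm st.sm}}(W, V_1)$, which is smooth as a $D_1$-representation. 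Proposition \ref{prop:comparison1}, applied inside $M_1$ with parabolic $P_{12}$, derived subgroup $D_1$, and $H_0 = U$ (decomposed with respect to $P_{12} \cap D_1$ by hypothesis), then produces a $Z_1 \times \HH(U)$-equivariant direct sum decomposition
\[ X^U = (X^U)_{Z_{M'}-{\rm fs}}\oplus(X^U)_{Z_{M'}-{\rm null}} \]
together with an identification of the finite-slope summand with $J_{P_{12}}(X)^{M_0'}$, where $M' = M_2 \cap D_1$ and $M_0' = U \cap M'$. Sheafifying via the anti-equivalence yields the claimed decomposition of $\mathcal{F}(V,P_1,W,U)$.

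To exhibit the $Z_2$-finite-slope factor as a quotient of the pushforward of $\mathcal{F}(V,P_2,W^{N_{12}},U\cap D_2)$ along the natural map $\widehat{Z_2} \to \widehat{Z_1}$ (dual to $Z_1 \hookrightarrow Z_2$), the plan is to apply Proposition \ref{prop:liso-jacquet1} to write $J_{P_{12}}(X\otimes_K W) = J_{P_{12}}(X)\otimes_K W^{N_{12}}$ (using $W^{\mathfrak{n}_{12}}=W^{N_{12}}$ since $W$ is algebraic). The closed embedding $X\otimes_K W\hookrightarrow V_1$ from Proposition \ref{prop:liso-functors} yields a closed embedding $J_{P_{12}}(X)\otimes_K W^{N_{12}} \hookrightarrow J_{P_{12}}(V_1)=J_{P_2}V$. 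Applying $\Hom_{D_2-{\rm st.sm}}(W^{N_{12}},-)$ and using the converse direction of Proposition \ref{prop:liso-functors} together with $D_2$-goodness of $W^{N_{12}}$ gives an injection $J_{P_{12}}(X)\hookrightarrow \Hom_{D_2-{\rm st.sm}}(W^{N_{12}},J_{P_2}V)$. Taking $U\cap D_2$-invariants and combining with the tautological inclusion $J_{P_{12}}(X)^{M_0'}\subseteq J_{P_{12}}(X)^{U\cap D_2}$ (since $U\cap D_2\subseteq M_0'$) will yield the desired injection at the level of $Z_1$-representations, whose dual, under the anti-equivalence and pushforward, is the claimed surjection of sheaves.

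The main obstacle will be the careful handling of the two subgroups $M_0' = U \cap M_2$ (appearing in Proposition \ref{prop:comparison1}) and $U \cap D_2$ (appearing in the theorem statement); their difference lies in $Z_2 \cap D_1$, a compact central group whose treatment controls how sharply the constructed map represents the $Z_2$-finite-slope factor. Checking that the decomposition produced by Proposition \ref{prop:comparison1} is compatible with pushforward from $\widehat{Z_2}$ to $\widehat{Z_1}$, and that the resulting map is genuinely a surjection of coherent sheaves rather than merely a subquotient relation, is where the most care will be needed.
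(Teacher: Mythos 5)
Your proposal is correct and follows essentially the same route as the paper: parts (1)--(3) are direct applications of Proposition \ref{prop:weak-jacquet}(ii), the anti-equivalence with coherent sheaves on $\widehat Z$, and the two theorems of the preceding section, while part (4) is obtained by combining Theorem \ref{thm:transitivity} with Proposition \ref{prop:comparison1} applied to $\Hom_{\mathfrak{d}_1}(W, J_{P_1}V)$ and the locally isotypical machinery (your unpacking of Proposition \ref{prop:liso-jacquet2} into Propositions \ref{prop:liso-functors} and \ref{prop:liso-jacquet1} is exactly how that proposition is proved). The point you flag about $U\cap M_2'$ versus $U\cap D_2$ is handled in the paper simply by the inclusion of invariants $\Hom_{U\cap M_2'}(W^{N_{12}},\,\cdot\,)\subseteq \Hom_{U\cap D_2}(W^{N_{12}},\,\cdot\,)$, giving a closed $Z_2$-subrepresentation whose dual is the asserted quotient sheaf.
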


\begin{proof}
 The only statement still requiring proof is the last one. Let $Y = (J_{P_1} V)_{D_1,W-{\rm liso}}$. The closed embedding $Y \into J_{P_1}(V)$ induces by functoriality a closed embedding $J_{P_{12}} Y \into J_{P_{12}}(J_{P_1} V)$. The right-hand side is simply $J_{P_{2}} V$, by theorem \ref{thm:transitivity}. Thus we have a closed embedding
\[ \Hom_{\mathfrak{d}_2}(W^N, J_{P_{12}} Y) \into \Hom_{\mathfrak{d}_2}(W^N, J_{P_{2}} V).\]
The left-hand side is isomorphic, by proposition \ref{prop:liso-jacquet2}, to $J_{P_{12}}\left[\Hom_{\mathfrak{d}_1}(W, Y)\right]$. We may now apply proposition \ref{prop:comparison1} to the $M_1$-representation $\Hom_{\mathfrak{d}_1}(W, Y) = \Hom_{\mathfrak{d}_1}(W, J_{P_1} V)$, to deduce that there is a direct sum decomposition
\[ \Hom_{U}(W, J_{P_1} V) = \Hom_{U}(W, J_{P_1} V)_{Z_2-{\rm fs}} \oplus \Hom_{U}(W, J_{P_1} V)_{Z_2-{\rm null}}\]
and the first direct summand is isomorphic as a $Z_2$-representation to a closed subspace of
\[\Hom_{U \cap M_2'}(W^N, J_{P_{12}} Y) \subseteq \Hom_{U \cap D_2}(W^N, J_{P_{2}} V).\]
Dualising, we obtain the stated relation between the sheaves $\mathcal{F}(\dots)$.
\end{proof}

\section{Application to completed cohomology}

\subsection{Construction of eigenvarieties}

Let us now fix a number field $F$, a connected reductive group $\mathfrak{G}$ over $F$, and a prime $\p$ of $F$ above $p$. Let $\mathcal{G} = \mathfrak{G} \times_F F_\p$, a reductive group over $F_\mathfrak{p}$, and $G = \mathcal{G}(F_\p)$. Let us choose a parabolic subgroup $\mathcal{P}$ of $\mathcal{G}$ (not necessarily arising from a parabolic subgroup of $\mathfrak{G}$), and set $P = \mathcal{P}(F_\p)$, and similarly for $M, N, D, Z$ as above. We suppose our base field $L$ is a subfield of $F_\p$, so $G, P, M, N, D, Z$ are locally $L$-analytic groups. 

We recall from \cite[2.2.16]{emerton-memoir} the construction of the completed cohomology spaces $\tilde H^i$ for each cohomological degree $i \ge 0$, which are unitary admissible Banach representations of $(G, \mathfrak{G}(\mathbb{A}_f^\p) \times \pi_0)$, where $\pi_0$ is the group of components of $\mathfrak{G}(F \otimes_\mathbb{Q} \mathbb{R})$. The following is immediate from the above:

\begin{proposition} Let $\Gamma = D \times \mathfrak{G}(\mathbb{A}_f^\p) \times \pi_0$. For any $i \ge 0$, we have:
\begin{enumerate}
\item For any $W \in \Rep_{\rm cts,fd}(M)$, the space 
\[\Hom_{D-{\rm st.sm}}(W, \Ord_P \tilde H^i)\]
is an admissible continuous $(Z, \Gamma)$-representation.
\item For any $W \in \Rep_{\rm la,fd}(M)$, the space
\[ \Hom_{\mathfrak{d}}(W, J_P \tilde H^i_{\rm la})\]
is an essentially admissible locally $L$-analytic $(Z, \Gamma)$-representation.
\end{enumerate}
\end{proposition}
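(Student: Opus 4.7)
The plan is to reduce both parts to Proposition~\ref{prop:weak-jacquet}, taking the role of the auxiliary group $\Gamma$ in that proposition to be $\Gamma_0 := \mathfrak{G}(\mathbb{A}_f^\p) \times \pi_0$; then $D \times \Gamma_0$ coincides with the group $\Gamma$ of the present statement. All of the substantive work has been carried out in the preceding sections and in Emerton's memoir, so only the verification of the input hypotheses to Proposition~\ref{prop:weak-jacquet} remains.

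For part~(1), I would cite \cite[2.2.16]{emerton-memoir}, which realises $\tilde H^i$ as a unitary admissible Banach $(G, \Gamma_0)$-representation in the sense of the definition following Theorem~\ref{thm:invariants-generic}. Proposition~\ref{prop:weak-jacquet}(i), applied to $V = \tilde H^i$, then immediately yields that $\Hom_{D-{\rm st.sm}}(W, \Ord_P \tilde H^i)$ lies in $\Rep_{\rm cts,ad}(Z, D \times \Gamma_0) = \Rep_{\rm cts,ad}(Z, \Gamma)$, as required.

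For part~(2), it suffices to verify that $\tilde H^i_{\rm la} \in \Rep_{\rm ess}(G, \Gamma_0)$; Proposition~\ref{prop:weak-jacquet}(ii), applied with $V = \tilde H^i_{\rm la}$, will then give the conclusion, including the identification $\Hom_{D-{\rm st.sm}}(W, -) = \Hom_{\mathfrak{d}}(W, -)$ built into that statement. For any open compact $U \subseteq \Gamma_0$, the space $(\tilde H^i)^U$ is an admissible Banach $G$-representation by the $(G, \Gamma_0)$-admissibility of $\tilde H^i$; passage to locally analytic vectors preserves admissibility \cite[7.1]{ST-admissible}, and since $U$ acts continuously and commutes with $G$, one has $(\tilde H^i_{\rm la})^U = ((\tilde H^i)^U)_{\rm la}$. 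This identifies $(\tilde H^i_{\rm la})^U$ as an admissible locally analytic, hence essentially admissible, $G$-representation. Strict $\Gamma_0$-smoothness of $\tilde H^i_{\rm la}$ is inherited from that of $\tilde H^i$ (locally analytic vectors of a strictly smooth $\Gamma_0$-representation remain strictly smooth), so $\tilde H^i_{\rm la} \in \Rep_{\rm ess}(G, \Gamma_0)$ as claimed.

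There is no genuine obstacle here: the proposition is a formal global specialization of Proposition~\ref{prop:weak-jacquet} to the completed cohomology setting. The only mildly delicate point is the interchange of locally analytic vectors with tame-level invariants used in part~(2), but this is straightforward once one notes that tame-level invariants is an exact functor on Banach $G$-representations and commutes with the analytic structure on $G$.
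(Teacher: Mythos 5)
Your proposal is correct and follows exactly the route the paper intends: the paper simply declares the proposition ``immediate'' from the fact (cited from \cite[2.2.16]{emerton-memoir}) that $\tilde H^i$ is a unitary admissible Banach $(G, \mathfrak{G}(\mathbb{A}_f^\p)\times\pi_0)$-representation together with Proposition \ref{prop:weak-jacquet}. Your additional verification that $\tilde H^i_{\rm la}$ lies in $\Rep_{\rm ess}(G,\Gamma_0)$ (via preservation of admissibility under passage to locally analytic vectors and its compatibility with tame-level invariants) is exactly the detail the paper elides.
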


Let us fix an open compact subgroup $U \subseteq \Gamma$ (this is the most natural notion of a ``tame level'' in this context). Then we can use the above result to define an eigenvariety of tame level $U$, closely following \cite[\S 2.3]{emerton-interpolation}.

Let $v$ be a (finite or infinite) prime of $S$. We set 
\[ \Gamma_v =\begin{cases}  
             \mathfrak{G}(F_v) & \text{if $v \nmid \infty$ and $v \ne \p$} \\
             D & \text{if $v = \p$}\\
             \pi_0(\mathfrak{G}(F_v)) & \text{if $v \mid \infty$}.
             \end{cases}
\]
Then $\Gamma = \prod'_{v} \Gamma_v$. Let us set $U_v = U \cap \Gamma_v$. We say $v$ is {\it unramified} (for $U$) if $v$ is finite, $v \ne \p$, and $U_v$ is a hyperspecial maximal compact subgroup of $\Gamma_v$. Let $S$ be the (clearly finite) set of ramified primes, and $\Gamma^S = \prod_{v \notin S} \Gamma_v$, $\Gamma_S = \prod_{v \in S} \Gamma_v$.

It is easy to see that $U = U_S \times U^S$, where $U^S = U \cap \Gamma^S$ and similarly $U_S = U \cap \Gamma_S$, and hence we have a tensor product decomposition of Hecke algebras
\[ \HH(\Gamma // U) = \HH(\Gamma_S // U_S) \otimes \HH(\Gamma^S // U^S) =: \HH^{\rm ram} \otimes \HH^{\rm sph}.\]
As is well known, the algebra $\HH^{\rm sph}$ is commutative (but not finitely generated over $K$), while $\HH^{\rm ram}$ is finitely generated (but not commutative in general). 

By construction, $\HH(\Gamma // U)$ acts on the essentially admissible $Z$-representation $\Hom_{U}(W, J_P \tilde H^i_{\rm la})$, and hence it also acts on the corresponding sheaf $\mathcal{F}(i, P, W, U)$ on $\widehat Z$.

\begin{definition}
Let $\mathcal{E}(i, P, W, U)$ be the relative spectrum $\operatorname{\underline{Spec}} \mathcal{A}$, where $\mathcal{A}$ is the $\mathcal{O}_{\widehat Z}$-subsheaf of $\operatorname{\underline{End}} \mathcal{F}(i, P, W, U)$ generated by the image of $\HH^{\rm sph}$.
\end{definition}

For the definition of the relative spectrum, see \cite[Thm 2.2.5]{conrad06}. By definition $\mathcal{E}(i, P, W, U)$ is a rigid space over $K$, endowed with a finite morphism $\pi: \mathcal{E}(i, P, W, U) \to \widehat Z$ and an isomorphism of sheaves of $\mathcal{O}_{\widehat Z}$-algebras $\mathcal{A} \cong \pi_* \mathcal{O}_{\mathcal{E}(i, P, W, U)}$. Consequently, $\mathcal{F}(i, P, W, U)$ lifts to a sheaf $\overline{\mathcal{F}}(i, P, W, U)$ on $\mathcal{E}(i, P, W, U)$. 

We can regard $\mathcal{E}(i, P, W, U)$ as a subvariety of $\widehat Z_K \times \Spec \HH^{\rm sph}$ (although the latter will not be a rigid space if $\mathfrak{G}$ is not the trivial group); in particular, a $K$-point of $\mathcal{E}(i, P, W, U)$ gives rise to a homomorphism $\lambda: \HH^{\rm sph} \to K$.

We record the following properties of this construction, which are precisely analogous to \cite[2.3.3]{emerton-interpolation}:

\begin{theorem}\label{thm:maintheorem}~
\begin{enumerate}
\item The natural projection $\mathcal{E}(i, P, W, U) \to \mathfrak{z}'$ has discrete fibres. In particular, the dimension of $\mathcal{E}(i, P, W, U)$ is at most equal to the dimension of $Z$.
\item The action of $\HH^{\rm ram}$ on $\mathcal{F}(i, P, W, U)$ lifts to an action on $\overline{\mathcal{F}}(i, P, W, U)$, and the fibre of $\overline{\mathcal{F}}(i, P, W, U)$ at a point $(\chi, \lambda) \in \widehat Z \times \Spec \HH^{\rm sph}$ is isomorphic as a right $\HH^{\rm ram}$-module to the dual of the $(Z = \chi, \HH^{\rm sph} = \lambda)$-eigenspace of $\Hom_U(W, J_P \tilde H^i_{\rm la})$. In particular, the point $(\chi, \lambda)$ lies in $\mathcal{E}(i, P, W, U)$ if and only if this eigenspace is non-zero.
\item If there is a compact open subgroup $G_0 \subseteq G$ such that $(\tilde H^i_{\rm la})^{U^{(\p)}}$ is isomorphic as a $G_0$-representation to a finite direct sum of copies of $\Cla(G_0)$ (where $U^{(\p)} = U \cap \mathfrak{G}(\mathbb{A}_f^\p)$), then $\mathcal{E}(i, P, W, U)$ is equidimensional, of dimension equal to the rank of $Z$.
\end{enumerate}
\end{theorem}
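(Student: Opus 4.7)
The strategy is to reduce all three parts to the local statements of Theorem \ref{thm:maintheorem-local}, applied to the admissible locally analytic $G$-representation $V := (\tilde H^i_{\rm la})^{U^{(\p)}}$, where $U^{(\p)} = U \cap \mathfrak{G}(\mathbb{A}_f^\p)$. First I would verify that $V \in \Rep_{\rm la,ad}(G)$, which is immediate from Theorem \ref{thm:invariants-generic} since $\tilde H^i_{\rm la}$ is an admissible locally analytic representation of $(G, \mathfrak{G}(\mathbb{A}_f^\p) \times \pi_0)$. Because the functors $(-)^{N_0}$ and $(-)_{\rm fs}$ that assemble $J_P$ commute with $U^{(\p)}$-invariants, we obtain an identification
\[\Hom_U(W, J_P \tilde H^i_{\rm la}) = \Hom_{U_\p}(W, J_P V),\]
with $U_\p = U \cap D$. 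Thus the sheaf $\mathcal{F}(i, P, W, U)$ on $\widehat Z$ agrees with the local sheaf $\mathcal{F}(V, P, W, U_\p)$ of Theorem \ref{thm:maintheorem-local}, and it carries commuting right actions of $\HH^{\rm ram}$ and $\HH^{\rm sph}$ via the tensor factorization $\HH(\Gamma // U) = \HH^{\rm ram} \otimes \HH^{\rm sph}$.

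Part (1) then follows from Theorem \ref{thm:maintheorem-local}(2): the support of $\mathcal{F}(i, P, W, U)$ in $\widehat Z$ has discrete fibres over $\mathfrak{z}'$, and since $\mathcal{E}(i, P, W, U) \to \widehat Z$ is finite by construction of the relative spectrum and factors through this support, the composite to $\mathfrak{z}'$ also has discrete fibres. Part (3) is the same argument using Theorem \ref{thm:maintheorem-local}(3): the hypothesis on $(\tilde H^i_{\rm la})^{U^{(\p)}}$ matches the hypothesis of that theorem with $H = G_0$, so the support of $\mathcal{F}$ is equidimensional of dimension $\dim Z$, a property that transfers across the finite morphism $\mathcal{E} \to \operatorname{support} \mathcal{F}$.

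For part (2), the commutativity of the $\HH^{\rm ram}$- and $\HH^{\rm sph}$-actions on $\mathcal{F}(i, P, W, U)$ implies that $\HH^{\rm ram}$ preserves the $\mathcal{O}_{\widehat Z}$-subalgebra $\mathcal{A}$ generated by $\HH^{\rm sph}$, and therefore lifts canonically to a right action on $\overline{\mathcal{F}}(i, P, W, U)$ compatible with the $\mathcal{O}_{\mathcal{E}}$-structure. The fibre description combines Theorem \ref{thm:maintheorem-local}(1), which identifies the fibre of $\mathcal{F}$ at $\chi \in \widehat Z$ with the dual of the $(Z = \chi)$-eigenspace as a right $\HH(\Gamma // U)$-module, with the defining property of the relative spectrum, which refines this decomposition along the $\HH^{\rm sph}$-eigenvalue $\lambda$. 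The one point requiring genuine care is tracking the compatibility of Hecke algebra actions through each functorial step (invariants, $J_P$, relative spectrum); because every step preserves module structures over commuting subalgebras, this is essentially bookkeeping rather than substantive work, and no analytic obstacles arise beyond those already handled in Theorem \ref{thm:maintheorem-local}.
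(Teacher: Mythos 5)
Your proposal follows the route the paper intends: Theorem \ref{thm:maintheorem} is recorded as a formal consequence of the local results (Theorem \ref{thm:maintheorem-local} and the two theorems of Section 4, applied to the invariants of $\tilde H^i_{\rm la}$ under a tame level away from $\p$) together with the standard properties of the relative spectrum, exactly as in \cite[2.3.3]{emerton-interpolation}, and that is what you do. One step is stated too strongly, however: the identification $\Hom_U(W, J_P \tilde H^i_{\rm la}) = \Hom_{U_\p}(W, J_P V)$ with $V = (\tilde H^i_{\rm la})^{U^{(\p)}}$ presupposes that $U$ factors as $U_\p \times U^{(\p)}$, which is not part of the hypotheses here --- note that Theorem \ref{thm:comparison} has to impose precisely this factorisation as an additional assumption, so it cannot be automatic. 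For a general open compact $U \subseteq \Gamma$ you should instead choose a product subgroup $U' = U'_\p \times U'^{(\p)} \subseteq U$; averaging over $U/U'$ exhibits $\Hom_U(W, J_P \tilde H^i_{\rm la})$ as a direct summand of $\Hom_{U'}(W, J_P \tilde H^i_{\rm la})$, so the support of the corresponding sheaf on $\widehat Z$ is a union of irreducible components of the support for $U'$, and both discreteness of fibres over $\mathfrak{z}'$ and equidimensionality are inherited from the case treated by Theorem \ref{thm:maintheorem-local}. With that repair, parts (1) and (3) go through as you describe, and your treatment of (2) --- the lift of the $\HH^{\rm ram}$-action coming from its commuting with $\HH^{\rm sph}$, hence its $\mathcal{A}$-linearity, and the fibre description coming from decomposing the Artinian fibre of $\mathcal{A}$ at $\chi$ along the finitely many points $\lambda$ above $\chi$ --- is the intended bookkeeping.
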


\begin{remark} The hypothesis in the last point above is always satisfied when $i = 0$ and $\mathfrak{G}(F \otimes \mathbb{R})$ is compact, since for any open compact subgroup $U^{(\p)} \subseteq \mathfrak{G}(\mathbb{A}_f^\p)$, the image of $G(F) \cap U^{(\p)}$ in $G$ is a discrete cocompact subgroup $\Lambda$, and the $U^{(\p)}$-invariants $\tilde H^0(U^{(\p)})$ are isomorphic as a representation of $G$ and as a $\HH(U^{(\p)})$-module to $C(\Lambda \backslash G)$. This case is considered extensively in an earlier publication of the second author \cite{DL-algebraic}.
\end{remark}

Now let us suppose $G$ is split over $K$, and fix an irreducible (and therefore absolutely irreducible) algebraic representation $W$ of $M$. We let $\Pi(P, W, U)$ denote the set of irreducible smooth $\mathfrak{G}(\mathbb{A}_f) \times \pi_0$-representations $\pi_f$ such that $J_P(\pi_f)^U \ne 0$, and $\pi_f$ appears as a subquotient of the cohomology space $H^i(\mathcal{V}_{X})$ of \cite[\S 2.2]{emerton-interpolation} for some irreducible algebraic representation $X$ of $G$ with $(X')^N \cong W \otimes \chi$ for some character $\chi$. To any such $\pi_f$, we may associate the point $(\theta \chi, \lambda) \in \widehat Z \times \Spec \HH^{\rm sph}$, where $\theta$ is the smooth character by which $Z$ acts on $J_P(\pi_p)$, and $\lambda$ the character by which $\HH^{\rm sph}$ acts on $J_P(\pi_f)^U$. Let $E(i, P, W, U)_{\rm cl}$ denote the set of points of $\widehat Z \times \Spec \HH^{\rm sph}$ obtained in this way from representations $\pi_f \in \Pi(i, P, W, U)$.

\begin{corollary}\label{corr:maincorr}
 If the map \eqref{eq:edgemap} is an isomorphism for all irreducible algebraic representations $X$ such that $(X')^N$ is a twist of $W$, then $E(i, P, W, U)_{\rm cl} \subseteq \mathcal{E}(i, P, W, U)$. In particular, the Zariski closure of $E(i, P, W, U)_{\rm cl}$ has dimension at most $\dim Z$.
\end{corollary}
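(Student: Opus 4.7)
The plan is to show that each $\pi_f \in \Pi(i, P, W, U)$ gives rise to a nonzero class in the $(Z = \theta\chi, \HH^{\rm sph} = \lambda)$-eigenspace of $\Hom_U(W, J_P \tilde H^i_{\rm la})$; Theorem \ref{thm:maintheorem}(2) then places the associated point in $\mathcal{E}(i, P, W, U)$. The ``in particular'' statement is immediate from Theorem \ref{thm:maintheorem}(1), which already bounds the dimension of $\mathcal{E}(i, P, W, U)$ by $\dim Z$.

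First I would invoke the hypothesis on the edge map. Since $\pi_f$ is a subquotient of $H^i(\mathcal{V}_X)$ for an irreducible algebraic representation $X$ with $(X')^N \cong W \otimes \chi$, and \eqref{eq:edgemap} is assumed to be an isomorphism for every such $X$, we may identify $H^i(\mathcal{V}_X)$ (after passage to the direct limit over tame levels $K^{(\p)}$) with the smooth $\mathfrak{G}(\mathbb{A}_f) \times \pi_0$-representation $\Hom_{\mathfrak{g}}(X', \tilde H^i_{\rm la})$. In particular, $\pi_f$ appears as a subquotient of the latter, and so $\pi_\p$ appears as a $G$-subquotient on the $\p$-component.

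The heart of the argument is a natural $M$-equivariant injection
\[ J_P\bigl(\Hom_{\mathfrak{g}}(X', V)\bigr) \hookrightarrow \Hom_{\mathfrak{m}}\bigl((X')^N, J_P(V)\bigr), \]
valid for any essentially admissible locally analytic $G$-representation $V$. It should be constructed by restricting a $\mathfrak{g}$-equivariant, $N_0$-smooth, finite-slope map $X' \to V$ to the subspace $(X')^N$, whose image automatically lies in $V^{\mathfrak{n}}$ and in fact in $J_P(V)$ after applying the finite-slope-part functor; injectivity will follow from the fact that $X'$ is generated as a $\mathfrak{g}$-module by its $N$-invariants. Plugging in $(X')^N \cong W \otimes \chi$ and restricting the $M$-action to $D \subseteq M$ identifies the target with a twisted piece of $\Hom_U(W, J_P V)$ in which the $Z$-action is shifted by $\chi$: a $Z$-eigenvector of smooth weight $\theta$ on the left will correspond to a vector in $\Hom_U(W, J_P V)$ on which $Z$ acts by $\theta\chi$. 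Taking $V = \tilde H^i_{\rm la}$, applying the (exact) classical Jacquet functor to the subquotient relation $\pi_\p \subseteq \Hom_{\mathfrak{g}}(X', \tilde H^i_{\rm la})$, and exploiting the nonvanishing of $J_P(\pi_f)^U$ (on which $\HH^{\rm sph}$ acts by $\lambda$) then produces the required nonzero class.

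The main obstacle will be establishing the displayed injection and, in particular, matching the $Z$-eigencharacter precisely with $\theta\chi$. This parallels the analysis of locally algebraic vectors carried out for Borel subgroups in \cite[\S 4.3]{emerton-jacquet}; the essential ingredients are the compatibility of $(-)^{N_0}$ and the finite-slope-part functor with restriction along $(X')^N \hookrightarrow X'$, together with careful bookkeeping of central characters under the near-splitting $M \approx Z \times D$. Granted that injection, everything else in the corollary is formal, since compatibility with the $\HH^{\rm sph}$-action is built into the construction of $\mathcal{E}(i, P, W, U)$ as a relative Spec.
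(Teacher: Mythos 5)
Your argument is correct and follows essentially the same route as the paper: your displayed injection is a Hom-side reformulation of Proposition \ref{prop:liso-jacquet2}, and the remaining ingredients — exactness of the classical Jacquet functor on smooth representations, and the shift of the $Z$-character by $\chi$ coming from $(X')^N \cong W \otimes \chi$ — are exactly those the paper uses. The paper phrases the same computation on the tensor side, embedding $\pi_f \otimes X'$ into $\tilde H^i_{\rm la}$ and using $J_P(\pi_f \otimes X') = J_P(\pi_f) \otimes_K (X')^N$ before applying $\Hom_{\mathfrak{d}}(W,-)$, but via the equivalence of Proposition \ref{prop:liso-functors} this is the same argument.
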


\begin{proof}
Let $\pi_f \in \Pi(i, P, W, U)$. Then the locally algebraic $(G, \mathfrak{G}(\mathbb{A}_f^{(\p)}) \times \pi_0)$-represen\-tation $\pi_f \otimes X'$ appears in $H^i(\mathcal{V}_X) \otimes_K X'$. By assumption, the latter embeds as a closed subrepresentation of $\widetilde H^i_{\rm la}$. The Jacquet functor is exact restricted to locally $X'$-algebraic representations (since this is so for smooth representations). Moreover, the functor $\Hom_{\mathfrak{d}}(W, -)$ is exact restricted to locally $W$-algebraic representations, so $\Hom_{\mathfrak{d}}\left[W, J_P(\pi_f) \otimes_K (X')^N\right]$ appears as a subquotient of $\Hom_{\mathfrak{d}}\left[W, J_P(\tilde H^i_{\rm la})\right]$. Since $(X')^N = W \otimes \chi$, the former space is simply $J_P(\pi_f) \otimes_K \chi$, so the point $(\theta \chi, \lambda)$ appears in $\mathcal{E}(i, P, W, U)$ as required.
\end{proof}

\begin{remarks}
\begin{enumerate}
 \item The entire construction can also be carried out with the spaces $\tilde H^i$ replaced by the compactly supported versions $\tilde H^i_{\rm c}$ or the parabolic versions $\tilde H^i_{\rm par}$; we then obtain analogues of the above proposition for the compactly supported or parabolic cohomology of the arithmetic quotients.
 \item It suffices to check that the map \eqref{eq:edgemap} is an isomorphism for $L = \mathbb{Q}_p$. This is known to hold in many cases, e.g. in degree $i = 0$ for any $\mathfrak{G}$, and in degree 1 for ${\rm GL}_2(\mathbb{Q})$ (as shown in \cite{emerton-interpolation}) or for a semisimple and simply connected group (as shown by the first author in \cite{hill-construction}). The ``weak Emerton criterion'' of \cite[defn.~2]{hill-construction} suffices to prove corollary \ref{corr:maincorr} when $W$ is not a character; this is known in many more cases, e.g. when $i = 2$ and the congruence kernel of $\mathfrak{G}$ is finite. When $W$ is a character $\chi: M \to \mathbb{G}_m$, the weak Emerton criterion implies that the points $E(i, P, W, U)_{\rm cl}$ are contained in the union of $\mathcal{E}(i, P, W, U)$ and the single point $(\chi^{-1}, 1)$.
\end{enumerate}
\end{remarks}

\begin{theorem}\label{thm:comparison}
 Suppose $P_1 \supseteq P_2$ are two parabolics, and $U = U^{(\p)} \times U_{\p}$, where $U^{(\p)} \subseteq \mathfrak{G}(\mathbb{A}_f^{(\p)}) \times \pi_0$ and $U_{\p} \subseteq D_1$ is decomposed with respect to $P_{2} \cap D_1$. Then $\mathcal{E}(i, P_1, W, U)$ is equal to a union of two closed subvarieties
\[ \mathcal{E}(i, P_1, W, U)_{P_2-{\rm fs}} \cap \mathcal{E}(i, P_1, W, U)_{P_2-{\rm null}},\]
which are respectively equipped with sheaves of $\HH^{\rm ram}$-modules $\overline{\mathcal{F}}(i, P, W, U)_{P_2-{\rm fs}}$ and $\overline{\mathcal{F}}(i, P, W, U)_{P_2-{\rm null}}$ whose direct sum is $\overline{\mathcal{F}}(i, P, W, U)$.

The element of $\HH^{\rm ram}$ corresponding to any strictly positive element of $Z_2$ acts invertibly on $\overline{\mathcal{F}}(i, P, W, U)_{P_2-{\rm fs}}$ and nilpotently on $\overline{\mathcal{F}}(i, P, W, U)_{P_2-{\rm null}}$; and there is a subvariety of $\mathcal{E}(i, P_2, W^{N_{12}}, U \cap D_2)$ whose image in $\widehat Z_1 \times \Spec \HH^{\rm sph}$ coincides with $\mathcal{E}(i, P_1, W, U)_{P_2-{\rm fs}}$.
\end{theorem}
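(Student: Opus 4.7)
The plan is to derive Theorem \ref{thm:comparison} as the global counterpart of the sheaf-theoretic decomposition of Theorem \ref{thm:maintheorem-local}(4), applied to $V = \tilde H^i_{\rm la}$. Specifically, I would invoke that result with $V = \tilde H^i_{\rm la}$, the parabolics $P_1 \supseteq P_2$, the $M_1$-representation $W$, and the open compact subgroup $U_\p \subseteq D_1$; the hypothesis on $U_\p$ is precisely what is demanded. This yields a direct sum decomposition
\[ \mathcal{F}(i, P_1, W, U) = \mathcal{F}(i, P_1, W, U)_{Z_2-\mathrm{fs}} \oplus \mathcal{F}(i, P_1, W, U)_{Z_2-\mathrm{null}} \]
of coherent sheaves on $\widehat{Z_1}$. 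The idempotent producing this splitting arises, via Proposition \ref{prop:comparison1} and Bushnell's localization theorem, from the action of $z^n$ for a strictly positive $z \in Z_2$ and all sufficiently large $n$. Since this operator lives at $\p$ while $\HH^{\rm sph}$ is supported at places away from $\p$, and since $z$ commutes with the remaining components of $\HH^{\rm ram}$ (being essentially a Bernstein-center element at $\p$), the decomposition is $\HH(\Gamma // U)$-equivariant.

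Next, I would lift this splitting to the eigenvariety. The sheaf of $\mathcal{O}_{\widehat{Z_1}}$-subalgebras of $\operatorname{\underline{End}} \mathcal{F}(i, P_1, W, U)$ generated by the image of $\HH^{\rm sph}$ respects the direct sum decomposition, so it itself splits as a product; taking relative Spec yields the desired decomposition of $\mathcal{E}(i, P_1, W, U)$ as a disjoint union of closed subvarieties $\mathcal{E}(i, P_1, W, U)_{P_2-\mathrm{fs}}$ and $\mathcal{E}(i, P_1, W, U)_{P_2-\mathrm{null}}$, with $\overline{\mathcal{F}}(i, P_1, W, U)$ splitting accordingly. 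The action of a strictly positive $z \in Z_2$ is then immediate from Proposition \ref{prop:comparison1}: $z^n$ is a topological isomorphism on the fs factor (so $z$ is invertible there) and zero on the null factor (so $z$ is nilpotent there).

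For the final comparison with $\mathcal{E}(i, P_2, W^{N_{12}}, U \cap D_2)$, I would use the $Z_2$-equivariant closed embedding
\[ \Hom_U(W, J_{P_1} V)_{Z_2-\mathrm{fs}} \hookrightarrow \Hom_{U \cap D_2}(W^{N_{12}}, J_{P_2} V) \]
constructed inside the proof of Theorem \ref{thm:maintheorem-local}(4). Dualizing this gives an $\HH^{\rm sph}$-equivariant surjection of coherent sheaves on $\widehat{Z_2}$
\[ \mathcal{F}(i, P_2, W^{N_{12}}, U \cap D_2) \twoheadrightarrow \tilde{\mathcal{F}}, \]
where $\tilde{\mathcal{F}}$ is the coherent sheaf on $\widehat{Z_2}$ whose pushforward along $\hat\pi: \widehat{Z_2} \to \widehat{Z_1}$ (the character-variety map induced by $Z_1 \hookrightarrow Z_2$) recovers $\mathcal{F}(i, P_1, W, U)_{Z_2-\mathrm{fs}}$. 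The $\operatorname{\underline{Spec}}$-support of $\tilde{\mathcal{F}}$ is then a closed subvariety of $\mathcal{E}(i, P_2, W^{N_{12}}, U \cap D_2)$, and by the $\HH^{\rm sph}$-equivariance and surjectivity of the quotient map its image in $\widehat{Z_1} \times \Spec \HH^{\rm sph}$ is exactly $\mathcal{E}(i, P_1, W, U)_{P_2-\mathrm{fs}}$, as required.

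The principal obstacle I anticipate is verifying the full $\HH^{\rm ram}_\p$-equivariance of the decomposition at $\p$: the idempotent is built from $z \in Z_2$, which need not lie in $D_1$, so one must confirm that the resulting endomorphism of the $U_\p$-invariants commutes with every double-coset operator in $\HH(D_1 // U_\p)$. This should follow from viewing $z$ as acting through a Bernstein-center-like subalgebra, but requires care; a secondary point is checking the rigid-analytic compatibility of the relative Spec construction with the non-finite map $\hat\pi$.
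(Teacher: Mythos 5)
Your proposal follows essentially the same route as the paper's own proof: apply Theorem \ref{thm:maintheorem-local}(4) to $V=\tilde H^i_{\rm la}$, observe that the resulting fs/null splitting of $\mathcal{F}(i,P_1,W,U)$ is Hecke-equivariant (the paper simply says the decomposition is functorial), pass to relative spectra, and realise the fs part as the pushforward to $\widehat Z_1$ of a quotient sheaf of $\mathcal{F}(i,P_2,W^{N_{12}},U\cap D_2)$ on $\widehat Z_2$ --- your $\tilde{\mathcal{F}}$ is the paper's $\mathcal{Q}$. The only caveat is that your phrase ``disjoint union'' claims slightly more than the theorem (and your argument) delivers, since the splitting idempotent comes from $z\in Z_2$ rather than from $\HH^{\rm sph}$ and so need not lie in the subalgebra $\mathcal{A}$; but the statement only asserts a union, so this does not affect correctness.
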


\begin{proof}
 By theorem \ref{thm:maintheorem-local}, we may decompose $\mathcal{F}(i, P_1, W, U)$ as a direct sum of a null part and a finite slope part; this decomposition is clearly functorial, and hence it is preserved by the action of the Hecke algebra $\HH^{\rm sph}$, so we may define the spaces $\mathcal{E}(i, P_1, W, U)_{P_2-{\rm fs}}$ and $\mathcal{E}(i, P_1, W, U)_{P_2-{\rm null}}$ to be the relative spectra of the Hecke algebra acting on the two summands. 

For the final statement, we note that there is a quotient $\mathcal{Q}$ of $\mathcal{F}(i, P_2, W^{N_{12}}, U \cap D_2)$, corresponding to the $Z_2$-subrepresentation 
\[ J_{P_{12}}\left(\Hom_{\mathfrak{d}_1}(W, J_{P_1} \widetilde H^i_{\rm la})\right)^{U \cap M_2} \subseteq \Hom_{\mathfrak{d}_2}(W^{N_{12}}, J_{P_2} \widetilde H^i_{\rm la})^{U \cap D_2}\]
 such that the pushforward of $\mathcal{Q}$ to $\widehat Z_1$ is isomorphic to $\mathcal{F}(i, P_1, W, U)_{P_2-{\rm fs}}$. This isomorphism clearly commutes with the action of $\HH^{\rm sph}$ on both sides, from which the result follows.
\end{proof}

\section{Acknowledgements}

The second author would like to thank Kevin Buzzard and Matthew Emerton for illuminating conversations; the participants in the autum 2009 ``$P$-adic Analysis'' study group at DPMMS, University of Cambridge, for their assistance in learning the necessary theory; and Sarah for her love and support.

\def\cprime{$'$}
\providecommand{\bysame}{\leavevmode\hbox to3em{\hrulefill}\thinspace}
\renewcommand{\MRhref}[2]{\href{http://www.ams.org/mathscinet-getitem?mr=#1}{#2}}

\end{document}